\newtheorem{theorem}{Theorem}
\newtheorem{proposition}[theorem]{Proposition}
\newtheorem{corollary}[theorem]{Corollary}
\newtheorem{definition}[theorem]{Definition}
\newtheorem{lemma}[theorem]{Lemma}
\newtheorem*{claim*}{Claim}
\newcommand{\Z}{\mathbb{Z}}
\newcommand{\Q}{\mathbb{Q}}
\newcommand{\R}{\mathbb{R}}
\newcommand{\C}{\mathbb{C}}
\newcommand{\N}{\mathbb{N}}
\newcommand{\p}{\mathfrak{p}}
\newcommand{\re}{\mathrm{Re}}
\newcommand{\ie}{{\it{i.$\,$e.\ }}}
\newcommand{\Ok}{E_+}
\newcommand{\Log}{\mathrm{Log}}
\newcommand{\LOG}{\mathrm{LOG}}
\begin{document}

\title{Signed fundamental domains for totally real number  fields}

\author{Francisco Diaz y Diaz}
\email{Francisco.Diaz-y-diaz@math.u-bordeaux1.fr}
\address{Institut de Math\'ematiques de Bordeaux,  UMR 5251,
Universit\'e Bordeaux I, \newline 351, Cours de la Lib\'eration,
F-33405  Talence c\'edex, France.}

\author{Eduardo Friedman}
\email{friedman@uchile.cl}
\address{Departamento de Matem\'atica, Facultad de Ciencias,
 Universidad de Chile,\newline
Casilla 653, Santiago, Chile.}
\keywords{Shintani-Colmez cones,
 totally real fields, fundamental domain, units}
\subjclass[2010]{Primary  11R27, 11Y40, 11R42, Secondary 11R80}
\thanks{We are grateful for the generous support of Chilean
MIDEPLAN's Iniciativa   Cient\'{\i}fica Milenio
grant ICM P07-027-F and of  Chilean FONDECYT grant 1085153.}

\begin{abstract}
We give a  signed  fundamental  domain   for the action  on
$\R^n_+$ of the totally positive units $\Ok$ of a
totally real number field $k$ of degree $n$.
 The domain $\big\{(C_\sigma,w_\sigma)
 \big\}_\sigma$  is  signed
since the  net number of its intersections with any $E_+$-orbit is
1, \ie for any $x\in \R^n_+$,
$$
\sum_{ \sigma\in S_{n-1}}   \sum_{\varepsilon\in E_+}
w_\sigma\chi^{\phantom{1}}_{C_\sigma}(\varepsilon x )     =  1.
$$
Here   $\chi_{C_\sigma}$ is
the characteristic function of $C_\sigma$,   $w_\sigma=\pm1$
 is   a natural orientation of the  $n$-dimensional $k$-rational cone $C_\sigma\subset\R^n_+$,
and the inner sum is actually finite.

 Signed fundamental domains are as  useful as Shintani's true ones
 for the purpose of
calculating abelian $L$-functions.
They have the advantage of   being easily
constructed from any set of fundamental units, whereas in practice  there is no
algorithm producing Shintani's $k$-rational cones.

 Our proof  uses
algebraic topology   on the  quotient manifold $\R^n_+/\Ok$.
 The invariance of the topological degree under
homotopy allows us to control the deformation of a crooked
fundamental domain   into   nice straight cones.
 Crossings may occur during the homotopy,
   leading to the need to   subtract some cones.
\end{abstract}

\maketitle

\vspace*{6pt}\tableofcontents

\section{Introduction}\label{Introduction}

Explicit fundamental domains are hard to come by.
 In his 1976 work on special values of
abelian $L$-functions
 attached to a totally real number field $k$,  Shintani
 found a fundamental domain for the action of the totally positive
units $\Ok$ of $k$ on $\R_+^{[k:\Q]}$  \cite{Sh1}
 \cite[\S VII.9]{Neu}
consisting of a finite number of $k$-rational cones of varying
dimensions. Shintani's work was quite
influential but  suffered from a lack of control over the cones
involved. This differed from the
quadratic case, where a fundamental domain is easily described once
the fundamental unit is known.

For  totally real cubic fields
the situation is almost  as simple as for quadratic fields \cite{TV} (see also \cite{HP}  \cite{DF}).
In the general case, the best result is due
to Colmez \cite{Co1}\cite{Co2}. Given independent totally positive units
$\varepsilon_1,...,\varepsilon_{n-1}$, he defined $(n-1)!$ explicit  $k$-rational cones
$C_\sigma=C_\sigma(\varepsilon_1,...,\varepsilon_{n-1})$. If these units  satisfy certain geometric conditions,
 Colmez proved  that the union $\big\{ C_\sigma  \big\}_\sigma$ of his cones is
a fundamental domain for the action on $\R_+^n$ of the group
generated by the $\varepsilon_i$.\footnote{\ To be quite precise,
Colmez originally also needed somewhat less explicit lower dimensional
 cones along the boundary of the $C_\sigma$.
 Later, in unpublished lectures, he made the boundary components explicit (see \eqref{Csigma} below).}

 Colmez also proved the existence of special  units satisfying his conditions, but he gave no
 algorithm to find them, nor any upper bound on the index in $E_+$ of the
 subgroup generated by his  units. To remedy this ineffectiveness, we introduce ``signed" fundamental domains.

 When the   $\big\{ C_\sigma  \big\}_\sigma$ constitute a true fundamental domain,
 the  number of intersections of any orbit with the union of the
$C_\sigma $ is 1, \ie
$$
\sum_\sigma \sum_{\varepsilon\in E_+}\chi^{\phantom{1}}_{C_\sigma}(\varepsilon \cdot x )=1\qquad\qquad(x\in\R^n_+),
$$
where $\chi^{\phantom{1}}_{C_\sigma}$ is the characteristic function of $C_\sigma$.
In the case of a signed fundamental domain$\big\{ \big(C_\sigma ,w_\sigma\big) \big\}_\sigma$  we have
$$
\sum_\sigma w_\sigma\sum_{\varepsilon\in E_+}\chi^{\phantom{1}}_{C_\sigma}(\varepsilon  \cdot x )=1\qquad\qquad(x\in\R^n_+),
$$
where    $w_\sigma=\pm1$ is a sign assigned to each cone $C_\sigma$. In other words,
 the  net
 number  of intersections of any orbit with the $C_\sigma$  is 1.

 Using  algebraic topology we show, for {\it{any}} set of fundamental positive units, that there is a natural choice
of signs $w_\sigma=\pm1$ for which the  Colmez cones $\big\{ C_\sigma  \big\}_\sigma$ are
a signed fundamental domain. As a consequence we obtain Shintani-like formulas
for abelian $L$-functions without finding special units.

We now give a precise definition of   $w_\sigma$ and  $C_\sigma$. Here $\sigma$ runs over all
permutations of $\{1,2,\dots,n-1\}$.
  Let $\tau_i:k\to\R\,$ ($1\le i\le n)\,$
be a complete set of embeddings of   $k$, and regard $k\subset \R^n$
by identifying $x\in k$ with
$\big(x^{(1)},x^{(2)},\dots,x^{(n)}\big)\!\in\R^n$, where
$x^{(i)}=\tau_i(x)$. A unit $\varepsilon\in\Ok$ acts on
$x\in\R^n_+:=(0,\infty)^n$
 by component-wise multiplication,
$ (\varepsilon\cdot x)^{(i)}=\varepsilon^{(i)} x^{(i)}.$
We assume given independent totally positive units
$\varepsilon_1,...,\varepsilon_{n-1}$, and let $V\subset\Ok$ be the
subgroup   they   generate. To avoid  trivialities, assume $k\not=\Q$. After Colmez,  define
\begin{equation}\label{fisigma}
f_{i,\sigma} := \varepsilon_{\sigma(1)} \varepsilon_{\sigma(2)}\cdots\;
\varepsilon_{\sigma(i-1)}=\prod_{j=1}^{i-1}
 \varepsilon_{\sigma(j)}\qquad\ (1\le i\le n,
 \ \,\sigma\in S_{n-1},\ \,f_{i,\sigma}\in E_+\subset\R^n_+).
\end{equation}
For $i=1$ we mean
$f_{1,\sigma}:=1=(1,1,\dots,1)\in\R^n_+$.
Define   $w_\sigma=\pm1$ or 0 as
\begin{equation}\label{wsigma}
w_\sigma:=\frac{(-1)^{n-1}\mathrm{sgn}(\sigma)
\cdot\mathrm{sign}\big(\!\det(f_{1,\sigma},f_{2,\sigma},
\dots,f_{n,\sigma})\big)
}{\mathrm{sign}\big(\!\det(\Log\,\,\varepsilon_1,
\Log\,\,\varepsilon_2,\dots,\Log\,\,\varepsilon_{n-1})\big)},
\end{equation}
where  $\mathrm{sgn}(\sigma)$
 is the usual signature (\ie$\pm1$) of the
permutation $\sigma$,  $\, \Log\,\,\varepsilon_i\in \R^{n-1}$,
$ \big( \Log\,\,\varepsilon_i  \big)^{(j)}:= \log \, \varepsilon_i^{(j)}
\ \, (1\le j\le n-1)$,
and $\,\mathrm{sign}\big(\!\det(v_1,v_2,\dots,v_q)\big)$ is the
sign   of the determinant of the $q\times q$ matrix
 having columns $v_i$.
The determinant in the denominator of \eqref{wsigma} is the
``signed regulator" of the independent units
 $ \varepsilon_1,\varepsilon_2,\dots,\varepsilon_{n-1}$, and so non-zero.

 For  $\sigma\in S_{n-1}$ with $w_\sigma\not=0$,
  the closed cone $\overline{C}_\sigma:=
  \sum_{i=1}^n\R_{\ge0}\cdot f_{i,\sigma}\subset\R^n_+\cup\{0\}$ has a non-empty interior. Each
 bounding hyperplane
  $$
  H_{i,\sigma}:=\sum_{\substack{1\le j\le n\\ j\not=i}}\R\cdot f_{j,\sigma}\qquad\qquad \qquad \,(1\le i\le n,\ \, w_\sigma\not=0)
  $$
  separates $\R^n$ into   two  disjoint half-spaces,
  \begin{equation}\label{Hisigmaplus}\R^n=
  H_{i,\sigma}^+\cup H_{i,\sigma} \cup H_{i,\sigma}^-,
  \end{equation}
    where
  $H_{i,\sigma}^+$ is the half-space containing
  $f_{i,\sigma}$.\footnote{\ For $v\in\R^n$  we can easily compute whether
  $v\in H_{i,\sigma}^\pm$.
   On the right-hand side of
    \eqref{wsigma}   replace the single column $f_{i,\sigma}$
    by $v\in\R^n$ to obtain a function
 $v\to w_{i,\sigma}(v)$,   vanishing on
  $H_{i,\sigma}$ and    taking the    value   $ \pm w_\sigma$
    on $H_{i,\sigma}^\pm$. Alternatively,  if we write  $v=\sum_{i=1}^n c_i f_{i,\sigma}$,
    then $v\in H_{i,\sigma}^+$ if and only if $c_i>0$.\label{Hfootie}}
Fix   one of the $n$ standard basis vectors, say $e_n:=[0,0,
  \dots,0,1]\in\R^n$.
Following Colmez (unpublished lectures),
  define the   cone
 $C_\sigma$ to consist of all
    points $z\in\overline{C}_\sigma$
  for which the line segment from $e_n$
  to $z$ ``pierces"  $\overline{C}_\sigma$, \ie contains an interior point of $\overline{C}_\sigma$.
   Thus,    $C_\sigma$ consists
  of all points in the interior of $\overline{C}_\sigma$,
  together with some boundary pieces. Explicitly,
 \begin{align}\label{Csigma}
 C_\sigma= &\, C_\sigma (\varepsilon_1,\varepsilon_2,\dots,
 \varepsilon_{n-1}):=   \,\R_{1,\sigma}\cdot f_{1,\sigma}
 \,\,+\,\, \R_{2,\sigma}\cdot f_{2,\sigma}
 + \cdots \,+\,\, \R_{n,\sigma}\cdot f_{n,\sigma},
\\ \label{Risigma}
\R_{i,\sigma}=&\, \R_{i,\sigma}(\varepsilon_1,
\varepsilon_2,\dots,\varepsilon_{n-1})
 :=\begin{cases} [0,\infty) &\mathrm{ if }\ e_n\in H_{i,\sigma}^+,\\
                             (0,\infty) &\mathrm{ if }\ e_n\in H_{i,\sigma}^-,
                \end{cases}\qquad\qquad(1\le i\le n).
\end{align}
This makes sense since     $e_n$ lies in no  boundary hyperplane
  $H_{i,\sigma}$    (see Lemma \ref{envantage}).

\begin{theorem}\label{Main} Let $k$ be a totally
real number field of degree
$n\ge2$, and suppose $\varepsilon_1,\dots,\varepsilon_{n-1}$ generate a
subgroup $V$ of finite index in the group of totally positive units
  of k. Then the
signed cones $\big\{(C_\sigma, w_\sigma)\big\}_{ w_\sigma\not=0}$
defined in \eqref{wsigma} and \eqref{Csigma} give a signed fundamental domain for the
action  of $V$ on $\R^n_+:=(0,\infty)^n$. That is,
\begin{equation}\label{Basic}
\sum_{\substack{w_\sigma=+1\\ \sigma\in S_{n-1}}} \, \sum_{z\in
C_\sigma\cap V \cdot x } 1\ -\ \sum_{\substack{w_\sigma=-1\\
\sigma\in S_{n-1}}} \,
 \sum_{z\in C_\sigma\cap V \cdot x } 1 \ = \ 1\qquad\qquad\qquad\big( x\in \R^n_+\big),
\end{equation}
and all sums above are over finite sets of cardinality bounded independently of $x$.
\end{theorem}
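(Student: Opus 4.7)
The plan is to interpret the left-hand side of \eqref{Basic} as the signed local degree of a map from a disjoint union of simplices into the compact quotient manifold $M := S/V$, where $S := \{x \in \R^n_+ : \prod_{i=1}^n x^{(i)} = 1\}$, and then to show this degree equals $1$ by deforming the Colmez configuration continuously to one coming from a \emph{true} fundamental domain.

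First I would reduce to a compact setting. Each $C_\sigma$ is stable under $\R_+$-scaling and the scaling commutes with the $V$-action, so both sides of \eqref{Basic} depend only on the class of $x$ in the quotient of $S$ by $V$. Via $\LOG$, $S$ is identified with the trace-zero hyperplane in $\R^n$ and $V$ acts by translations through a full lattice, so $M$ is a compact orientable $(n-1)$-torus. For each $\sigma$ with $w_\sigma \ne 0$, the slice $\overline{C}_\sigma \cap S$ is homeomorphic to the standard $(n-1)$-simplex $\Delta$ via a barycentric parametrization $\psi_\sigma : \Delta \to S$; assembling these gives a map $\Psi := \bigsqcup_\sigma \pi \circ \psi_\sigma$ from $\bigsqcup_\sigma \Delta$ into $M$, where $\pi : S \to M$ is the quotient. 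The sign $w_\sigma$ in \eqref{wsigma} is designed to record the orientation of $\psi_\sigma$ relative to chosen orientations on $\Delta$ and $M$: the factor $\mathrm{sign}(\det(f_{1,\sigma},\dots,f_{n,\sigma}))$ captures the orientation of the simplicial cone spanned by the $f_{i,\sigma}$ in $\R^n$, the factor $(-1)^{n-1}\mathrm{sgn}(\sigma)$ absorbs the vertex reindexing, and the signed regulator in the denominator orients $M$ compatibly with $\LOG(V)$.

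Under this interpretation, \eqref{Basic} becomes the assertion that $\Psi$ has signed local degree $1$ at each regular value (each $y \in M$ not in the codimension-$\ge 1$ image $\bigcup_\sigma \pi \circ \psi_\sigma(\partial \Delta)$). Local constancy of the signed degree on the complement is immediate from degree theory. To identify the constant value, I would construct a continuous homotopy $\Psi_t$, $t \in [0,1]$, with $\Psi_0 = \Psi$ and $\Psi_1$ coming from a true fundamental domain for $V$. Concretely, one deforms the generators $\varepsilon_1,\dots,\varepsilon_{n-1}$ along a path in $\R^n_+$ (the problem is now purely topological, so no algebraic structure on the deformed ``units'' is required) to a position satisfying Colmez's geometric condition, for which the $C_\sigma$ tile $\R^n_+$ without overlap and thus yield a true fundamental domain. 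At the endpoint the signed degree is visibly $1$; homotopy invariance propagates this back to $\Psi_0$.

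The hardest part, I expect, is showing that the signed degree really is preserved throughout the homotopy. As the $f_{i,\sigma}$ move, pairs of cones can momentarily become coplanar and cross through one another, at which instant the unsigned count of preimages of a fixed $y \in M$ jumps by $2$; but the local model is always a pair of simplices with opposite orientations sliding past each other, so the \emph{signed} count stays constant --- this is precisely the balance enforced by the sign rule \eqref{wsigma}, and individual cones can even switch $w_\sigma$ from $+1$ to $-1$ when a generator $f_{i,\sigma}$ crosses the hyperplane spanned by the others. A secondary obstacle is extending \eqref{Basic} from generic values of $\pi(x)$ to every point of $\R^n_+$; here the inclusion rule \eqref{Risigma}, combined with the fact that $e_n$ lies on no bounding hyperplane $H_{i,\sigma}$ (Lemma~\ref{envantage}), guarantees that boundary points are counted exactly as a small perturbation of $x$ in the $e_n$-direction would count them, reducing the boundary case to the generic computation.
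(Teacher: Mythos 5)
Your plan follows the paper's overall strategy --- pass to a compact $(n-1)$-dimensional picture, interpret the signed count as a sum of local degrees of a map into a torus, and pin down the value by homotoping to a reference map --- but there are two genuine gaps.

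The first and more serious gap is the use of a \emph{disjoint union} of simplices $\bigsqcup_\sigma \Delta$ as the domain of $\Psi$. A disjoint union of simplices is a manifold with boundary, and for a map from a manifold with boundary into a closed $(n-1)$-manifold $M$, the sum of local degrees over preimages of $y$ is \emph{not} in general locally constant in $y$: the count jumps when $y$ crosses the image of the boundary. (A single embedded arc in a circle gives preimage count $1$ or $0$ depending on where $y$ is.) Your remark that ``local constancy of the signed degree on the complement is immediate'' is therefore false as stated, and your informal claim that crossings always occur as pairs of simplices with opposite orientation is exactly the non-trivial combinatorial fact that needs to be established. The paper handles this by proving (Proposition~\ref{f}) that the affine maps $A_\sigma$ agree on shared faces $D_\sigma \cap D_\tau$, so the simplices $D_\sigma$ \emph{glue} into the hypercube $I^{n-1}$ and the map $f$ is globally continuous; after identifying opposite faces the domain becomes a closed torus $\widehat{T}$, and the well-defined global degree of $F:\widehat{T}\to T$ (via Proposition~\ref{DegreeProps}\eqref{Localglobal}) is what makes the sum of local degrees constant. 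Without supplying this gluing (or an equivalent combinatorial cancellation argument), the proposal does not close.

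The second, lesser, issue is the choice of reference map. You propose deforming the units $\varepsilon_i$ along a path in $(\R^n_+)^{n-1}$ to a Colmez configuration. This is awkward because deforming the generators changes the lattice $\LOG(\widetilde V)$ and hence the target torus $T$, so the homotopy is not between maps into a fixed $M$; one would need to trivialize a family of tori, or pre-compose with varying linear identifications, and also to establish that the space of ``fake unit'' configurations reaching a Colmez-type tiling from the given one is path-connected. The paper avoids all of this: it keeps the units fixed and replaces $F$ by the explicit multiplicative interpolation $F_0$ of \eqref{ff}, a homeomorphism of the very same tori, and connects $F$ to $F_0$ by the trivial straight-line homotopy (Lemma~\ref{tori}). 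This produces the global degree $\mathrm{sign}(\det(\Log\,\varepsilon_1,\dots,\Log\,\varepsilon_{n-1}))$ directly, which is exactly what is needed to match the denominator of $w_\sigma$ in \eqref{wsigma}. Your treatment of the boundary case via piercing and the fact that $e_n$ avoids every $H_{i,\sigma}$ is essentially the argument the paper makes in \S\ref{BOUNDARY}, and the choice of norm-$1$ slice $S$ rather than $x^{(n)}=1$ is an inessential cosmetic difference.
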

 We prove Theorem 1 by interpreting  the left-hand side of \eqref{Basic}
     as a sum of local
  degrees   of a certain continuous map $ F:\widehat{T}\to T$ between
     a standard $(n-1)$-torus $\widehat{T}$ and the  $(n-1)$-torus $T$ coming from
      the quotient space $\R^n_+/E_+\cong T\times \R_+$. By a basic
  result in algebraic topology,  this sum of local degrees
  equals the  global degree of  $ F $.
   We compute   this global degree  to be 1  by proving that
    $ F$ is homotopic to
   an explicit    homeomorphism  $ F_0 $ of the tori involved.
      To make  the proof more accessible, we have included a short section summarizing the basics
    of topological degree theory.

   During the homotopy  from $F_0$ to $ F$ the intermediate
    maps  $ F_t$ remain  surjective,
   but not necessarily injective. Injectivity fails
   if the interior of the cones $C_\sigma$ intersect,
    leading to the need    to subtract some cones.

The condition \cite{Co1} for Colmez's special units is
$w_\sigma=+1$ for all $\sigma\in S_{n-1}$. If this holds, then
  $V\cdot x$ must intersect one and
only one of the $C_\sigma$'s. Hence we have a
 a new proof of his result.
\begin{corollary} $\mathrm{(}$Colmez  \cite{Co1}$\mathrm{)}$\label{Colmezcor}
 Suppose $w_\sigma=1$ for all $\sigma\in
S_{n-1}$, then $\displaystyle{\bigcup_{\sigma\in S_{n-1}}}C_\sigma$ is a true
fundamental domain for the action of $V$ on $\R^n_+$.
\end{corollary}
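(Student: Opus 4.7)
The plan is to derive the corollary directly from Theorem~\ref{Main} by exploiting the positivity of counts once all signs $w_\sigma$ equal $+1$. The essential point is that while \eqref{Basic} is a signed identity allowing for cancellation, in the all-positive case every term becomes a non-negative integer, so the identity promotes to a much stronger statement about set-theoretic disjointness and covering.

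First I would substitute $w_\sigma=+1$ for all $\sigma\in S_{n-1}$ into \eqref{Basic}. The negative sum vanishes identically, and what remains is
\begin{equation*}
\sum_{\sigma\in S_{n-1}}\#\bigl(C_\sigma\cap V\cdot x\bigr)=1\qquad(x\in\R^n_+).
\end{equation*}
Each cardinality $\#\bigl(C_\sigma\cap V\cdot x\bigr)$ is a non-negative integer, and Theorem~\ref{Main} guarantees these are finite. Since a sum of non-negative integers equals $1$ precisely when exactly one summand is $1$ and all others vanish, it follows that for every $x\in\R^n_+$ there is a unique $\sigma\in S_{n-1}$ and a unique $z\in C_\sigma$ with $z\in V\cdot x$.

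Next I would translate this into the two defining properties of a fundamental domain. Existence of such a $z$ for every $x$ says that the orbit $V\cdot x$ meets $\bigcup_\sigma C_\sigma$, so $\bigcup_\sigma C_\sigma$ surjects onto $\R^n_+/V$. Uniqueness of the pair $(\sigma,z)$ says two things at once: no orbit meets a single cone $C_\sigma$ in more than one point (so the quotient map restricted to each $C_\sigma$ is injective), and no orbit meets two distinct cones $C_\sigma$, $C_{\sigma'}$ simultaneously (so the cones are pairwise $V$-inequivalent, and in particular pairwise disjoint since the identity lies in $V$). Together these say exactly that $\bigcup_{\sigma\in S_{n-1}}C_\sigma$ is a true fundamental domain for the action of $V$ on $\R^n_+$.

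There is no real obstacle here: the entire content of the corollary is the elementary observation that a sum of non-negative integers equal to $1$ has exactly one non-zero term. All the substantive work, including the finiteness of the intersections $C_\sigma\cap V\cdot x$ and the signed identity itself, is imported from Theorem~\ref{Main}.
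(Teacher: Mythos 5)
Your proof is correct and takes the same route as the paper: the paper's own argument is the one-sentence observation (given just before stating Corollary~\ref{Colmezcor}) that when all $w_\sigma=+1$, the signed identity~\eqref{Basic} becomes a sum of non-negative integers equal to $1$, forcing each orbit $V\cdot x$ to meet $\bigcup_\sigma C_\sigma$ in exactly one point. You have merely spelled out the translation into the disjointness-and-covering formulation of a fundamental domain, which is entirely routine.
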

\noindent In fact, we get a slight generalization,
 as it suffices to assume $w_\sigma\not=-1$  for all $ \sigma$. Then
  $\bigcup_{w_\sigma\not=0}C_\sigma$ is still a true fundamental domain.

We now   apply  signed fundamental domains to the computation of $L$-functions.

\begin{corollary}\label{sums} Let $\mathfrak{a}_1,\dots,\mathfrak{a}_{h_+}$
be any set of integral ideals  representing   all the narrow ideal classes of
 a totally real field $k$ of degree $n\ge2$ and narrow class number $h_+$,
 let $\chi$ be  a ray-class
character of  $k$,
 and let the ideal $\mathfrak{f}$  be  the finite part of the
conductor of $\chi$. Then, for any set
 $\varepsilon_1,...,\varepsilon_{n-1}$ of generators of the
 group of totally positive units of $k$, we have
\begin{equation}\label{Lschi}
L(s,\chi)=\sum_{j=1}^{h_+} N(\mathfrak{a}_j\mathfrak{f})^{-s}
\sum_{\substack{\sigma\in S_{n-1} \\ w_\sigma\not=0
}} w_\sigma\sum_{z\in
R^\sigma\!(\mathfrak{a}_j\mathfrak{f})}\chi\big((z)\mathfrak{a}_j\mathfrak{f}\big)
\zeta^\sigma(s,z),
\end{equation}
where  $(z)$ denotes the principal fractional  ideal generated by $z\in k$,
$$
\zeta^\sigma(s,z) :=\sum_{m_1,\dots,m_n=0}^\infty\,
\prod_{j=1}^n \Big(z^{(j)}+\sum_{i=1}^n m_if_{i,\sigma}^{(j)}
\Big)^{-s}\qquad\Big(\re(s)>1,\ \,f_{i,\sigma}:=\prod_{\ell=1}^{i-1}\varepsilon_{\sigma(\ell)}\Big),
$$
is a Shintani zeta function \cite{Sh1} \cite{FR},
\begin{align}\label{Rsigmaaj}
R^\sigma (\mathfrak{a})=&\,R^\sigma (\mathfrak{a};\varepsilon_1,\dots,\varepsilon_{n-1}):=\Big\{ z\in
\mathfrak{a}^{-1}\big|\, z=\sum_{i=1}^n t_i
f_{i,\sigma},\ t_i\in I_{i,\sigma} \Big\},\\ \label{Isigmaaj}
I_{i,\sigma}:=& \,[0,1)\  \ \mathrm{if}\  \ e_n\in H_{i,\sigma}^+\ \,
\big(\mathrm{see \ \eqref{Hisigmaplus}}\big), \quad\qquad
 I_{i,\sigma}:= (0,1]\ \ \mathrm{if}\ \
e_n\in H_{i,\sigma}^-.
\end{align}
\end{corollary}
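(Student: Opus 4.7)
The plan is to start from the Dirichlet series $L(s,\chi)=\sum_\mathfrak{b}\chi(\mathfrak{b})N\mathfrak{b}^{-s}$ (over integral ideals $\mathfrak{b}$, with $\chi(\mathfrak{b})=0$ for $\mathfrak{b}$ not coprime to $\mathfrak{f}$) and group the sum by narrow class. Every integral ideal $\mathfrak{b}$ lies in the narrow class of $\mathfrak{a}_j\mathfrak{f}$ for exactly one $j$ and is uniquely of the form $(y)\mathfrak{a}_j\mathfrak{f}$ with $y\in(\mathfrak{a}_j\mathfrak{f})^{-1}\cap\R^n_+$, determined modulo the action of $E_+$. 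Since the summand $\chi\bigl((y)\mathfrak{a}_j\mathfrak{f}\bigr)N(y)^{-s}$ is $E_+$-invariant in $y$, multiplying by the identity \eqref{Basic} of Theorem \ref{Main} and interchanging sums (justified by absolute convergence of each narrow-class partial zeta for $\re(s)>1$) converts the orbit sum into a signed cone sum:
$$
L(s,\chi)=\sum_{j=1}^{h_+}N(\mathfrak{a}_j\mathfrak{f})^{-s}\sum_{\sigma}w_\sigma\sum_{y\in C_\sigma\cap(\mathfrak{a}_j\mathfrak{f})^{-1}}\chi\bigl((y)\mathfrak{a}_j\mathfrak{f}\bigr)N(y)^{-s}.
$$

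Next I would parametrize $C_\sigma\cap(\mathfrak{a}_j\mathfrak{f})^{-1}$ via the $f_{i,\sigma}$-coordinates. Because each $f_{i,\sigma}$ is a unit of $k$, it lies in $(\mathfrak{a}_j\mathfrak{f})^{-1}$, so writing $y=\sum_i s_i f_{i,\sigma}$ with $s_i\in\R_{i,\sigma}$ and splitting $s_i=m_i+t_i$ into a non-negative integer $m_i$ and a fractional part $t_i\in I_{i,\sigma}$ produces a unique decomposition $y=z+\sum_i m_i f_{i,\sigma}$ with $z\in R^\sigma(\mathfrak{a}_j\mathfrak{f})$ and $m\in\Z_{\ge 0}^n$. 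The set $R^\sigma(\mathfrak{a}_j\mathfrak{f})$ is finite, being the intersection of the lattice $(\mathfrak{a}_j\mathfrak{f})^{-1}$ in $\R^n$ with the bounded half-open parallelotope spanned by the $f_{i,\sigma}$. Total positivity of $y$ gives $N(y)=\prod_j\bigl(z^{(j)}+\sum_i m_i f_{i,\sigma}^{(j)}\bigr)$, so once the $\chi$-factor is pulled outside the $m$-sum, what remains is exactly $\zeta^\sigma(s,z)$.

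The main obstacle, and the only non-formal step, is the identity $\chi\bigl((y)\mathfrak{a}_j\mathfrak{f}\bigr)=\chi\bigl((z)\mathfrak{a}_j\mathfrak{f}\bigr)$ that licenses pulling $\chi$ out of the $m$-sum. I would discard terms with $\chi((z)\mathfrak{a}_j\mathfrak{f})=0$, so may assume $(z)\mathfrak{a}_j\mathfrak{f}$ is coprime to $\mathfrak{f}$; this forces the sharp equality $v_\mathfrak{p}(z)=-v_\mathfrak{p}(\mathfrak{a}_j\mathfrak{f})$ at every prime $\mathfrak{p}\mid\mathfrak{f}$. Setting $w:=\sum_i m_i f_{i,\sigma}\in\mathcal{O}_k$ gives $y/z=1+w/z$, and a local valuation check yields $v_\mathfrak{p}(w/z)\ge v_\mathfrak{p}(\mathfrak{f})$ at every prime: for $\mathfrak{p}\mid\mathfrak{f}$ the minimality of $v_\mathfrak{p}(z)$ combined with $v_\mathfrak{p}(w)\ge 0$ gives $v_\mathfrak{p}(w/z)\ge v_\mathfrak{p}(\mathfrak{a}_j\mathfrak{f})\ge v_\mathfrak{p}(\mathfrak{f})$, and for $\mathfrak{p}\nmid\mathfrak{f}$, $v_\mathfrak{p}(w/z)\ge v_\mathfrak{p}(w)+v_\mathfrak{p}(\mathfrak{a}_j)\ge 0$. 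Hence $w/z\in\mathfrak{f}$, so $y/z\equiv 1\pmod{\mathfrak{f}}$ and $y/z\gg 0$; the ray-class character $\chi$ is therefore trivial on the principal ideal $(y/z)$, yielding the desired identity and thus \eqref{Lschi}.
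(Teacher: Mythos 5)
Your proof follows essentially the same route as the paper's: group the Dirichlet series by narrow class, convert the $E_+$-orbit sum to a signed cone sum using Theorem~\ref{Main} (the paper packages this interchange in Lemma~\ref{signeddoms}, which also supplies the absolute-convergence justification), split each $y\in C_\sigma\cap(\mathfrak{a}_j\mathfrak{f})^{-1}$ uniquely as $y=z+w$ with $z\in R^\sigma(\mathfrak{a}_j\mathfrak{f})$ and $w=\sum_i m_i f_{i,\sigma}$, $m_i\in\Z_{\ge0}$, and finally reduce to the character identity $\chi\bigl((y)\mathfrak{a}_j\mathfrak{f}\bigr)=\chi\bigl((z)\mathfrak{a}_j\mathfrak{f}\bigr)$ by a local valuation check. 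This is precisely the paper's argument.

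One step is wrong as written, though easily repaired. For $\mathfrak{p}\nmid\mathfrak{f}$ you claim $v_\mathfrak{p}(w/z)\ge v_\mathfrak{p}(w)+v_\mathfrak{p}(\mathfrak{a}_j)\ge 0$; but $v_\mathfrak{p}(w/z)=v_\mathfrak{p}(w)-v_\mathfrak{p}(z)$ and $z\in(\mathfrak{a}_j\mathfrak{f})^{-1}$ only gives the bound $v_\mathfrak{p}(z)\ge-v_\mathfrak{p}(\mathfrak{a}_j)$, which runs the wrong way, so the conclusion $w/z\in\mathfrak{f}$ is false in general. Fortunately it is also unnecessary: the multiplicative congruence defining the ray modulo $\mathfrak{f}\infty$ constrains $y/z$ only at primes dividing $\mathfrak{f}$ (plus total positivity), and there your bound $v_\mathfrak{p}(w/z)\ge v_\mathfrak{p}(\mathfrak{a}_j\mathfrak{f})\ge v_\mathfrak{p}(\mathfrak{f})$ is correct. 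Simply delete the claim about $\mathfrak{p}\nmid\mathfrak{f}$. It is also worth noting explicitly, as the paper does, that ``discarding terms with $\chi\bigl((z)\mathfrak{a}_j\mathfrak{f}\bigr)=0$'' is legitimate because $(z)\mathfrak{a}_j\mathfrak{f}$ and $(y)\mathfrak{a}_j\mathfrak{f}$ are simultaneously coprime to $\mathfrak{f}$: at each $\mathfrak{p}\mid\mathfrak{f}$ one has $v_\mathfrak{p}(w)\ge0>-v_\mathfrak{p}(\mathfrak{a}_j\mathfrak{f})$, so $v_\mathfrak{p}(y)=-v_\mathfrak{p}(\mathfrak{a}_j\mathfrak{f})$ if and only if $v_\mathfrak{p}(z)=-v_\mathfrak{p}(\mathfrak{a}_j\mathfrak{f})$. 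With these two small fixes your argument matches the paper's.
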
 \noindent Here   $\chi$ is not
necessarily primitive, it is extended by 0 to all integral ideals of
$k$  not relatively prime to $\mathfrak{f}$, and the narrow class
group Cl$_+$ is understood in its strictest sense,
 \ie an ideal $\mathfrak{a}$ represents the  trivial class in Cl$_+$
  iff $\mathfrak{a}=(z)$ for some $z\in k^*$ which  is positive at all embeddings of $k$. Note in \eqref{Rsigmaaj} that
$t_i\in \Q$  since the $f_{i,\sigma}$
are a $\Q$-basis for $k$ when $w_\sigma\not=0$. The sets
$R^\sigma(\mathfrak{a})$ are finite since
$\mathfrak{a}^{-1} \subset\R^n$ is
discrete.

Among the various expressions that Shintani gave for abelian
$L$-functions, \eqref{Lschi} closely resembles the one he published for real
quadratic fields \cite[Lemma 3]{Sh2}. In \S \ref{Corollary4}
we also give a  formula for ray class zeta functions,  analogous to \eqref{Lschi}.

We are very grateful to the referee for supplying us with an elegant proof of Lemma \ref{envantage}
below and for nudging us into simplifying our treatment of the boundaries of the cones.

\section{Signed fundamental domains}
\begin{definition}\label{vd}  A signed fundamental domain
$\{(X_i,w_i)\}_i$ for the action of a group $G$
 on a set $X$ is a finite sequence of
subsets $X_i\subset X$ and weights
$w_i\in\C$ for which there exists a constant  $K\in\R$, such that for all
$x\in X$
the cardinality $| X_i\cap G\cdot x|\le K \ \, (1\le i\le m)$,
 and
   $$\sum_{i=1}^m w_i\,| X_i\cap G \cdot x| = 1.$$\end{definition}
\noindent Note that if $Y\subset X$ is a $G$-subset,
 \ie $g\cdot y\in Y$ for all $y\in Y$ and $g\in G$,
and $\{(X_i,w_i)\}_i$ is as in Definition \ref{vd},
 then $\{(Y\cap X_i,w_i)\}_i$  is a signed fundamental domain
 for the action of $G$ on $Y$.

\begin{lemma}\label{signeddoms}
Suppose\begin{enumerate}
\item $X$ is a topological space on which the countable group $G$
acts by homeomorphisms.
\item  $ \{(X_i,w_i)\}_i$ is a signed fundamental
domain,  with each $X_i$ a Borel set $(1\le i\le m)$.
\item  $\mu$  is a positive $G$-invariant Borel
  measure (so $\mu(g\cdot A)=\mu(A)$ for any Borel set $A\subset X$
  and any $g\in G$).
  \item  $f:X\to \C$ is a Borel-measurable
   $G$-invariant function (so $f(g\cdot x)= f(x)$ for any $x\in X$ and $g\in G$).
\item The Borel set $F$ is a    true  fundamental domain  for $G$ acting on
    $X$
and   $ \int_{ F}|f(x)|\,d\mu_{(x)}<\infty$.
\end{enumerate} Then $\int_{ X_i}
|f(x)|\, d\mu_{(x)}<\infty\ \,(1\le i\le m)$  and
  $$
\int_{ F}f(x)\,d\mu_{(x)}= \sum_{i=1}^m w_i \int_{ X_i}
f(x)\, d\mu_{(x)}.
  $$
\end{lemma}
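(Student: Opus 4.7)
The plan is to unfold every integral $\int_{X_i}f\,d\mu$ onto the true fundamental domain $F$ by means of the $G$-action, and then to collapse the resulting weighted sum using the defining identity of Definition \ref{vd}.

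First, since $F$ meets each $G$-orbit in exactly one point, the $G$-translates $\{g\cdot F\}_{g\in G}$ partition $X$ (up to a $\mu$-null boundary). Intersecting with $X_i$ gives the Borel decomposition $X_i=\bigsqcup_{g\in G}(X_i\cap g\cdot F)$, so that
$$\int_{X_i}|f|\,d\mu=\sum_{g\in G}\int_{X_i\cap g\cdot F}|f|\,d\mu.$$
Performing the substitution $x=g\cdot y$ in each summand and invoking the $G$-invariance of both $\mu$ and $f$ converts the $g$th summand into $\int_F\chi_{X_i}(g\cdot y)\,|f(y)|\,d\mu(y)$. By Tonelli's theorem one may interchange the countable sum with the integral, and since $\sum_{g\in G}\chi_{X_i}(g\cdot y)=|X_i\cap G\cdot y|\le K$, one obtains
$$\int_{X_i}|f|\,d\mu=\int_F|X_i\cap G\cdot y|\,|f(y)|\,d\mu(y)\le K\int_F|f|\,d\mu<\infty.$$
This proves the integrability claim and, by absolute convergence, permits re-running exactly the same unfolding with $f$ in place of $|f|$, giving the key identity
$\int_{X_i}f\,d\mu=\int_F|X_i\cap G\cdot y|\,f(y)\,d\mu(y)$.

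The conclusion now follows by summing this identity over $i$ against the weights $w_i$ and pulling the resulting \emph{finite} sum inside the integral:
$$\sum_{i=1}^m w_i\int_{X_i}f\,d\mu=\int_F\Bigl(\sum_{i=1}^m w_i\,|X_i\cap G\cdot y|\Bigr)f(y)\,d\mu(y)=\int_F f\,d\mu,$$
where the last equality is the defining property $\sum_i w_i|X_i\cap G\cdot y|=1$ of Definition \ref{vd}. The main delicate point in this outline is justifying the interchange of the $G$-indexed sum with the integral in the unfolding step; this is precisely what the uniform bound $|X_i\cap G\cdot y|\le K$ buys us via Tonelli, once we know $f$ is absolutely integrable on $F$.
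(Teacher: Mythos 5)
Your proof is correct and follows essentially the same unfolding argument the paper uses: decompose $X_i$ along the $G$-translates of $F$, change variables using $G$-invariance of $\mu$ and $f$, interchange the countable sum with the integral (justified by Tonelli and then by the uniform bound $K$), and invoke the defining identity $\sum_i w_i|X_i\cap G\cdot y|=1$. The only cosmetic difference is that the paper presents the chain of equalities for the signed identity in the reverse order, and it treats $\{g\cdot F\}$ as an exact partition of $X$ (no null boundary needed, since $F$ is a true fundamental domain in the strict sense), whereas your parenthetical about a $\mu$-null boundary is an unnecessary hedge.
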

\begin{proof}
Let $\chi_i$ be the characteristic function of $X_i$.
As $F$ is a fundamental domain for the action of $G$ on $X$,
$$
\bigcup_{g\in G} (g\cdot F)=X\ \text{(countable disjoint union)},
\ \qquad \sum_{g\in G}\chi_i(g\cdot x)=| X_i\cap G\cdot x| \le K,$$
with $K$ as in the definition of a signed fundamental domain.
We have then
\begin{align*}
    \int_{ X_i}|f(x)| \,d\mu_{(x)}&=
     \int_{ X}|f(x)|\,\chi_i(x)\,d\mu_{(x)}=
      \sum_{g\in G}\int_{g\cdot F}|f(x)|\,\chi_i(x)\,d\mu_{(x)} \\
&=\sum_{g\in G}\int_{  F}|f(g\cdot x)|\,
  \chi_i(g\cdot x) \,d\mu_{(x)}=
  \sum_{g\in G}\int_{  F}|f( x)| \, \chi_i(g\cdot x) \,d\mu_{(x)}\\
&=\int_{  F}|f( x)| \Big( \sum_{g\in G}\chi_i(g\cdot x)\Big)
\,d\mu_{(x)}\le K\int_{  F}|f( x)| \,d\mu_{(x)}<\infty,
\end{align*}
proving the first claim in the lemma.
Similarly,
\begin{align*}
\int_{ F}f(x)\Big(\sum_{g\in G}\chi_i (g\cdot x)&\Big)d\mu_{(x)}
= \sum_{g\in G}\int_{ F}f(x)\chi_i(g\cdot
x)\,d\mu_{(x)}  \\
&\ = \sum_{g\in G} \int_{ F}f(g\cdot x)\chi_i(g\cdot
x)\,d\mu_{(x)} = \sum_{g\in G}\int_{g\cdot F}f(
x)\chi_i(x)\,d\mu_{(x)} \\
&\ =\int_{ X}f( x)\chi_i(x)\,d\mu_{(x)} = \int_{ X_i}f(
x) \,d\mu_{(x)}.
\end{align*}
By Definition \ref{vd}, $\sum_{i=1}^m w_i \sum_{g\in G}\chi_i(g\cdot x)=1$, so
\begin{align*}
\int_{ F} f(x)\,d\mu_{(x)}= \sum_{i=1}^m
w_i \int_{ F}f(x)\Big(
\sum_{g\in G}\chi_i(g\cdot x)\Big)d\mu_{(x)}   =\sum_{i=1}^m w_i \int_{ X_i}f(
x) \,d\mu_{(x)}.
\end{align*}
\end{proof}

\section{Proof of corollaries of main theorem }\label{Corollary4}
We first prove Corollary \ref{sums}, which we do not repeat here. Let
$\chi$ be a character of the ray class group of $k$ with conductor $\mathfrak{f}
\infty$, where $\infty$ is the formal product of all the archimedean places
of the  totally real field  $k$.
The (not necessarily primitive) $L$-function attached to $\chi$ is
$L(s,\chi):=\sum_{\mathfrak{b}}\chi(\mathfrak{b})
\mathrm{N}\mathfrak{b}^{-s}$, where $\re(s)>1,\ \,\mathfrak{b}$
ranges over all integral ideals of $k$, $\mathrm{N}$ is the absolute   norm, and $\chi(\mathfrak{b}):=0$ if
$\mathfrak{b}$ is not prime to $ \mathfrak{f}$.
Recall that we regard $k\subset \R^n$. Let $F\subset\R^n_+$
 be any true fundamental domain
 for the action of $E_+$ on $\R^n_+$.
We can pass from sums over ideals $\mathfrak{b}$
 to sums over lattice elements   $\gamma\in F$ since
for each $\mathfrak{b}$ there is a unique $j\ \, (1\le j\le h_+)$ and
$\gamma\in\mathfrak{a}_j^{-1}\mathfrak{f}^{-1}\cap F$ such that
$\mathfrak{b}=(\gamma)\mathfrak{a}_j\mathfrak{f}$.

By Theorem \ref{Main} and the remark following Definition \ref{vd},
$\big\{(C_\sigma\cap \mathfrak{a}_j^{-1}\mathfrak{f}^{-1},
 w_\sigma)\big\}_{w_\sigma\not=0}$
is a signed fundamental domain for the action of
$E_+$ on $X_j:=\mathfrak{a}_j^{-1}\mathfrak{f}^{-1}\cap \R_+^n$. Similarly, $F\cap X_j$
is a true  fundamental domain for the action of
$E_+$ on $X_j$.  Applying Lemma \ref{signeddoms}
to the discrete space $X_j$,  group $E_+$,
 counting measure $\mu$  and invariant function
    $f(\gamma):=\chi\big((\gamma)\mathfrak{a}_j\mathfrak{f}\big)
    \mathrm{N}(\gamma)^{-s}$, we find
\begin{align} 
\nonumber
L(s,\chi)&=\sum_{j=1}^{h_+}\mathrm{N}\big(\mathfrak{a}_j\mathfrak{f}\big)^{-s}
\int_{  F\cap X_j}\chi\big((\gamma)\mathfrak{a}_j\mathfrak{f} \big)
\mathrm{N}(\gamma)^{-s}d\mu_{(\gamma)}\qquad\qquad\big(\re(s)>1\big)\\
&=\sum_{j=1}^{h_+}\mathrm{N}\big(\mathfrak{a}_j\mathfrak{f}\big)^{-s}
\sum_{\substack{\sigma\in S_{n-1} \\
w_\sigma\not=0}}w_\sigma\int_{C_\sigma\cap X_j}\chi\big((\gamma)\mathfrak{a}_j\mathfrak{f} \big)
\mathrm{N}(\gamma)^{-s}d\mu_{(\gamma)}
.\nonumber
\end{align}

Thus, to prove Corollary \ref{sums} we must  show
\begin{equation}\label{need 44}
\sum_{\gamma\in C_\sigma\cap\, \mathfrak{a}_j^{-1}\mathfrak{f}^{-1}}
\chi\big((\gamma)\mathfrak{a}_j\mathfrak{f}\big)\mathrm{N}(\gamma)^{-s}=
\sum_{z\in R^\sigma\!(\mathfrak{a}_j\mathfrak{f})} \chi\big((z)\mathfrak{a}_j
\mathfrak{f}\big)\zeta^\sigma(s,z)\qquad\quad\big( \re(s)>1\big).
\end{equation}
This  was done by Shintani
 \cite{Sh2}, but we include the details here for completeness.
Recall   from \eqref{Csigma} that
$ C_\sigma := \sum_{i=1}^n  \R_{i,\sigma}\cdot f_{i,\sigma}$,
where $ f_{i,\sigma}\in E_+$ and $\R_{i,\sigma}:=[0,\infty)$
if $e_n\in H_{i,\sigma}^+$,  $\,\R_{i,\sigma}:=(0,\infty)$ if $e_n\in H_{i,\sigma}^-$.
Any $\gamma=\sum_i y_i f_{i,\sigma}\in C_\sigma$ can be uniquely written as
$\gamma=\sum_i t_i f_{i,\sigma}+\sum_i m_i f_{i,\sigma}$, where $m_i\in\Z,\ m_i\ge0,$
and $t_i\in[0,1)$ or $t_i\in(0,1]$ according to whether $e_n\in H_{i,\sigma}^+$ or not (\ie in the
notation of \eqref{Isigmaaj}, $t_i\in I_{i,\sigma}$).
 Conversely, any such $t_i$ and $m_i$ define a $\gamma\in C_\sigma$.
 Note that $\sum_i m_i f_{i,\sigma}\in \mathfrak{a}_j^{-1}
 \mathfrak{f}^{-1}$ since $f_{i,\sigma}\in
 E_+\subset\mathfrak{a}_j^{-1}\mathfrak{f}^{-1}$,
as $ \mathfrak{a}_j$ and $\mathfrak{f}$
are integral ideals. Hence
$$
z:=\sum_{i=1}^n  t_i f_{i,\sigma}\in
\mathfrak{a}_j^{-1}\mathfrak{f}^{-1}\ \Longleftrightarrow\
\gamma := \sum_{i=1}^n  y_i f_{i,\sigma}\in \mathfrak{a}_j^{-1}
\mathfrak{f}^{-1}\qquad \quad \Big(\gamma-z=\sum_{i=1}^n m_if_{i,\sigma}\Big).
$$
Hence to prove  \eqref{need 44} it suffices to prove
$ \chi\big( (\gamma) \mathfrak{a}_j\mathfrak{f} \big)=
 \chi\big( (z)\mathfrak{a}_j \mathfrak{f} \big)$.

 Note that when $\gamma\in\mathfrak{a}_j^{-1}\mathfrak{f}^{-1}$,
  the integral ideal $(\gamma) \mathfrak{a}_j \mathfrak{f} $
  is relatively prime to $ \mathfrak{f} $ if and only
   if $(z)\mathfrak{a}_j \mathfrak{f}$ is. If either ideal has a common
   factor with $ \mathfrak{f} $, we trivially have
    $\chi\big((z)\mathfrak{a}_j \mathfrak{f}\big )
 =0=\chi\big((\gamma) \mathfrak{a}_j\mathfrak{f}\big )$.
  So assume that $(z) \mathfrak{a}_j\mathfrak{f}$
  is relatively prime to $ \mathfrak{f} $. Then
  $$
 \big( (\gamma)\mathfrak{a}_j \mathfrak{f} \big)\big((z)
  \mathfrak{a}_j\mathfrak{f} \big)^{-1}=
  (\gamma z^{-1})=\Big(1+z^{-1}\sum_i m_i f_{i,\sigma}\Big).
  $$
   At primes $\p$ of $k$ dividing $\mathfrak{f} $, the valuation
 $\mathrm{ord}_\p\big((z)\mathfrak{a}_j \mathfrak{f} \big )=0$.
  Hence at such primes,
 $$
 \mathrm{ord}_\p\Big(z^{-1}\sum_i m_i f_{i,\sigma}\Big)=
 \mathrm{ord}_\p\Big( \mathfrak{a}_j\mathfrak{f}
 \sum_i m_i f_{i,\sigma}\Big)\ge
  \mathrm{ord}_\p\big(\mathfrak{a}_j \mathfrak{f}\big)
  \ge\mathrm{ord}_\p\big( \mathfrak{f}\big).
 $$
As $ 1+z^{-1}\sum_i m_i f_{i,\sigma}$ is
 totally positive,  $\chi\big( (\gamma)\mathfrak{a}_j \mathfrak{f} \big)=
 \chi\big( (z)\mathfrak{a}_j \mathfrak{f} \big)$
  by definition of the ray class group with conductor $\mathfrak{f}\infty$ \cite[p.\ 365]{Neu}.\qed

Next we prove an expression for the zeta function
$\zeta(s,\overline{\mathfrak{a}}):=\sum_{\mathfrak{b}\in
\overline{\mathfrak{a}}}\mathrm{N}\mathfrak{b}^{-s}$
attached  to a ray class $\overline{\mathfrak{a}}$  modulo
 $\mathfrak{f}\infty$.  Here $\mathfrak{b}$
  runs over all integral ideals in
 $\overline{\mathfrak{a}}$, and the ray classes are
again taken in the strictest sense, \ie
  $\mathfrak{f}\infty$  is  the formal
product of   an integral ideal  $\mathfrak{f}$ with
all $n$ archimedean places of the totally real field $k$.
\begin{corollary}\label{partialzeta}
Suppose    $\eta_1,\dots,\eta_{n-1}$ generate the
group $E_\mathfrak{f}^+$ of
totally positive units of  $k$ which are congruent
 to $\mathrm{1}$ modulo $\mathfrak{f}$,
let   $\mathfrak{a}\in\overline{\mathfrak{a}}$ be
 an integral ideal and
 let   $\Z\cap\mathfrak{f}=:f\Z$, with $ f\in\N$. Then
$$
\zeta(s,\overline{\mathfrak{a}})=\mathrm{N}\mathfrak{a}^{-s}
\sum_{\substack{\sigma\in S_{n-1}\\ w_\sigma\not=0
}} w_\sigma\sum_{z\in
R_{\mathfrak{f},\mathfrak{a}}^\sigma}\zeta_\mathfrak{f}^\sigma(s,z)\qquad\qquad\big(\re(s)>1\big),
$$
where
\begin{align*}
\zeta_\mathfrak{f}^\sigma(s,z)&:=\sum_{m_1,\dots,m_n=0}^\infty\,
\prod_{j=1}^n \Big(z^{(j)}+f\sum_{i=1}^n m_ig_{i,\sigma}^{(j)}
\Big)^{-s}\qquad \qquad\qquad \Big(g_{i,\sigma} :=\prod_{\ell=1}^{i-1}\eta_{\sigma(\ell)}\Big),
\\
R_{\mathfrak{f},\mathfrak{a}}^\sigma &:=\Big\{ z\in
1+\mathfrak{a}^{-1}\mathfrak{f}\big|\, z=f\sum_{i=1}^n t_i g_{i,\sigma},
\ t_i\in I_{i,\sigma} \Big\},
\\
I_{i,\sigma}&:= \begin{cases}[0,1)\  & \mathrm{if}\ r_i>0
 \ \mathrm{when \ we \ write}\   e_n=[0, \dots,0,1] =\sum_{j=1}^n r_jg_{j,\sigma} ,\\
  (0,1]\ & \mathrm{otherwise.}
\end{cases}
\end{align*}
\end{corollary}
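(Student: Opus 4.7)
The plan is to mirror the proof of Corollary \ref{sums} above, replacing the full group $E_+$ with the congruence subgroup $E_{\mathfrak{f}}^+$ and the lattice $\mathfrak{a}_j^{-1}\mathfrak{f}^{-1}$ with the affine set $1+\mathfrak{a}^{-1}\mathfrak{f}$. Since $\eta_1,\dots,\eta_{n-1}$ generate $E_{\mathfrak{f}}^+$, a subgroup of finite index in $E_+$, Theorem \ref{Main} applied with $\varepsilon_i:=\eta_i$ produces cones $C_\sigma$ built from the $g_{i,\sigma}$ together with signs $w_\sigma\in\{0,\pm1\}$ such that $\{(C_\sigma,w_\sigma)\}_\sigma$ is a signed fundamental domain for the action of $E_{\mathfrak{f}}^+$ on $\R^n_+$.

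Set $X:=(1+\mathfrak{a}^{-1}\mathfrak{f})\cap\R^n_+$. First I would verify that $E_{\mathfrak{f}}^+$ stabilizes $X$ under coordinate-wise multiplication: for $\eta\in E_{\mathfrak{f}}^+$ and $\gamma=1+\delta\in X$ one has $\eta\gamma-1=(\eta-1)+\eta\delta\in\mathfrak{f}+\mathfrak{a}^{-1}\mathfrak{f}=\mathfrak{a}^{-1}\mathfrak{f}$, since $\mathfrak{a}$ is integral. Since ideals in the ray class $\overline{\mathfrak{a}}$ modulo $\mathfrak{f}\infty$ are by definition coprime to $\mathfrak{f}$, I would then show that $\gamma\mapsto(\gamma)\mathfrak{a}$ descends to a bijection between $X/E_{\mathfrak{f}}^+$ and the integral ideals in $\overline{\mathfrak{a}}$. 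Surjectivity uses that any integral $\mathfrak{b}\in\overline{\mathfrak{a}}$ equals $(\gamma)\mathfrak{a}$ for some totally positive $\gamma\in\mathfrak{a}^{-1}$ with $v_\mathfrak{p}(\gamma-1)\ge v_\mathfrak{p}(\mathfrak{f})$ at $\mathfrak{p}\mid\mathfrak{f}$; combined with the trivial bound $v_\mathfrak{p}(\gamma-1)\ge -v_\mathfrak{p}(\mathfrak{a})$ at $\mathfrak{p}\nmid\mathfrak{f}$, this forces $\gamma\in X$. Injectivity amounts to the tautology that two such representatives differ by an element of $E_{\mathfrak{f}}^+$. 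This yields
\[
\zeta(s,\overline{\mathfrak{a}})=\mathrm{N}\mathfrak{a}^{-s}\sum_{\gamma\in F}\mathrm{N}(\gamma)^{-s}\qquad\bigl(\re(s)>1\bigr)
\]
for any true fundamental domain $F\subset X$ for $E_{\mathfrak{f}}^+$.

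Next I would apply Lemma \ref{signeddoms} to the discrete space $X$ with counting measure, group $E_{\mathfrak{f}}^+$, invariant function $\mathrm{N}(\gamma)^{-s}$, and the induced signed fundamental domain $\{(C_\sigma\cap X,w_\sigma)\}_\sigma$ (as in the remark after Definition \ref{vd}), converting the sum over $F$ into
\[
\zeta(s,\overline{\mathfrak{a}})=\mathrm{N}\mathfrak{a}^{-s}\sum_{w_\sigma\ne 0}w_\sigma\sum_{\gamma\in C_\sigma\cap X}\mathrm{N}(\gamma)^{-s}.
\]
To identify the inner sum with $\sum_{z}\zeta_{\mathfrak{f}}^\sigma(s,z)$, I would unfold \`a la Shintani: each $\gamma\in C_\sigma$ admits a unique expansion $\gamma=f\sum_i(t_i+m_i)g_{i,\sigma}$ with $m_i\in\Z_{\ge 0}$ and $t_i\in I_{i,\sigma}$, the half-open interval being chosen according to whether $e_n\in H_{i,\sigma}^+$ or $H_{i,\sigma}^-$, i.e.\ according to the sign of the coefficient $r_i$ of $e_n$ in the $g_{j,\sigma}$-basis. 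Setting $z:=f\sum_i t_ig_{i,\sigma}$, the difference $\gamma-z=f\sum_i m_ig_{i,\sigma}$ lies in $\mathfrak{f}$ because $f\in\mathfrak{f}$ and each $g_{i,\sigma}$ is a unit, so $\gamma\in X$ if and only if $z\in X$, i.e.\ $z\in R_{\mathfrak{f},\mathfrak{a}}^\sigma$.

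The main obstacle is the careful bookkeeping in this last step: establishing that $\gamma\leftrightarrow(z,(m_1,\dots,m_n))$ is a bijection between $C_\sigma\cap X$ and $R_{\mathfrak{f},\mathfrak{a}}^\sigma\times\Z_{\ge 0}^n$ uses both that the $g_{i,\sigma}$ form a $\Q$-basis of $k$ when $w_\sigma\ne 0$ (forcing the $t_i$ to be rational and the decomposition to be unique) and the exact matching of the half-open interval conventions between \eqref{Csigma}--\eqref{Risigma} and the definition of $R_{\mathfrak{f},\mathfrak{a}}^\sigma$. Once this is in hand, the inner sum rearranges into $\sum_{z\in R_{\mathfrak{f},\mathfrak{a}}^\sigma}\zeta_{\mathfrak{f}}^\sigma(s,z)$, completing the proof.
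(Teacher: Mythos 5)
Your proposal is correct and follows the paper's own proof, which is deliberately terse: the paper simply says to re-run the argument of Corollary~\ref{sums} with $E_+$, $\mathfrak{a}_j^{-1}\mathfrak{f}^{-1}$, and $F$ replaced by $E_\mathfrak{f}^+$, $1+\mathfrak{a}^{-1}\mathfrak{f}$, and $F_\mathfrak{f}$, and to note that the $I_{i,\sigma}$ of the corollary are those of \eqref{Isigmaaj} rewritten via footnote~\ref{Hfootie}, with $fg_{i,\sigma}$ used so that the cone generators lie in $\mathfrak{a}^{-1}\mathfrak{f}$. You have spelled out exactly these steps (stability of $1+\mathfrak{a}^{-1}\mathfrak{f}$ under $E_\mathfrak{f}^+$, the bijection with integral ideals in $\overline{\mathfrak{a}}$, the appeal to Theorem~\ref{Main}, Lemma~\ref{signeddoms}, and the Shintani unfolding with the half-open-interval bookkeeping), so the argument is sound and essentially the paper's.
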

\begin{proof}
Using a fundamental domain $F_\mathfrak{f}$ for the action
 of $E_\mathfrak{f}^+$ on $\R^n_+$, we
 re-write the sum over $\mathfrak{b}$
 defining $\zeta(s,\overline{\mathfrak{a}})$,  letting
 $\mathfrak{b}=\mathfrak{a}(\gamma)$, where
 $ \gamma \in  1+\mathfrak{a}^{-1}\mathfrak{f}$
  and $\gamma \in  F_\mathfrak{f}$.
 From here on we proceed as in the
 proof of Corollary \ref{sums}, replacing
 $\mathfrak{a}_j^{-1}\mathfrak{f}^{-1}$ by
  $1+\mathfrak{a}^{-1}\mathfrak{f}$, $\ \ \ \ \quad F$
   by $F_\mathfrak{f}$, and $E_+$ by $E_\mathfrak{f}^+$. The definition of
   $I_{i,\sigma}$ in Corollary \ref{partialzeta}
    differs formally from the one given in \eqref{Isigmaaj} because this time
   we used footnote \ref{Hfootie} to describe the hyperplanes determined by the faces of the cone $C_\sigma(\eta_1,\dots,\eta_{n-1})$.
 In the proof of Corollary \ref{partialzeta} we need not
  worry about character values, but we must use generators
 of $C_\sigma$  in $\mathfrak{a}^{-1}\mathfrak{f}$, hence the  need for the $fg_{i,\sigma}$.
\end{proof}

\section{From cones to polytopes}\label{lowering}
Since  we are interested only
in cone domains, signed or not,
it is natural to consider the action of $V$ on the
set $\mathcal{L}$ of  half-lines in $\R^n_+$ emanating from 0.
The action by $\varepsilon\in V$ takes half-lines to half-lines, so one
easily sees that a
  fundamental domain for the action of $V$ on $\mathcal{L}$
automatically yields a cone fundamental domain
 for the action of $V$ on $\R^n_+$, and
conversely. In   this section
 we extend this old idea to signed fundamental domains.

For $n\ge2$ and $x\in\R^n$ with non-vanishing last coordinate $x^{(n)}$,
  define $\ell(x)\in\R^{n-1}$ as
\begin{equation}\label{ell}
 \ell (x):=\Big( \frac{x^{(1)}}{x^{(n)}},
 \frac{x^{(2)}}{x^{(n)}},\dots, \frac{x^{(n-1)}}{x^{(n)}}
 \Big)  \qquad\qquad\big(x\in\R^n,\ \, x^{(n)}\not=0\big).
\end{equation}
The  reason for the usefulness of $\ell$ is that the intersection of
the half-line $L_x:=\{tx\}_{t\in\R_+}$ with the hyperplane $x^{(n)}=1$
occurs at the point $\big(\ell(x),1\big)$. For any $y\in\R^{n-1}_+$,
    the set of $x\in\R^{n}_+$ satisfying $\ell(x)=y$  is exactly the half-line
    $L_{(y,1)}$.

  Define
\begin{equation}\label{vtilde}
  {\widetilde{V}}:=\ell(V)=\langle{\widetilde{\varepsilon}}_1,\dots,
  {\widetilde{\varepsilon}}_{n-1}\rangle\subset\R_+^{n-1},\qquad
   {\widetilde{\varepsilon}}_i:=\ell(\varepsilon_i),
\end{equation}
 where
$V:=\langle\varepsilon_1,\dots,\varepsilon_{n-1}
\rangle\subset\Ok\subset\R^n_+ ,$ as in Theorem \ref{Main}.
We regard Euclidean space as a ring  under coordinate-wise
  multiplication, so ${\widetilde{V}}$ acts on $\R_+^{n-1}$. 
The next result  will let us pass from   $(n-1)$-simplices to
  $n$-cones in the proof of Theorem \ref{Main}.
\begin{lemma}\label{loseone}  If
$\{(\gamma_i,w_i)\}_i$ is a signed fundamental domain for the
action of ${\widetilde{V}}$ on $\R^{n-1}_+$,  then $\{(\Gamma_i,w_i)\}_i$ is
a signed fundamental domain for the action of $V$ on $\R^n_+$,
where
$\Gamma_i:=\big\{x\in\R^n_+\big|\,\ell(x)\in\gamma_i\big\}$.
\end{lemma}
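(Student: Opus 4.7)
The plan is to transfer the signed-fundamental-domain identity from $\R^{n-1}_+$ up to $\R^n_+$ by means of the radial projection $\ell$ of \eqref{ell}. Two structural facts do essentially all the work.

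First, $\ell$ is $V$-equivariant in the sense that $\ell(\varepsilon\cdot x)=\widetilde\varepsilon\cdot\ell(x)$ for every $\varepsilon\in V$ and every $x\in\R^n_+$, a direct check from \eqref{ell} and \eqref{vtilde} using coordinate-wise multiplication. Combined with the definition $\Gamma_i=\{x\in\R^n_+:\ell(x)\in\gamma_i\}$, this yields the equivalence
$$\varepsilon\cdot x\in\Gamma_i\ \Longleftrightarrow\ \widetilde\varepsilon\cdot\ell(x)\in\gamma_i.$$

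Second, I would verify that the natural group homomorphism $V\to\widetilde V$, $\varepsilon\mapsto\widetilde\varepsilon$, is in fact an \emph{isomorphism}, not merely the surjection granted by \eqref{vtilde}. For injectivity, $\widetilde\varepsilon=(1,\dots,1)$ forces all $n$ conjugates $\tau_i(\varepsilon)$ to agree; via the algebra isomorphism $k\otimes_\Q\R\cong\R^n$ induced by the $\tau_i$, this places $\varepsilon$ in $\Q$, and then $\varepsilon\in\Ok$ together with total positivity pin down $\varepsilon=1$. Consequently, for any fixed $x\in\R^n_+$ the restriction of $\ell$ to the orbit $V\cdot x$ is a bijection onto $\widetilde V\cdot y$, where $y:=\ell(x)$.

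Putting these two facts together, for each $i$
$$|\Gamma_i\cap V\cdot x|=|\{\varepsilon\in V:\widetilde\varepsilon\cdot y\in\gamma_i\}|=|\gamma_i\cap\widetilde V\cdot y|,$$
the last equality using injectivity of $\varepsilon\mapsto\widetilde\varepsilon$. Summing against $w_i$ and invoking the signed-fundamental-domain hypothesis on $\{(\gamma_i,w_i)\}_i$ applied at the point $y\in\R^{n-1}_+$ gives
$$\sum_i w_i\,|\Gamma_i\cap V\cdot x|=\sum_i w_i\,|\gamma_i\cap\widetilde V\cdot y|=1,$$
and the uniform cardinality bound $|\Gamma_i\cap V\cdot x|\le K$ transfers verbatim from dimension $n-1$.

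I expect the only genuinely non-bookkeeping step to be the injectivity of $V\to\widetilde V$; the equivariance of $\ell$, the translation of orbit intersections, and the transfer of the weighted sum are all direct verifications from the definitions in \eqref{ell}--\eqref{vtilde}.
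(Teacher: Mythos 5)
Your proposal is correct and takes essentially the same approach as the paper's proof: both establish the $V$-equivariance $\ell(\varepsilon\cdot x)=\widetilde\varepsilon\cdot\ell(x)$ and the injectivity of $\ell$ on a $V$-orbit, yielding a bijection $\Gamma_i\cap V\cdot x\leftrightarrow\gamma_i\cap\widetilde V\cdot\ell(x)$, from which the weighted counting identity and the uniform bound transfer verbatim. The only cosmetic difference is in the injectivity step: you deduce $\varepsilon=1$ by observing that equal conjugates place $\varepsilon$ in $\Q$ and then invoking total positivity of the unit, while the paper uses the norm relation $\prod_{i=1}^n\delta^{(i)}=1$ directly; both are valid.
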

\begin{proof}
For $x\in \R^n_+$, let us prove that $\ell$ induces a bijection between
$\Gamma_i\cap V\cdot x$ and $\gamma_i\cap
{\widetilde{V}}\cdot  \ell(x)$.
 Indeed, since
 $\ell(\varepsilon\cdot x)= \ell(\varepsilon)\cdot \ell(x)$, it is clear that $\ell$ maps
$\Gamma_i\cap V\cdot x$ surjectively onto  $\gamma_i\cap
{\widetilde{V}}\cdot
 \ell(x)$.
If  $\ell(\varepsilon\cdot x)=\ell(\varepsilon^\prime\cdot x)$
 for $\varepsilon,\varepsilon^\prime\in V$,
  then $\ell(\varepsilon^{-1}\varepsilon^\prime)
  =1_{n-1}:=(1,1,\dots,1)\in\R_+^{n-1}$.
  But $\ell\big((\delta^{(1)},\delta^{(2)},\dots,\delta^{(n)})\big)=1_{n-1}$ implies
 $ \delta^{(1)}=\delta^{(2)}=\cdots=\delta^{(n)}$.
 For $\delta\in\Ok$, this means $\delta=1$, as $\prod_{i=1}^n \delta^{(i)}=1.$ Hence $\ell$ is injective
 on $V\cdot x$ (for $x$ fixed). The lemma now follows directly from Definition \ref{vd} of
 a signed fundamental domain.\end{proof}

We shall apply the next lemma  to relate  a
cone in $\R^n_+$ with the polytope resulting from its
  intersection   with the hyperplane $x^{(n)}=1$.
\begin{lemma}\label{Projection} Suppose
$x,T_0,T_1,\dots,T_h\in\R^n$ all have non-zero last coordinate. If
$x=\sum_{i=0}^h c_i T_i$ with $ c_i\in\R$,
then
$ \ell(x)=\sum_{i=0}^h b_i\ell(T_i),$ where $ b_i =
 c_iT_i^{(n)}/x^{(n)},$  and $\sum_{i=0}^h b_i=1.$
Conversely, if
 $\ell(x)=\sum_{i=0}^h b_i\ell(T_i),$ where $\sum_{i=0}^h b_i=1$ and $b_i\in\R$,
 then  $  x=\sum_{i=0}^h c_i T_i,$ where  $ c_i=x^{(n)}b_i/T_i^{(n)}.$ In particular,
 if  $T_i^{(n)}>0\ \,(0\le i\le h)$ and $x^{(n)}>0$, then $c_i>0$ if and only if $b_i>0$.
\end{lemma}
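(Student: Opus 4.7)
The plan is to prove both implications by direct coordinate-by-coordinate computation; the lemma is essentially a bookkeeping statement about how a linear combination in $\R^n$ transforms under the rational map $\ell$, and there is no serious obstacle. The conceptual content is that $\ell$ is the composition ``rescale $x$ to have last coordinate $1$, then project off the last coordinate,'' so a linear relation in $\R^n$ becomes an affine relation in $\R^{n-1}$ after dividing out by the last coordinates.

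For the forward direction I would start from $x=\sum_{i=0}^h c_iT_i$ and read off the two types of coordinate equations. First, the last-coordinate equation $x^{(n)}=\sum_i c_iT_i^{(n)}$, divided through by $x^{(n)}\neq0$, gives $\sum_i b_i=1$ with $b_i:=c_iT_i^{(n)}/x^{(n)}$. Next, for $1\le j\le n-1$, divide $x^{(j)}=\sum_i c_iT_i^{(j)}$ by $x^{(n)}$ and insert $1=T_i^{(n)}/T_i^{(n)}$ to obtain
\[
\frac{x^{(j)}}{x^{(n)}}=\sum_{i=0}^h \frac{c_iT_i^{(n)}}{x^{(n)}}\cdot\frac{T_i^{(j)}}{T_i^{(n)}}=\sum_{i=0}^h b_i\,\ell(T_i)^{(j)},
\]
which is exactly $\ell(x)=\sum_i b_i\,\ell(T_i)$.

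For the converse I would run the same calculation in reverse. Given $\ell(x)=\sum_i b_i\ell(T_i)$ with $\sum_i b_i=1$, define $c_i:=x^{(n)}b_i/T_i^{(n)}$ and verify $x=\sum_i c_iT_i$ componentwise: the first $n-1$ coordinates recover $x^{(j)}=x^{(n)}\cdot\ell(x)^{(j)}$ using the assumed identity on $\ell(x)$, while the $n$-th coordinate $\sum_i c_iT_i^{(n)}=x^{(n)}\sum_i b_i=x^{(n)}$ uses the normalization $\sum_i b_i=1$. Finally, the sign assertion is immediate from the formula $b_i=c_iT_i^{(n)}/x^{(n)}$: when $T_i^{(n)}>0$ and $x^{(n)}>0$, the factor relating $b_i$ and $c_i$ is strictly positive, so $c_i>0\Longleftrightarrow b_i>0$.
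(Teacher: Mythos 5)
Your proof is correct and follows essentially the same direct coordinate computation as the paper's: both arguments divide the relation $x=\sum_i c_iT_i$ by $x^{(n)}$, insert $T_i^{(n)}/T_i^{(n)}$ to express things in terms of $\ell(T_i)$, and run the calculation in reverse for the converse. The paper packages the bookkeeping a bit more compactly via the identity $T=T^{(n)}\big(\ell(T),1\big)$, but the content is identical.
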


\begin{proof}
For $T\in\R^n$ with $T^{(n)}\not=0$, definition \eqref{ell}
of $\ell$ gives the obvious identity   $$
T=\big(T^{(1)},\dots,T^{(n)}\big)=
T^{(n)} \big(\ell(T),1\big).
$$
If $x= \sum_{i=0}^h c_i T_i$, then $ x^{(j)}=\sum_{i=0}^h c_i T_i^{(j)}$. Hence
\begin{align*}
\ell(x)=& \frac{1}{x^{(n)}} \bigg(\sum_{i=0}^h c_i T_i^{(1)},\sum_{i=0}^h c_i
T_i^{(2)},\dots,\sum_{i=0}^h c_i T_i^{(n-1)}
\bigg)
\\
  =&   \sum_{i=0}^h \frac{c_i T_i^{(n)}}{x^{(n)}}  \Big(
\frac{T_i^{(1)}}{T_i^{(n)}},\frac{T_i^{(2)}}{T_i^{(n)}},\dots,
\frac{T_i^{(n-1)}}{T_i^{(n)}} \Big)= \sum_{i=0}^h
\frac{c_iT_i^{(n)}}{x^{(n)}} \ell(T_i)=\sum_{i=0}^h b_i\ell(T_i).
\end{align*}
As $x^{(n)}=\sum_{i=0}^h c_i T_i^{(n)}$,
we have $\sum_i b_i=\sum_i  (c_iT_i^{(n)}/x^{(n)})=1$.

Conversely, if $\ell(x)=\sum_{i=0}^h b_i\ell(T_i)$ with   $\sum_{i=0}^h b_i=1$, then
\begin{align*}
x &=   x^{(n)} \big(\ell(x),1\big)=x^{(n)} \big(\sum_{i=0}^h
b_i\ell(T_i),1\big)=  x^{(n)}
\big(\sum_{i=0}^h b_i\ell(T_i),\sum_{i=0}^h b_i\big)\\
  &=\,  \sum_{i=0}^h x^{(n)}b_i \big(\ell(T_i),1\big)=
  \sum_{i=0}^h \frac{x^{(n)}b_i}{T_i^{(n)}}\,T_i^{(n)}
  \big(\ell(T_i),1\big)=\sum_{i=0}^h \frac{x^{(n)}b_i}{T_i^{(n)}}\,T_i=
  \sum_{i=0}^h c_i T_i.
\end{align*}
\end{proof}

The next lemma, on taking  $Q=\Q,\, R=\R$ and   $k$ a totally
 real number field,
  shows that a standard basis vector cannot line up with any
 face of a   $k$-rational cone, \ie $e_n\notin H_{i,\sigma}$
  for $1\le i\le n$ and $\sigma\in S_{n-1}$  as claimed after
  \eqref{Risigma}.

\begin{lemma}\label{envantage} Let $Q\subset  k\subset R$
be a tower of fields, with    $k/Q$ a finite separable extension.
 Let $v_1,v_2,\dots,v_\ell\in k$ with $\ell<n:=[k:Q]$, let
  $\tau_i:k\to R$ be the $n$ distinct field homomorphisms
  of $k$ into $R$ fixing $Q\ \,(1\le i\le n)$,  and
   define $J:k\to R^n$  by $\big(J(v)\big)^{(i)}:=\tau_i(v)$ for $v\in k$.
    Then $e_n:=[0,0,\dots,0,1]\in R^n$ is not contained in
     the $R$-subspace $R\cdot J(v_1) +R\cdot J(v_2)
    +\dots+R\cdot J(v_\ell)  \subset R^n$.
\end{lemma}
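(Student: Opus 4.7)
The plan is to construct an explicit $R$-linear functional on $R^n$ that annihilates each $J(v_j)$ but takes a nonzero value on $e_n$. The functional will come from the trace form on $k/Q$, with the crucial point being that the strict inequality $\ell<n$ leaves us one ``extra'' dimension to exploit.

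First, since $k/Q$ is finite separable, the trace form $(x,y)\mapsto \mathrm{Tr}_{k/Q}(xy)$ is a non-degenerate $Q$-bilinear form on $k$. The $Q$-subspace $W := Qv_1+\cdots+Qv_\ell \subset k$ satisfies $\dim_Q W\le \ell <n$, so its trace-orthogonal complement $W^\perp$ has positive $Q$-dimension. I would pick any nonzero $y\in W^\perp$, which gives $\mathrm{Tr}_{k/Q}(yv_j)=0$ for every $j$.

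Next, I would use the identity $\mathrm{Tr}_{k/Q}(u)=\sum_{i=1}^n \tau_i(u)$ together with the fact that each $\tau_i$ is a ring homomorphism to rewrite the previous vanishing as $\sum_{i=1}^n \tau_i(y)\tau_i(v_j)=0$ for every $j$. Equivalently, the vector $Y:=(\tau_1(y),\dots,\tau_n(y))\in R^n$ is orthogonal to every $J(v_j)$ under the standard symmetric $R$-bilinear pairing $\langle a,b\rangle:=\sum_{i=1}^n a^{(i)}b^{(i)}$.

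To close the argument, I would assume for contradiction that $e_n=\sum_{j=1}^\ell c_j J(v_j)$ with $c_j\in R$ and pair both sides with $Y$. The right-hand side pairs to $0$ by the previous step, while the left-hand side equals $\langle Y,e_n\rangle=\tau_n(y)$. Hence $\tau_n(y)=0$, contradicting $y\ne 0$ since $\tau_n$ is a field embedding and therefore injective. The essential input is the non-degeneracy of the trace form on a separable extension, which guarantees $W^\perp\ne 0$; I do not anticipate any genuine obstacle beyond invoking this fact, as the rest is straightforward linear algebra against the standard pairing.
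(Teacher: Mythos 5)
Your proof is correct and is essentially the paper's own argument: both use the non-degeneracy of the trace form on the separable extension $k/Q$ to produce a nonzero element (your $y$, the paper's $x$) trace-orthogonal to $v_1,\dots,v_\ell$, then pair $J$ of that element against $R^n$ via the standard dot product to get a functional killing each $J(v_j)$ while sending $e_n$ to the nonzero value $\tau_n(y)$. You simply spell out the $W^\perp$ step that the paper leaves implicit.
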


\begin{proof}
Since $k/Q$ is separable and $\ell<n$, there exists a nonzero $x\in k$ such that
Tr$_{k/Q}(xv_i)=0$ for $i=1,...,\ell$. Let $\psi:R^n\to R$ be the $R$-linear map given by
dot product with $J(x)$. Then $\psi(v_i)=\mathrm{Tr}_{k/Q}(xv_i)=0$,  whereas $\psi(e_n)=\tau_n(x)\not=0$.
Thus $e_n$ is not in the $R$-span of the $v_i$.
\end{proof}

We can now describe the simplices $c_\sigma$ that result from intersecting the cones
$C_\sigma$  with the hyperplane $x^{(n)}=1$.
 Let
\begin{align}\label{csigma}
 c_\sigma&:=   \Big\{y\in\R^{n-1}_+\big| y=
 \sum_{i=0}^{n-1}b_i\varphi_{i,\sigma},\ \,\sum_{i=0}^{n-1}b_i=1,
 \ \,b_i\in J_{i,\sigma}  \Big\}\quad\quad(\sigma\in S_{n-1},\ \,w_\sigma\not=0),\\
  \nonumber
& \varphi_{i,\sigma}:=\ell(f_{i+1,\sigma}),\quad \quad
J_{i,\sigma}:=
\begin{cases} [0,1]   &\mathrm{ if\ }  e_n\in H_{i+1,\sigma}^+,\\
 (0,1] &\mathrm{ if\ }  e_n\in H_{i+1,\sigma}^-,
 \end{cases}\qquad \quad \ (0\le i\le n-1).
 \end{align}
Note the annoying index shift between \eqref{Csigma} and \eqref{csigma},
 $\varphi_{i,\sigma}:=\ell(f_{i+1,\sigma})$.

The next result  restates Theorem \ref{Main} in
terms of the $c_\sigma$.
\begin{proposition}\label{conestosimplices} If
  $\big\{(c_\sigma,w_\sigma)\big\}_{w_\sigma\not=0}$
is a signed fundamental domain for the action of
$\widetilde{V}$ on $\R^{n-1}_+$
$\big($see \eqref{vtilde}$\big)$, then
 $\big\{(C_\sigma,w_\sigma)\big\}_{w_\sigma\not=0}$
is a signed fundamental domain for the action of $V$ on $\R^n_+$.
\end{proposition}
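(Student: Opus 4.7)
The plan is to apply Lemma \ref{loseone} directly, which requires showing that $C_\sigma = \Gamma_\sigma := \{x \in \R^n_+ : \ell(x) \in c_\sigma\}$ for each $\sigma$ with $w_\sigma \neq 0$. Once this identification is made, Lemma \ref{loseone} immediately promotes the signed fundamental domain $\{(c_\sigma, w_\sigma)\}$ for $\widetilde{V}$ on $\R^{n-1}_+$ to the signed fundamental domain $\{(C_\sigma, w_\sigma)\}$ for $V$ on $\R^n_+$. So the whole content of the proposition reduces to a bookkeeping identification of the cone $C_\sigma$ with the preimage under $\ell$ of the simplex $c_\sigma$.

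To carry this out, first fix $\sigma$ with $w_\sigma \neq 0$ and note that the condition $w_\sigma \neq 0$ forces $\det(f_{1,\sigma}, \dots, f_{n,\sigma}) \neq 0$, so $\{f_{i,\sigma}\}_{i=1}^n$ is an $\R$-basis of $\R^n$. Thus any $x \in \R^n_+$ can be uniquely written $x = \sum_{i=1}^n c_i f_{i,\sigma}$ with $c_i \in \R$. I would then invoke Lemma \ref{Projection} with $h = n-1$ and $T_j = f_{j+1,\sigma}$, which translates this expansion into $\ell(x) = \sum_{j=0}^{n-1} b_j \varphi_{j,\sigma}$ with $\sum_j b_j = 1$ and $b_j = c_{j+1} f_{j+1,\sigma}^{(n)}/x^{(n)}$. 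Because every $f_{j+1,\sigma} \in E_+ \subset \R^n_+$ has positive last coordinate and $x^{(n)} > 0$, the ratios in question are positive, so the signs agree: $b_j > 0 \iff c_{j+1} > 0$ and $b_j = 0 \iff c_{j+1} = 0$.

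The remaining step is to match the boundary conventions. By \eqref{Csigma}--\eqref{Risigma}, $x \in C_\sigma$ iff for each $i$ we have $c_i \ge 0$ (resp.\ $c_i > 0$) according as $e_n \in H_{i,\sigma}^+$ (resp.\ $e_n \in H_{i,\sigma}^-$). Under the index shift $i = j+1$ and the equivalence $c_{j+1} \ge 0 \iff b_j \ge 0$, $c_{j+1} > 0 \iff b_j > 0$, this translates exactly to $b_j \in J_{j,\sigma}$ as defined in \eqref{csigma}; the upper endpoint $b_j \le 1$ present in $J_{j,\sigma}$ is automatic from $b_j \ge 0$ together with $\sum_j b_j = 1$. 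Finally, $x \in \R^n_+$ guarantees $\ell(x) \in \R^{n-1}_+$ (and conversely, given $\ell(x) \in \R^{n-1}_+$ and $x^{(n)} > 0$, one has $x \in \R^n_+$), so the constraint $y \in \R^{n-1}_+$ imposed in the definition of $c_\sigma$ matches the ambient condition $x \in \R^n_+$. This gives $C_\sigma = \Gamma_\sigma$, and Lemma \ref{loseone} finishes the proof.

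The main obstacle is nothing deep — it is just the careful verification that the index shift $\varphi_{i,\sigma} = \ell(f_{i+1,\sigma})$ and the open/closed endpoint conventions in $\R_{i,\sigma}$ and $J_{i,\sigma}$ correspond correctly under the projective correspondence of Lemma \ref{Projection}. One should take care that Lemma \ref{envantage} is implicitly being used: it guarantees $e_n \notin H_{i,\sigma}$, so each face is unambiguously labeled $+$ or $-$, and the case split in the definitions of $\R_{i,\sigma}$ and $J_{i,\sigma}$ is exhaustive.
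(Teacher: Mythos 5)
Your proof is correct and follows essentially the same route as the paper's: invoke Lemma~\ref{loseone} after establishing $C_\sigma = \{x \in \R^n_+ : \ell(x) \in c_\sigma\}$ via the sign-preserving barycentric/cone-coordinate correspondence of Lemma~\ref{Projection}, with the index shift $\varphi_{i,\sigma} = \ell(f_{i+1,\sigma})$ and matching of the open/closed endpoint conventions handled as you describe. Your explicit remark that Lemma~\ref{envantage} underwrites the exhaustiveness of the $H_{i,\sigma}^{\pm}$ case split is a point the paper leaves implicit here (having noted it when $\R_{i,\sigma}$ was defined), but both arguments are the same in substance.
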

\begin{proof} Lemma \ref{loseone} shows that we must only prove $
C_\sigma=\big\{x\in\R^n_+\big|\,\ell(x)\in c_\sigma \big\}.$
So suppose $x\in C_\sigma$. Then
  $x= \sum_{i=1}^n c_i f_{i,\sigma}$,
where $c_i\ge0$ if $e_n\in H_{i,\sigma}^+,$
but $c_i>0$ if $ e_n\in H_{i,\sigma}^-$ $\big($see \eqref{Csigma} and \eqref{Risigma}$\big)$.
Note $f_{i,\sigma}^{(n)}>0\ \,(1\le i\le n)$ and $x^{(n)}>0$.
Lemma \ref{Projection} shows
$$
\ell(x) =\sum_{i=0}^{n-1} b_i\varphi_{i,\sigma},
\qquad\sum_{i=0}^{n-1} b_i=1,\qquad b_i =
 c_{i+1}f_{i+1,\sigma}^{(n)}/x^{(n)}\ge0\qquad(0\le i\le n-1),
$$
from which   it is clear that $b_i\le1$. Since $b_i=0$ is possible
 only if $c_{i+1}=0$, \ie $e_n\in H_{i+1,\sigma}^+$, we have $\ell(x)\in c_\sigma$.
 Thus, $C_\sigma\subset\big\{x\in\R^n_+\big|\,\ell(x)\in c_\sigma \big\}.$

To prove the reverse inclusion, suppose $x\in\R^n_+$ and
 $\ell(x)=\sum_{i=0}^{n-1}b_i\varphi_{i,\sigma}\in c_\sigma$.
Lemma  \ref{Projection} and \eqref{csigma} show
that $x=\sum_{i=1}^n c_i f_{i,\sigma}$,
with $c_{i}=b_{i-1}x^{(n)}/f_{i,\sigma}^{(n)}\ \,(1\le i\le n)$.
 Thus $c_i\ge0$, with equality possible only if
 $e_n\in H_{i,\sigma}^+$. Hence $x\in C_\sigma$, as claimed.
\end{proof}

\section{The piecewise affine map}\label{Affs}
In the previous section we reduced the proof of Theorem \ref{Main} to proving
that the  simplices $c_\sigma$ give a signed fundamental domain. After some
affine preliminaries, in this section we
interpret   $\bigcup_{\sigma\in S_{n-1}} \overline{c}_\sigma$  as the image $f([0,1]^{n-1})$ of a
 hypercube
 by a (continuous) piecewise
affine map. Each $\overline{c}_\sigma=f(D_\sigma)$
for a simplex $D_\sigma\subset [0,1]^{n-1}$.
Then we show that the difference between $c_\sigma$ and its closure $\overline{c}_\sigma$
 can be interpreted in terms of ``simplex piercing."

\subsection{Polytopes and affine maps}\label{PAF}
If $w_0,\dots,w_{r}$ are  elements of a real vector space $W$, the
 (closed) polytope they generate is the set   of convex sums
\begin{equation}\label{polytope}
P=P(w_0,\dots,w_r):=\Big\{ w\in W\big|\,w=\sum_{i=0}^{r} b_iw_i,\ \
b_i\ge0,\ \ \sum_{i=0}^{r}b_i=1,  \Big\}.
 \end{equation}
In general, if $w\in W$ and
\begin{equation}\label{barycoords}
w=\sum_{i=0}^{r} b_i w_i,\qquad \ b_i\in\R,  \qquad
\sum_{i=0}^{r}b_i=1 ,
 \end{equation}
 the $b_i$ are called barycentric coordinates of $w$ with
respect to $w_0,\dots,w_{r}$. They are uniquely determined  if and
only if the $r$
vectors $\big\{w_i-w_j\big\}_{\substack{0\le i \le r\vspace{.05cm}\\
i\not=j}}$ are $\R$-linearly independent   (for any fixed
 index $j\in\{0,1,\dots,r\}$). Then we call
$w_0,\dots,w_{r}$ affinely     independent and
$P=P(w_0,\dots,w_{r})$ an $r$-simplex with vertices $w_i$.
Vertices  are  uniquely determined
(up to re-ordering)  by
the $r$-simplex $P\subset W$.\footnote{\ Proof: The vertices $w_i$ are the only elements
$w\in P$ which cannot be written as $w=tv_1+(1-t)v_2$ with
$v_1,v_2\in P,\ v_1\not=v_2,\ 0<t<1$.} If $\dim(W)=r$ and the $r+1$ vertices of $W$ are affinely independent,
 we call them an affine basis
of $W$. In this case we   write $b_i(w) $
for  the $b_i$ in \eqref{barycoords}. Barycentric coordinates satisfy
\begin{equation}\label{baryconvex}
b_i\big((1-t)x+ty\big)=(1-t)b_i(x)+tb_i(y)\qquad\qquad(t\in\R,\ \,x,y\in
W,\ \,0\le i\le r).
\end{equation}

A  face   of a polytope $P=P(w_0,\dots,w_{r})$ for us is a subset
\begin{equation}\nonumber 
P_j:= 
\Big\{ w\in W\big|\,w=\sum_{\substack{0\le i\le r\vspace{.05cm}\\ i\not=j}} b_iw_i,\ \
b_i\ge0,\ \ \sum_{\substack{0\le i\le r\vspace{.05cm}\\ i\not=j}} b_i=1,  \Big\}.
 \end{equation}
 The affine subspace $h_j$ containing $P_j$ is
 \begin{equation}\label{affinesubspaces}
h_j:= 
\Big\{ w\in W\big|\,w=
\sum_{\substack{0\le i\le r\vspace{.05cm}\\ i\not=j}} b_iw_i,\ \
b_i\in\R,\ \ \sum_{\substack{0\le i\le r\vspace{.05cm}\\ i\not=j}} b_i=1,  \Big\}.
 \end{equation}

An affine map $A:W\to W^\prime$
between  real  vector spaces  has the form  $A(w)= q+L(w)$ for
 a unique   $q=A(0)\in W^\prime$ and a unique
linear map $L:W\to W^\prime$, called the linear part of $A$. If $w_0,\dots,w_{r}$ is an
affine basis of $W$ and $p_0,\dots,p_{r}$ are arbitrary elements of
$W^\prime$, there is   a unique affine map $A:W\to W^\prime$ such
that $A(w_i)=p_i$ for $0\le i\le r $. Indeed,   let $L$
  be the unique linear map such that
$L(w_i-w_0)=p_i-p_0$ for   $1\le i\le r$, and set $q=p_0-L(w_0)$.
Then $A(w)=q+L(w)$ is the required affine map. Its uniqueness is
clear.

If $w\in W$ has barycentric coordinates $b_i \ (0\le i\le r)$ with
respect to $w_0,\dots,w_{r}$, and $A:W\to W^\prime$ is an affine map
 with $A(w_i)=p_i\ (0\le i\le r)$, then the same $b_i$ are also
 barycentric coordinates   for $A(w)$ with respect to
$p_0,\dots,p_{r}$. They are the unique such coordinates if and only
if the $p_i$ are affinely independent, \ie if and only if the
associated linear map $L$ is injective.
We record this as
\begin{equation}\label{bary}
A(w_i)=p_i\ (0\le i\le r),\ w=\sum_{i=0}^{r} b_i(w)w_i,\
 \sum_{i=0}^{r}b_i(w)=1\ \ \Longrightarrow\ \ A(w)=\sum_{i=0}^{r}b_i(w) p_i,
\end{equation}
valid whenever the $w_i$ are an affine basis of $W$.
An affine map  $A:W\to W^\prime$ is bijective if and only if it takes an affine
basis of $W$ to an affine basis of $W^\prime$.

\subsection{The Colmez piecewise affine map}\label{cones}

 Let $C=\bigcup_i Q_i\subset
W$ be a finite union of    polytopes $Q_i$ inside a real vector space $W$.
If $W^\prime$ is also such a
  space, we will call a map $f:C\to W^\prime$  piecewise affine if $f$
restricted to each $Q_i$ is the restriction to $Q_i$
of an affine map $A_i:W\to W^\prime$.  Then, of course,
$A_i(x)=A_j(x)=f(x)$ for $x\in Q_i\cap Q_j$. Conversely,
given polytopes $Q_i\subset W$ and affine maps $A_i:W\to W^\prime$
with $A_i(x)=A_j(x)$ for $x\in Q_i\cap Q_j$, there is a
unique piecewise affine map $f:\bigcup_i Q_i\to W^\prime$
restricting to $A_i$ on each $Q_i$. We note that a piecewise affine map is
necessarily continuous.

We decompose the unit $(n-1)$-cube into  $(n-1)!$ simplices according to the order of
the coordinates, \ie
\begin{equation}\label{Cubedecomp}
I^{n-1}:=[0,1]^{n-1}=\bigcup_{\sigma\in S_{n-1}} D_\sigma,
 \end{equation}
 where for each permutation $\sigma$ of $\{1,\dots,n-1\}$ we set
\begin{equation}\label{Dsigma}
 D_\sigma:=\Big\{ x=\big(x^{(1)},\dots,x^{(n-1)}\big)
 \!\in I^{n-1} \big|\,  x^{(\sigma(1))}\ge x^{(\sigma(2))}
 \ge \cdots\ge x^{(\sigma(n-1))}\Big\}  .
 \end{equation}
Let   $e_i\in\R^{n-1} \ (1\le i\le n-1)$ be the
$i^{\mathrm{th}}$ standard basis vector,   so $e_i$ has a 1 in the
$i^{\mathrm{th}}$ coordinate and zeroes elsewhere. One checks that the $n$ vertices of
$D_\sigma$ are
\begin{equation}\label{phiisigma}
 \phi_{i,\sigma}:= \sum_{j=1}^{i}e_{\sigma(j)} \qquad\qquad\qquad (0\le i\le n-1,\ \,\phi_{0,\sigma}:=0),
\end{equation}
and that they are affinely independent.

 We return to the context of Theorem \ref{Main}.   Thus
$V=\langle\varepsilon_1,\dots,\varepsilon_{n-1}\rangle\subset\Ok$ is
a subgroup of finite index in the group of totally positive units of a
totally real field $k$ of degree $n$, thought of as embedded in $\R^n$.
Recall that we   defined in \eqref{ell} a map $\ell:\R^n-\big\{x^{(n)}=0\big\}\to\R^{n-1}$,
and that $\widetilde{V}:=\ell(V)\subset\R^{n-1}_+$ acts on $\R^{n-1}_+$ by component-wise
multiplication.

For $\sigma\in S_{n-1}$,  define  $A_\sigma:\R^{n-1}\to\R^{n-1}$ to
be the unique affine map such that
 \begin{equation}\label{Asigma}
A_\sigma(\phi_{i,\sigma}):=\varphi_{i,\sigma}
 \qquad \qquad(0\le i\le n-1),
\end{equation}
where   $\varphi_{i,\sigma}:=\ell(f_{i+1,\sigma})\in\R^{n-1}_+$, as in \eqref{csigma}.
There we only dealt with $\sigma$ such that $w_\sigma\not=0$, but here we will need to deal with
all $\sigma\in S_{n-1}$.

The next proposition shows that the  $A_\sigma$ can be glued together to get a piecewise
affine map $f$ on the unit hypercube.

\newpage

\begin{proposition}\label{f} There is a continuous map
$f:I^{n-1}\to\R^{n-1}_+$ with the following properties.
\begin{itemize}
\item[(i)] If $x\in D_\sigma$, then $f(x)=A_\sigma(x)$, the affine map defined in \eqref{Asigma}.
\item[(ii)] If $x\in I^{n-1}$ and $x+e_i\in I^{n-1}$  for some element
$e_i$ of the standard basis of $\R^{n-1}$, then
 $ f(x+e_i) = \widetilde{\varepsilon}_i\cdot f(x),$
 where $\widetilde{\varepsilon}_i:=\ell(\varepsilon_i)\ \,(1\le i\le n-1)$.
\item[(iii)] If $x=\sum_{i=1}^{n-1}b_i e_i$ is a vertex
of the cube $I^{n-1}$, then
$f(x)=\prod_{i=1}^{n-1}{\widetilde{\varepsilon}}_i^{\,b_i}$.
\end{itemize}
\end{proposition}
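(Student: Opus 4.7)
The plan is to define $f$ piecewise by $f|_{D_\sigma}:=A_\sigma|_{D_\sigma}$. Continuity on $I^{n-1}=\bigcup_\sigma D_\sigma$ will follow automatically (each $D_\sigma$ being closed and $f$ affine on it) once we verify the real content: that $A_\sigma(x)=A_{\sigma'}(x)$ whenever $x\in D_\sigma\cap D_{\sigma'}$. Positivity, $f(I^{n-1})\subset\R^{n-1}_+$, is then immediate since each $\varphi_{i,\sigma}\in\R^{n-1}_+$ and $A_\sigma(x)$ is a convex combination of such vectors.

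The main obstacle is well-definedness on the overlaps, and the key is to compute the barycentric coordinates $b_{j,\sigma}(x)$ of $x$ with respect to $\phi_{0,\sigma},\dots,\phi_{n-1,\sigma}$ explicitly. A short calculation, using $\phi_{i,\sigma}=\sum_{j\le i}e_{\sigma(j)}$, gives $b_{0,\sigma}(x)=1-x^{(\sigma(1))}$, $b_{j,\sigma}(x)=x^{(\sigma(j))}-x^{(\sigma(j+1))}$ for $1\le j\le n-2$, and $b_{n-1,\sigma}(x)=x^{(\sigma(n-1))}$. Two observations then combine: (a) these formulas depend only on the sorted list of coordinates of $x$, so $b_{j,\sigma}(x)=b_{j,\sigma'}(x)$ for any two $\sigma,\sigma'$ compatible with $x$; and (b) $b_{j,\sigma}(x)\ne 0$ forces a strict drop in the sorted order at position $j$, which uniquely determines $\{\sigma(1),\dots,\sigma(j)\}$ as the top-$j$ indices of $x$. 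Hence every other compatible $\sigma'$ satisfies $\{\sigma(1),\dots,\sigma(j)\}=\{\sigma'(1),\dots,\sigma'(j)\}$, and then commutativity of $E_+$ gives $\varphi_{j,\sigma}=\ell\bigl(\prod_{r\le j}\varepsilon_{\sigma(r)}\bigr)=\ell\bigl(\prod_{r\le j}\varepsilon_{\sigma'(r)}\bigr)=\varphi_{j,\sigma'}$. Summing $A_\sigma(x)=\sum_j b_{j,\sigma}(x)\,\varphi_{j,\sigma}$ only over the indices with $b_{j,\sigma}(x)\ne 0$ therefore produces the same value for $\sigma$ and $\sigma'$.

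Property (i) is the definition, and (iii) falls out at once: for $x=\sum_{i\in S}e_i$ pick any $\sigma$ listing the elements of $S$ first, so that $x=\phi_{|S|,\sigma}$ and $f(x)=\varphi_{|S|,\sigma}=\ell\bigl(\prod_{i\in S}\varepsilon_i\bigr)=\prod_{i\in S}\widetilde{\varepsilon}_i$ by the multiplicativity of $\ell$. For (ii) I would use a cyclic-shift trick: when $x,x+e_i\in I^{n-1}$ we must have $x^{(i)}=0$ and $(x+e_i)^{(i)}=1$, so one can pick $\sigma$ with $x\in D_\sigma$ and $\sigma(n-1)=i$, and then the permutation $\sigma'$ defined by $\sigma'(1):=i$ and $\sigma'(k):=\sigma(k-1)$ for $k\ge 2$ satisfies $x+e_i\in D_{\sigma'}$. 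The definitions give $\varphi_{k,\sigma'}=\widetilde{\varepsilon}_i\cdot\varphi_{k-1,\sigma}$ for $k\ge 1$ (again by commutativity of $E_+$), while the explicit barycentric formulas yield $b_{k,\sigma'}(x+e_i)=b_{k-1,\sigma}(x)$ for $k\ge 1$ and $b_{0,\sigma'}(x+e_i)=0$. Assembling these identities, and using $b_{n-1,\sigma}(x)=x^{(i)}=0$ to drop the top term in $A_\sigma(x)$, gives $f(x+e_i)=\widetilde{\varepsilon}_i\cdot f(x)$, completing the proof.
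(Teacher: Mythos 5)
Your proof is correct and follows the same overall plan as the paper's: define $f$ piecewise by $A_\sigma$ on $D_\sigma$, verify agreement on overlaps, and prove (ii) by the cyclic-shift trick $\tilde\sigma(1)=i$, $\tilde\sigma(j)=\sigma(j-1)$. The difference lies in the mechanics of the overlap check and of (iii). The paper shows the vertex value $A_\sigma(v)=\prod_m\widetilde{\varepsilon}_m^{\,v^{(m)}}$ is $\sigma$-independent, then appeals to the fact that $D_\sigma\cap D_\tau$ is a lower-dimensional simplex whose vertices are common vertices of $D_\sigma$ and $D_\tau$ (so that two affine maps agreeing there agree on the whole intersection). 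You instead write the barycentric coordinates explicitly as successive differences of the sorted coordinates, $b_{0,\sigma}(x)=1-x^{(\sigma(1))}$, $b_{j,\sigma}(x)=x^{(\sigma(j))}-x^{(\sigma(j+1))}$, $b_{n-1,\sigma}(x)=x^{(\sigma(n-1))}$, and observe that these values are sorting-invariant and that a nonzero $b_{j,\sigma}(x)$ pins down the set $\{\sigma(1),\dots,\sigma(j)\}$, hence $\varphi_{j,\sigma}$ by commutativity. This is a clean alternative that sidesteps the (unproved in the paper) claim about the simplicial structure of the intersections $D_\sigma\cap D_\tau$, at the cost of a small explicit computation. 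For (iii) you give a direct one-line argument via $x=\phi_{|S|,\sigma}$, whereas the paper deduces it by induction from (ii); your route is a bit shorter and stands independently. The explicit barycentric formulas also make your verification of (ii) slightly more concrete than the paper's, which asserts the shifted coordinates $\tilde b_j=b_{j-1}$ and leaves the check to the reader. Both proofs are sound.
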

\noindent Note that in (iii), $b_i=1$ or 0, and
 $\widetilde{\varepsilon}_i^{\,0}:=1=1_{n-1},$
 the identity of the ring $\R^{n-1}$.
\begin{proof}
Since $I^{n-1}=\bigcup_\sigma D_\sigma$, to prove the existence of a continuous $f$
satisfying (i) we
need to show that if $x\in D_\sigma \cap D_\tau$ for
$\sigma\not=\tau\in S_{n-1}$, then $A_\sigma(x)=A_\tau(x)$.
A  vertex $v=(v^{(1)},\dots,v^{(n-1)})=\phi_{i,\sigma}\in D_\sigma$
satisfies
\begin{equation}\label{vvertices1}
v^{(\sigma(j))}=\begin{cases}
                                1&\ \text{if }j\le i,\\
                                0&\ \text{otherwise.}
              \end{cases}
\end{equation}
In other words,  for $1\le m\le n-1$, we have $v^{(m)}=1$ if $m=\sigma(j)$
for some $j\le i$, but $v^{(m)}=0$ otherwise.
Hence
\begin{equation}\nonumber
A_\sigma(v)  = A_\sigma(\phi_{i,\sigma}):=\ell(f_{i+1,\sigma}):=
\ell\Big(\prod_{j=1}^i  \varepsilon_{\sigma(j)}\Big)=
\prod_{j=1}^i\ell(\varepsilon_{\sigma(j)}) =\prod_{j=1}^i\widetilde{\varepsilon}_{\sigma(j)}
=\prod_{m=1}^{n-1}\widetilde{\varepsilon}_{m}^{\,v^{(m)}}.
\end{equation}
As this last expression is independent of $\sigma$, we have $ A_\sigma(v) = A_\tau(v)$
if $v$ is a vertex of $D_\sigma$ and of $D_\tau$.
But  $P_{\sigma,\tau}:=D_\sigma\cap D_\tau$ is a $d$-simplex (for some $ 1\le d\le n-2$)
 whose $d+1$  vertices
 are also vertices of $D_\sigma$ and of $D_\tau$.  An affine map on
 a  $d$-simplex is uniquely determined
 by its values on  the $d+1$ vertices, so $A_\sigma(x)=A_\tau(x)$
 for all $x\in  P_{\sigma,\tau}:=D_\sigma\cap D_\tau$, proving (i).

To prove (ii),   suppose $x\in I^{n-1}$ and $x+e_i\in I^{n-1}$ for some $i$.
This implies $x^{(i)}=0$, so  $x\in D_\sigma$ for some
$\sigma\in S_{n-1}$ such that $\sigma({n-1})=i$
$\big($see \eqref{Dsigma}$\big)$. Write
$x=\sum_{j=0}^{n-1}b_j\phi_{j,\sigma}$ in the barycentric coordinates associated to
 $D_\sigma$, so $b_j\ge0$ and $\sum_{j=0}^{n-1}b_j=1$. Then
$b_{n-1}=0,$  for otherwise $x^{(i)}=x^{(\sigma(n-1))}>0$.
Notice that $x+e_i\in D_{\tilde{\sigma}}$,
where $\tilde{\sigma}\in S_{n-1}$ is given by
$$
\tilde{\sigma}(1)=i,\qquad\qquad
\tilde{\sigma}(j)=\sigma(j-1)\qquad(2\le j\le{n-1}).
$$
Hence,
\begin{equation}\label{otherside}
\phi_{j,\tilde{\sigma}}= e_i +
\phi_{j-1,\sigma},\qquad\qquad\varphi_{j,\tilde{\sigma}}
={\widetilde{\varepsilon}}_i
\varphi_{j-1,\sigma}\qquad (1\le j\le {n-1}).
\end{equation}
From this one checks that  the barycentric coordinates associated to
 $D_{\tilde{\sigma}}$ giving  $x+e_i=\sum_{j=0}^{n-1}\tilde{b}_j\phi_{j,\tilde{\sigma}}$
 are
$$
\tilde{b}_0=\,0,\qquad\tilde{b}_j=  \,b_{j-1} \qquad\qquad(1\le j\le {n-1}).
$$
By (i), we may use $A_{\tilde{\sigma}}$ to calculate
$f(x+e_i)$ and $A_\sigma $ for $f(x)$. From
\eqref{bary} and \eqref{otherside},
\begin{align*}
f(x+e_i)&=A_{\tilde{\sigma}}(x+e_i)=\sum_{j=0}^{n-1}
\tilde{b}_j\varphi_{j,\tilde{\sigma}}=\sum_{j=1}^{n-1}
\tilde{b}_j\varphi_{j,\tilde{\sigma}} =\sum_{j=1}^{n-1} b_{j-1}{\widetilde{\varepsilon}}_i\varphi_{j-1, \sigma
}\\
& ={\widetilde{\varepsilon}}_i\sum_{j=0}^{n-2} b_j\varphi_{j, \sigma }=
{\widetilde{\varepsilon}}_i\sum_{j=0}^{{n-1}} b_j\varphi_{j, \sigma
}={\widetilde{\varepsilon}}_i A_\sigma(x)={\widetilde{\varepsilon}}_i f(x),
\end{align*}
proving (ii).

Since $f(0)=f(\phi_{0,\sigma})=\varphi_{0,\sigma}=\ell(f_{1,\sigma})=\ell(1)=1$,
claim (iii) follows from (ii) by induction on the number of non-zero coordinates of the vertex.
\end{proof}

\subsection{Piercing}\label{Piercing}
We now make an {\it{ad hoc}} definition,    which we will later use to study the boundary of the
signed fundamental domain in Theorem \ref{Main}.
\begin{definition} \label{defpiercing}  Suppose  $ P\subset W$ is a subset of
 some finite-dimensional real
vector space $W$. For  $ x,y\in  W,$ we shall say that $\overrightarrow{x ,y} $
pierces $P$  if    $\,y\in P$
  and the closed line segment  $\overrightarrow{x ,y} $ connecting $x$
  and $y$ intersects the interior $\stackrel{\circ}{P}$ of $P$.
\end{definition}
\noindent
Note the asymmetry between the initial point $x$ and the final point $y$ in the above definition. The final
point must be in $P$, but the initial point need not be. If $x=y$, piercing
is equivalent to $y\in \ \stackrel{\circ}{P}$. In general, there obviously is
piercing   if  either $x$  or $y$ lie in $\stackrel{\circ} {P}$.
Of course, piercing cannot occur if $P$ has an empty interior.

A practical way of determining piercing for
 an $r$-simplex
is through the barycentric
coordinates $b_i(x)$ and $b_i(y)$.
\begin{lemma} \label{piercing} Let   $W$ be a real vector space
of dimension $r$, let $x\in W$ and let $y\in P=P(w_0,w_1,\dots, w_r)$,
 an $r$-simplex  in  $W$.
Then $\overrightarrow{x ,y} $ pierces $P$ if and
only if  $ b_i(x)>0$ whenever  $ b_i(y)=0\ \,(0\le i\le r).$  Moreover, if
 $ z$ lies in the interior $ \stackrel{\circ} {P} $ of $P$, then so do
all points of $\,\overrightarrow{ z ,y}$, except possibly for $y$.
\end{lemma}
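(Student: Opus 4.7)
The plan is to work entirely in barycentric coordinates and exploit their affine behaviour, formula \eqref{baryconvex}. Since $\dim W = r$ and the $r+1$ vertices $w_0,\dots,w_r$ are affinely independent, the map $W \to \R^{r+1}$, $w \mapsto (b_0(w),\dots,b_r(w))$, is an affine homeomorphism onto the hyperplane $\sum b_i = 1$; under this identification the simplex $P$ corresponds to the standard simplex and its interior $\stackrel{\circ}{P}$ corresponds to $\{b_i > 0\text{ for all }i\}$. I will parametrize the segment $\overrightarrow{x,y}$ as $z_t := (1-t)x + ty$, $t \in [0,1]$, so that by \eqref{baryconvex}
\begin{equation*}
b_i(z_t) = (1-t)\, b_i(x) + t\, b_i(y) \qquad (0 \le i \le r).
\end{equation*}

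I would first dispose of the ``moreover'' statement, since it is the cleanest and illustrates the mechanism. Assume $z \in \stackrel{\circ}{P}$, so $b_i(z) > 0$ for every $i$; for $y \in P$ one has $b_i(y) \ge 0$. The point $(1-t)z + ty$ then has barycentric coordinates $(1-t)b_i(z) + tb_i(y)$, which is a sum of two nonnegative numbers the first of which is strictly positive whenever $t < 1$. Hence every point of $\overrightarrow{z,y}$ other than possibly $y$ itself lies in $\stackrel{\circ}{P}$.

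For the main equivalence, I would prove the two implications separately. For the reverse direction, assume piercing, so there exists $t_0 \in [0,1]$ with $z_{t_0} \in \stackrel{\circ}{P}$, i.e. $(1-t_0)b_i(x) + t_0 b_i(y) > 0$ for all $i$. If $b_i(y) = 0$ for some index $i$, the inequality collapses to $(1-t_0)b_i(x) > 0$, which forces both $t_0 < 1$ and $b_i(x) > 0$, exactly the required condition. For the forward direction, assume that $b_i(x) > 0$ whenever $b_i(y) = 0$. Pick $t \in [0,1)$. When $b_i(y) = 0$, the formula above gives $b_i(z_t) = (1-t)b_i(x) > 0$ since $b_i(x) > 0$. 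When $b_i(y) > 0$, we have $b_i(z_t) = (1-t)b_i(x) + tb_i(y) \to b_i(y) > 0$ as $t \to 1^-$, so $b_i(z_t) > 0$ for all $t$ sufficiently close to $1$. Taking $t$ close enough to $1$ simultaneously for the (finitely many) indices, we obtain $b_i(z_t) > 0$ for all $i$, hence $z_t \in \stackrel{\circ}{P}$, and the segment $\overrightarrow{x,y}$ pierces $P$.

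There is no real obstacle here; the only mild care needed is in separating the indices with $b_i(y) = 0$ from those with $b_i(y) > 0$, and in observing that $t_0 = 1$ is incompatible with piercing when some $b_i(y) = 0$ (since then $b_i(z_{t_0}) = 0$, contradicting interiority). No further input is needed beyond the basic fact that the interior of an $r$-simplex in its $r$-dimensional ambient space is characterised by the strict positivity of all barycentric coordinates.
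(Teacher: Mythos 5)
Your proof is correct and follows essentially the same route as the paper's: both arguments work entirely in barycentric coordinates, use the affine identity \eqref{baryconvex} on the segment $(1-t)x+ty$, characterize $\stackrel{\circ}{P}$ by strict positivity of all $b_i$, and split the indices according to whether $b_i(y)=0$ or $b_i(y)>0$. The only cosmetic difference is ordering (you dispatch the ``moreover'' claim first and label the two directions of the equivalence oppositely), so there is nothing substantive to add.
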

\begin{proof}
The interior  is
\begin{equation}\label{Pint}
\stackrel{\circ}{P}\,:= \Big\{ w\in W\big|\,w=\sum_{i=0}^{r}
b_i(w)w_i,\ \ \sum_{i=0}^{r}b_i(w)=1,\ \ b_i(w)>0\ \,\mathrm{for}\ 0\le i\le
r \Big\}.
 \end{equation}
Since $y\in P$ by assumption, $b_j(y)\ge0$ for $0\le j\le r$. Assume
now that $\overrightarrow{x ,y} $ pierces $P$.
Then for some $t_0\in[0,1]$ and all $j\in \{0,1,\dots,r\}$,
\begin{equation*}
b_j\big((1-t_0)x+t_0y\big)=(1-t_0)b_j(x)+t_0b_j(y)>0,
\end{equation*}
where we used \eqref{baryconvex}. If $b_j(y)=0$,
the above implies $b_j(x)>0$, as desired.

 Conversely, assume
$b_i(y)=0$ implies $b_i(x)>0$. If   $b_j(y)>0$, then  for some
$t_j<1$ and all $t_j\le t\le 1 $, we have $b_j\big((1-t)x+ty\big)>0$.
  If $b_j(y)=0$, so $b_j(x)>0$,
$$b_j\big((1-t)x+ty\big)=(1-t)b_j(x)+tb_j(y)=(1-t)b_j(x)
>0\qquad\qquad(0\le t<1).$$
Taking $s:=\max\{t_j\}<1$, we have $b_j\big((1-s)x+sy\big)>0$ for
all $j\in\{0,1,\dots,r\}$. Thus $(1-s)x+sy\in\,
\overrightarrow{x ,y} \,\cap \!\stackrel{\circ} {P} $, as claimed.

To prove the last part of the lemma, suppose
 $ z\in\ \stackrel{\circ} {P} $,
so $b_j( z)>0$ for $j\in\{0,1,\dots,r\}$. Then, for $0\le t<1$,
$$
b_j\big((1-t) z+ty\big)=(1-t)b_j( z)+tb_j(y)
\ge (1-t)b_j( z)>0,
$$
showing that $(1-t) z+ty\in \ \stackrel{\circ} {P} $, as claimed.
\end{proof}

The next lemma is  similar, so we omit the proof.
\begin{lemma} \label{conepiercing} Let $v_1,\dots,v_r$ be a basis of
the real vector space $W,$ let
$\displaystyle{C:=\sum_{j=1}^r\R_{\ge0}\cdot v_j}$
be an $r$-cone,   $y=\sum_{j=1}^r y_j v_j\in C\ \,
(\text{\ie }  y_j\ge0)$ and $x=\sum_{j=1}^rx_jv_j\in W$. Then
 $\overrightarrow{x,y} $ pierces $C$ if and
only if  $ x_j >0$ whenever  $ y_j=0\ \,(1\le j\le r).$
\end{lemma}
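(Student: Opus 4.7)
The plan is to mirror the proof of Lemma~\ref{piercing} line by line, with the simplex barycentric coordinates $b_i(x),b_i(y)$ replaced by the linear coordinates $x_j,y_j$ in the basis $v_1,\dots,v_r$. The essential input is the elementary identification of the cone interior,
$$
\stackrel{\circ}{C}\ =\ \Big\{w=\sum_{j=1}^r w_j v_j\,\Big|\ w_j>0\ \text{ for }1\le j\le r\Big\},
$$
which is immediate because $(v_j)$ is a basis, together with the fact that the $j$-th coordinate of $(1-t)x+ty$ in this basis is the affine function $(1-t)x_j+ty_j$, exactly the analogue of \eqref{baryconvex}.

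For the forward direction, I would assume $\overrightarrow{x,y}$ pierces $C$ and pick $t_0\in[0,1]$ for which $(1-t_0)x_j+t_0y_j>0$ for every $j$. For any $j$ with $y_j=0$ this forces $(1-t_0)x_j>0$; such a $j$ can only exist if $y\notin\,\stackrel{\circ}{C}$, and then necessarily $t_0<1$, hence $x_j>0$. (When every $y_j>0$ the conclusion is vacuously true and piercing is trivial since $y$ itself is interior.)

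For the converse, I would produce an open sub-interval of $[0,1]$ on which every coordinate function is simultaneously positive: for each $j$ with $y_j>0$, the function $t\mapsto (1-t)x_j+ty_j$ takes the positive value $y_j$ at $t=1$, so by continuity there is $t_j<1$ with positivity on $(t_j,1]$; for each $j$ with $y_j=0$ (hence $x_j>0$ by hypothesis) the function equals $(1-t)x_j>0$ on $[0,1)$. Setting $s:=\max_j t_j<1$, every point $(1-t)x+ty$ with $t\in(s,1)$ has all basis coordinates positive and so lies in $\stackrel{\circ}{C}\cap\overrightarrow{x,y}$.

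There is really no hard step: the proof is formally identical to that of Lemma~\ref{piercing}, and the analogue of its ``moreover'' clause is not even stated here. The only thing to watch is the vacuous case $y\in\,\stackrel{\circ}{C}$, which is why the hypothesis is phrased as an implication ``$y_j=0\Rightarrow x_j>0$'' rather than a positivity condition on all $x_j$.
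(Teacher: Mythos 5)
Your proof is correct and is exactly the argument the paper intends: the paper explicitly omits the proof of Lemma~\ref{conepiercing} with the remark that it is ``similar'' to that of Lemma~\ref{piercing}, and your line-by-line transposition of barycentric coordinates to linear coordinates in the basis $(v_j)$ is the right way to fill that in. You even handle the one small point the paper glosses over in Lemma~\ref{piercing} (that the piercing parameter $t_0$ must be strictly less than $1$ when some coordinate of $y$ vanishes), so there is nothing to add.
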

\subsection{Piercing and the $c_\sigma$'s} With notation
as in Proposition \ref{f},
 let us define $\overline{c}_\sigma \subset\R^{n-1}_+$ as
   the (closed) polytope with vertices
   $\varphi_{i,\sigma}:=\ell(f_{i+1,\sigma})\ \,(0\le i\le n-1)$,
\begin{equation}\label{csigmaclosure}
 \overline{c}_\sigma:=P(\varphi_{0,\sigma},
 \varphi_{1,\sigma},\dots,\varphi_{n-1,\sigma})
  =f(D_\sigma)=A_\sigma(D_\sigma) \qquad \qquad
   \big(\sigma\in S_{n-1}\big).
 \end{equation}
Our notation is somewhat misleading.
We define the polytope $\overline{c}_\sigma$
 for all $\sigma\in S_{n-1}$. However, we defined
  $c_\sigma\subset\overline{c}_\sigma$
 only when $w_\sigma\not=0$  $\big($see \eqref{csigma} and \eqref{wsigma}$\big)$.
 It will prove convenient to define
 $c_\sigma$ to be empty when $w_\sigma=0$.

\begin{lemma}\label{nondegenerate}
The polytope  $\overline{c}_\sigma$ defined in \eqref{csigmaclosure}
 is an $(n-1)$-simplex (\ie
 its $n$  vertices   are affinely independent)    if and only if
  $w_\sigma\not=0$. 
The affine map $A_\sigma$ in \eqref{Asigma}
  is invertible if and only if   $w_\sigma\not=0$.
\end{lemma}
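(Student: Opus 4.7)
The plan is to reduce both claims to a single statement: the $n$ vectors $f_{1,\sigma},\dots,f_{n,\sigma}\in\R^n$ are linearly independent if and only if the $n$ points $\varphi_{0,\sigma},\dots,\varphi_{n-1,\sigma}\in\R^{n-1}$ are affinely independent. Once this is in hand, the lemma falls out from the definition of $w_\sigma$ and from the standard fact that an affine self-map of $\R^{n-1}$ is invertible iff it sends some (equivalently, every) affine basis to an affine basis.

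The cleanest route to the equivalence is a direct determinant calculation. Since each $f_{i,\sigma}\in\Ok$ is totally positive, $f_{i,\sigma}^{(n)}>0$, and definition \eqref{ell} of $\ell$ gives the identity
$$
f_{i,\sigma}=f_{i,\sigma}^{(n)}\bigl(\varphi_{i-1,\sigma},\,1\bigr)\in\R^{n-1}\times\R\qquad(1\le i\le n).
$$
I would form the $n\times n$ matrix with these as columns, pull out the positive scalar $f_{i,\sigma}^{(n)}$ from each column, and then subtract the first column from the remaining $n-1$ columns. Expanding along the last row (which then reads $1,0,\dots,0$) yields
$$
\det\bigl(f_{1,\sigma},\dots,f_{n,\sigma}\bigr)\,=\,\Bigl(\prod_{i=1}^n f_{i,\sigma}^{(n)}\Bigr)\cdot\det\bigl(\varphi_{1,\sigma}-\varphi_{0,\sigma},\,\dots,\,\varphi_{n-1,\sigma}-\varphi_{0,\sigma}\bigr).
$$
Because the prefactor is strictly positive, the left-hand determinant is nonzero iff the vectors $\varphi_{i,\sigma}-\varphi_{0,\sigma}$ ($1\le i\le n-1$) are linearly independent in $\R^{n-1}$, i.e.\ iff the $n$ points $\varphi_{0,\sigma},\dots,\varphi_{n-1,\sigma}$ form an affine basis of $\R^{n-1}$. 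This is exactly the condition that $\overline{c}_\sigma=P(\varphi_{0,\sigma},\dots,\varphi_{n-1,\sigma})$ is an $(n-1)$-simplex.

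To finish, I combine this with the definition of $w_\sigma$. The denominator of \eqref{wsigma} is the sign of the signed regulator of the independent units $\varepsilon_1,\dots,\varepsilon_{n-1}$, which is nonzero by hypothesis, so $w_\sigma\ne0$ iff $\det(f_{1,\sigma},\dots,f_{n,\sigma})\ne0$. By the displayed identity this is equivalent to the affine independence of the $\varphi_{i,\sigma}$, proving the first assertion. For the second, recall from \S\ref{PAF} that the vertices $\phi_{0,\sigma},\dots,\phi_{n-1,\sigma}$ of $D_\sigma$ are affinely independent, and that an affine map $\R^{n-1}\to\R^{n-1}$ is bijective iff its image of some affine basis is again an affine basis. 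Since $A_\sigma(\phi_{i,\sigma})=\varphi_{i,\sigma}$ by \eqref{Asigma}, invertibility of $A_\sigma$ is equivalent to the affine independence of the $\varphi_{i,\sigma}$, hence to $w_\sigma\ne0$.

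There is no real obstacle here; the only point that requires a little care is the column-pullout/row-expansion computation that converts an $n\times n$ determinant in $\R^n$ into an $(n-1)\times(n-1)$ determinant of difference vectors in $\R^{n-1}$. One could alternatively derive the equivalence from Lemma~\ref{Projection} by noting that it sets up, for each fixed $x$ with $x^{(n)}>0$, a bijection between representations $x=\sum c_i f_{i,\sigma}$ and affine representations $\ell(x)=\sum b_i\varphi_{i-1,\sigma}$ with $\sum b_i=1$; uniqueness on one side transfers to uniqueness on the other, giving the same conclusion.
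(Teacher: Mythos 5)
Your argument is correct, and it takes a genuinely different route from the paper. The paper reduces both assertions to the claim that vectors $T_0,\dots,T_h\in\R^n_+$ are linearly independent iff $\ell(T_0),\dots,\ell(T_h)\in\R^{n-1}_+$ are affinely independent, and proves this by invoking Lemma~\ref{Projection} to translate a linear dependence among the $T_i$ into a non-unique barycentric representation of some $\ell(T_j)$ (and conversely); this is precisely the ``alternative'' you sketch in your closing sentence. Your main line, by contrast, is a direct determinant computation exploiting the identity $f_{i,\sigma}=f_{i,\sigma}^{(n)}(\varphi_{i-1,\sigma},1)$, followed by column pull-out, column subtraction and cofactor expansion in the last row. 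Strictly speaking your displayed formula is missing the sign $(-1)^{n-1}$ that the cofactor expansion contributes (compare the paper's \eqref{trans} and its footnote), but since you only use the vanishing/non-vanishing of both sides, this does not affect the argument. One small advantage of the paper's approach is that it works uniformly in $h$ without any matrix bookkeeping; one advantage of yours is that it delivers, essentially for free, the relation between $\det(f_{1,\sigma},\dots,f_{n,\sigma})$ and the $(n-1)\times(n-1)$ determinant that the paper must establish separately as identity~\eqref{trans} in the proof of Lemma~\ref{goodcasedegree}. Your handling of the second assertion (invertibility of $A_\sigma$ via affine bases and \S\ref{PAF}) matches the paper exactly.
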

\begin{proof} It suffices to prove that   $T_0,\dots,T_h\in\R^n_+$
are linearly independent if and only if $\ell(T_0),\dots,
\ell(T_h)\in\R^{n-1}_+$
are affinely independent. So suppose     $\ell(T_0),
\dots,\ell(T_h)\in\R^{n-1}_+$
are not affinely independent. Then for some $v\in\R^{n-1}$,
$$
v=\sum_{i=0}^h b_i \ell(T_i)=\sum_{i=0}^h b_i^\prime \ell(T_i),
\qquad \sum_{i=0}^h b_i =1=\sum_{i=0}^h b_i^\prime,
\qquad b_j\not=b_j^\prime\ \, \text{for some }j .
$$
Taking $x:=(v,1)\in\R^n$, we have $\ell(x)=v$ and, by Lemma \ref{Projection},
$$
 \sum_{i=0}^h \big(b_i/T_i^{(n)}\big) T_i=x =\sum_{i=0}^h \big(b_i^\prime/T_i^{(n)}\big) T_i ,
$$
showing that  the $T_i$ are not linearly independent.

Conversely, if $0= \sum_{i=0}^h c_i  T_i$ with some $c_j\not=0$, then
$$
T_j=\sum_{i=0}^h \delta^j_i  T_i=\sum_{i=0}^h \big(c_i+\delta^j_i \big)
  T_i\qquad(\delta^j_i:=0\text{ if }i\not=j,\ \delta^j_j :=1).
$$
But then Lemma \ref{Projection} shows that $\ell(T_j)$ has
two distinct sets of barycentric coordinates with respect to $\ell(T_0),\dots, \ell(T_h)$.

The final statement in the lemma follows from the last line of  \S\ref{PAF}.
\end{proof}

When $w_\sigma\not=0$,
 we  defined  in \eqref{csigma}
a set $c_\sigma$ lying between $\overline{c}_\sigma$ and its interior, \ie
 $\stackrel{\circ}{\overline{c}}_\sigma\,\subset c_\sigma\subset\overline{c}_\sigma$. If $\overline{c}_\sigma$
 has no interior, \ie $w_\sigma=0$, we   defined $c_\sigma=\varnothing$, the empty set.
Our next aim  is to  describe $c_\sigma$ in terms of piercing.

\begin{lemma}\label{csigmareason} For $z\in\R^{n-1}_+$, we have
 $z\in c_\sigma$   if and only if
  $\overrightarrow{0,z}$ pierces
$\overline{c}_\sigma$.
\end{lemma}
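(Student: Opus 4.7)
The plan is to reduce everything to Lemma \ref{piercing} after identifying the barycentric coordinates of the origin with respect to the vertices of $\overline{c}_\sigma$. First I would dispose of the degenerate case: when $w_\sigma=0$, Lemma \ref{nondegenerate} says $\overline{c}_\sigma$ spans a proper affine subspace of $\R^{n-1}$ and therefore has empty interior, so no segment can pierce it; on the other hand we declared $c_\sigma:=\varnothing$ in this case, so both sides of the equivalence are vacuously false. I then assume $w_\sigma\ne0$, so $\overline{c}_\sigma$ is a genuine $(n-1)$-simplex in $\R^{n-1}$ and barycentric coordinates $b_i(w)$ with respect to $\varphi_{0,\sigma},\dots,\varphi_{n-1,\sigma}$ are defined for every $w\in\R^{n-1}$.

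Since both sides of the desired equivalence force $z\in\overline{c}_\sigma$, I may assume $z\in\overline{c}_\sigma$ and apply Lemma \ref{piercing} with $x=0$, $y=z$: $\overrightarrow{0,z}$ pierces $\overline{c}_\sigma$ iff $b_i(0)>0$ whenever $b_i(z)=0$. On the other hand, directly from \eqref{csigma}, $z\in c_\sigma$ iff $b_i(z)>0$ whenever $e_n\in H_{i+1,\sigma}^-$. So the whole proof reduces to the dictionary
$$
b_i(0)>0\iff e_n\in H_{i+1,\sigma}^+,\qquad b_i(0)<0\iff e_n\in H_{i+1,\sigma}^-\qquad(0\le i\le n-1).
$$

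To establish this dictionary, I would compute $b_i(0)$ via the converse half of Lemma \ref{Projection}, taking $x:=e_n\in\R^n$ (so $\ell(x)=0$ and $x^{(n)}=1$) and $T_i:=f_{i+1,\sigma}$ (so $\ell(T_i)=\varphi_{i,\sigma}$). Writing $0=\sum_{i=0}^{n-1} b_i(0)\varphi_{i,\sigma}$ with $\sum b_i(0)=1$, the lemma gives
$$
e_n=\sum_{j=1}^{n} c_j\,f_{j,\sigma},\qquad c_j:=\frac{b_{j-1}(0)}{f_{j,\sigma}^{(n)}},
$$
and since $f_{j,\sigma}^{(n)}>0$ the signs of $c_j$ and $b_{j-1}(0)$ coincide. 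By footnote \ref{Hfootie}, $c_j>0\iff e_n\in H_{j,\sigma}^+$ and $c_j<0\iff e_n\in H_{j,\sigma}^-$, while Lemma \ref{envantage} ensures $e_n\notin H_{j,\sigma}$ and so $c_j\ne0$. Re-indexing $j=i+1$ yields the dictionary; the equivalence of (P) and (C) then follows by a one-line contrapositive (using $b_i(z)\ge0$ on $\overline{c}_\sigma$). The only real obstacle I anticipate is pure bookkeeping: remembering the index shift $\varphi_{i,\sigma}=\ell(f_{i+1,\sigma})$ and invoking Lemma \ref{envantage} at the exact moment the contrapositive needs to upgrade ``$b_i(0)\le0$'' to ``$b_i(0)<0$''.
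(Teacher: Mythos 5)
Your proposal is correct and follows essentially the same route as the paper: dispose of the degenerate case $w_\sigma=0$, reduce the piercing condition to barycentric coordinates via Lemma \ref{piercing}, then compute the signs of $b_i(0)$ from the coefficients of $e_n$ in the $f_{j,\sigma}$-basis using Lemma \ref{Projection}, the characterization of $H_{j,\sigma}^\pm$ in footnote \ref{Hfootie}, and Lemma \ref{envantage} to rule out $e_n\in H_{j,\sigma}$. The only cosmetic difference is your contrapositive phrasing of the membership criterion for $c_\sigma$.
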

\begin{proof} If $z\notin  \overline{c}_\sigma$, then by definition  $\overrightarrow{0,z}$ does
not pierce $\overline{c}_\sigma$. As $c_\sigma\subset  \overline{c}_\sigma$, the lemma is clear in this case.
 If $w_\sigma=0$, there cannot be piercing as $\overline{c}_\sigma$
has an empty interior. Since  $c_\sigma=\varnothing$ when $w_\sigma=0$,
the lemma is also obvious in this case. Thus we may assume
 $z\in \overline{c}_\sigma$ and
 $w_\sigma\not=0$. We can then write $e_n:=[0,\dots,0,1]\in\R^n$
in the basis $f_{1,\sigma},\dots,f_{n,\sigma}$ of $\R^n$
as $e_n=\sum_{i=1}^n c_i(e_n) f_{i,\sigma},\ \,c_i(e_n)\in\R$.
 We have $e_n\in H_{i,\sigma}^+$ if and only if $c_i(e_n)>0$
(see footnote  \ref{Hfootie}).  Lemma \ref{Projection},
applied to $0=\ell(e_n)$, shows that
the barycentric coordinate $b_i(0)$ of 0 with
respect to the affine basis
 $\varphi_{0,\sigma},\dots,\varphi_{n-1,\sigma}$
 has the same sign as $c_{i+1}(e_n)\ \,(0\le i\le n-1)$.
 Thus $e_n\in H_{i+1,\sigma}^+$ if and only if $b_i(0)>0$.

Write $z\in \overline{c}_\sigma$ as
$z=\sum_{i=0}^{n-1}b_i(z)\varphi_{i,\sigma},\
b_i(z)\ge0,\ \sum_{i=0}^{n-1}b_i(z)=1$.
Suppose $z\in c_\sigma$. By  definition of
$c_\sigma\ \big($see \eqref{csigma}$\big)$,
 if $b_i(z)=0$, then $e_n\in H_{i+1,\sigma}^+$, \ie
    $b_{i}(0)>0$.    Lemma   \ref{piercing} now shows that
     $\overrightarrow{0,z}$ pierces  $\overline{c}_\sigma$.

      Conversely, if
$\overrightarrow{0,z}$ pierces  $\overline{c}_\sigma$, Lemma  \ref{piercing} shows
that if $b_i(z)=0$, then
$b_{i}(0)>0$, \ie $e_n\in H_{i+1,\sigma}^+$. Thus $z\in c_\sigma$.
\end{proof}

The next result
 justifies our description in \S \ref{Introduction}
of the cone $C_\sigma$ $\big($see \eqref{Csigma}$\big)$
in terms of piercing the closed cone
$\overline{C}_\sigma:=\sum_{i=1}^n\R_{\ge0}\cdot f_{i,\sigma}$ by a
line segment  from $e_n$.

\begin{lemma}\label{Csigmareason}  For $x\in\R^n_+$, we have  $x\in C_\sigma$ if and only if
$\overrightarrow{e_n,x}$ pierces $\overline{C}_\sigma$.
\end{lemma}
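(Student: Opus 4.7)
The plan is to reduce this directly to the abstract cone-piercing criterion already established in Lemma \ref{conepiercing}. Since the cone $C_\sigma$ is only defined when $w_\sigma\not=0$ (the bounding hyperplanes $H_{i,\sigma}$ require this), I may assume $w_\sigma\not=0$. The definition \eqref{wsigma} of $w_\sigma$ then guarantees that $\det(f_{1,\sigma},\dots,f_{n,\sigma})\not=0$, so the vectors $f_{1,\sigma},\dots,f_{n,\sigma}$ form an $\R$-basis of $\R^n$, which is the setup needed for Lemma \ref{conepiercing} applied to $\overline{C}_\sigma=\sum_{i=1}^n\R_{\ge0}\cdot f_{i,\sigma}$.

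First I would expand both $e_n$ and $x$ in this basis, writing $e_n=\sum_{i=1}^n c_i(e_n)f_{i,\sigma}$ and $x=\sum_{i=1}^n c_i(x)f_{i,\sigma}$ with $c_i(e_n),c_i(x)\in\R$. Membership $x\in\overline{C}_\sigma$ is then equivalent to $c_i(x)\ge0$ for all $i$. By footnote \ref{Hfootie}, $e_n\in H_{i,\sigma}^+$ is equivalent to $c_i(e_n)>0$, and $e_n\in H_{i,\sigma}^-$ to $c_i(e_n)<0$. Lemma \ref{envantage} guarantees $e_n\notin H_{i,\sigma}$ for each $i$, so $c_i(e_n)\not=0$ always, and every index falls into exactly one of these two cases.

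Next I would apply Lemma \ref{conepiercing} with $v_j:=f_{j,\sigma}$, $y:=x$ and initial point $e_n$. It says $\overrightarrow{e_n,x}$ pierces $\overline{C}_\sigma$ iff $x\in\overline{C}_\sigma$ and moreover $c_i(e_n)>0$ whenever $c_i(x)=0$. Equivalently (by the previous paragraph), $x\in\overline{C}_\sigma$ and whenever $e_n\in H_{i,\sigma}^-$ one has $c_i(x)\not=0$, i.e.\ $c_i(x)>0$. Consulting \eqref{Csigma}--\eqref{Risigma}, this is precisely the condition $c_i(x)\in\R_{i,\sigma}$ for every $i$, i.e.\ $x\in C_\sigma$.

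The main (and only) subtlety is to justify applying Lemma \ref{conepiercing} when $x$ might not a priori belong to $\overline{C}_\sigma$: if $x\notin\overline{C}_\sigma$ then by Definition \ref{defpiercing} the segment $\overrightarrow{e_n,x}$ does not pierce $\overline{C}_\sigma$ (the endpoint $x$ is not in $\overline{C}_\sigma$), and also $x\notin C_\sigma\subset\overline{C}_\sigma$, so both sides of the asserted equivalence are false. I would dispatch this case at the outset, after which the argument above gives the equivalence cleanly. No further obstacle arises — the lemma is essentially a dictionary translation between the hyperplane-sign language used to define $C_\sigma$ and the barycentric/coefficient-sign language used to characterize piercing.
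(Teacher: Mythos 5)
Your proof is correct and follows essentially the same route as the paper's: dispatch the trivial case $x\notin\overline{C}_\sigma$ (the paper also dispatches $w_\sigma=0$ this way), then expand $e_n$ and $x$ in the basis $\{f_{i,\sigma}\}$, use footnote \ref{Hfootie} to translate between the sign of $c_i(e_n)$ and membership in $H_{i,\sigma}^\pm$, and invoke Lemma \ref{conepiercing}. The only difference is cosmetic: the paper splits the argument into two explicit implications, while you phrase it as a single chain of equivalences.
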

\begin{proof} As in the previous lemma, the cases  $w_\sigma=0$ or $x\notin\overline{C}_\sigma$ are trivial.  When $w_\sigma\not=0$, we can
write $e_n=\sum_{i=1}^n c_i(e_n)f_{i,\sigma}$, and
$x=\sum_{i=1}^n c_i(x)f_{i,\sigma}$. But $c_i(x)\ge0$, as $x\in\overline{C}_\sigma.$
Suppose $x\in C_\sigma$. Then, by \eqref{Csigma},
 $c_i(x)=0$ implies $e_n\in H_{i,\sigma}^+,$ \ie $c_i(e_n)>0$.
 Lemma \ref{conepiercing} shows then that
$\overrightarrow{e_n,x}$ pierces $\overline{C}_\sigma$.
Conversely, assume $\overrightarrow{e_n,x}$ pierces $\overline{C}_\sigma$.
Then, by Lemma \ref{conepiercing}, $c_i(x)=0$ implies
$c_i(e_n)>0,$ \ie $e_n\in H_{i,\sigma}^+$. But then $x\in C_\sigma$.
\end{proof}

The affine subspaces $h_{i,\sigma}$
 extending faces  of the polytope $\overline{c}_\sigma
 =f(D_\sigma)$ are
  \begin{equation}\label{defhisigma}
h_{i,\sigma}:=\Big\{\sum_{\substack{0\le j\le n-1\\
j\not=i}}b_j\varphi_{j,\sigma} \in\R^{n-1}\Big|\,b_j\in\R,
\ \sum_{\substack{0\le j\le n-1\\ j\not=i}}b_j =1 \Big\}
\qquad(0\le i\le n-1).
 \end{equation}
 We show next that none of these affine subspaces contains 0.
\begin{lemma}\label{goodpoint} The origin of $\R^{n-1}$ does not lie on any
   $h_{i,\sigma}$ $\,(0\le i\le n-1,\ \,\sigma\in S_{n-1})$.
\end{lemma}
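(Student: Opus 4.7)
The plan is to derive a contradiction by transferring the hypothesized affine relation for $0\in\R^{n-1}$ up to a linear relation in $\R^n$ involving $e_n$, and then invoking Lemma \ref{envantage} to exclude that linear relation.

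First I would suppose $0\in h_{i,\sigma}$ for some $i$ and $\sigma$. By definition \eqref{defhisigma}, this gives real numbers $b_j$ with $\sum_{j\neq i}b_j=1$ such that $0=\sum_{j\neq i}b_j\varphi_{j,\sigma}$. Recalling from \eqref{csigma} that $\varphi_{j,\sigma}=\ell(f_{j+1,\sigma})$, and noting that $\ell(e_n)=0$ with $e_n^{(n)}=1\neq 0$ and $f_{j+1,\sigma}^{(n)}>0$ (since $f_{j+1,\sigma}\in E_+\subset\R^n_+$), the converse direction of Lemma \ref{Projection}, applied with $x=e_n$ and $T_j=f_{j+1,\sigma}$ for $j\neq i$, produces scalars $c_j\in\R$ so that
\[
e_n=\sum_{j\neq i}c_j\,f_{j+1,\sigma}.
\]
Reindexing $m=j+1$, this places $e_n$ in the $\R$-linear span of $\{f_{m,\sigma}\}_{m\in\{1,\dots,n\},\,m\neq i+1}$.

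The final step is to observe that this span is precisely the hyperplane $H_{i+1,\sigma}$ appearing in the paper's set-up. Since the $f_{m,\sigma}$ lie in $k$ (they are products of units in $\Ok\subset k$), the embedding $J:k\to\R^n$ from Lemma \ref{envantage} agrees with our identification $k\subset\R^n$, and we are exhibiting $e_n$ in the $\R$-span of $n-1<n$ elements of $J(k)$. Lemma \ref{envantage} (with $Q=\Q$, $R=\R$) forbids this, giving the desired contradiction.

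The argument is short because the two ingredients are already in hand: Lemma \ref{Projection} provides the bridge between affine combinations at the level of $\ell$ (i.e.\ in $\R^{n-1}$) and linear combinations upstairs (in $\R^n$), while Lemma \ref{envantage} guarantees the non-degeneracy of $e_n$ with respect to any $(n-1)$-dimensional subspace spanned by elements of $k$. The only point requiring a moment's care is checking the hypothesis of Lemma \ref{Projection}, namely that all the vectors involved have non-zero final coordinate; this is immediate from total positivity of the $f_{m,\sigma}$.
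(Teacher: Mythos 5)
Your argument is correct and is essentially the paper's own proof: both pass from the affine relation $0=\sum_{j\neq i}b_j\varphi_{j,\sigma}$ to a linear relation $e_n=\sum_{j\neq i}c_jf_{j+1,\sigma}$ via the converse direction of Lemma \ref{Projection} (using $\ell(e_n)=0$), and then invoke Lemma \ref{envantage} to rule this out. The only cosmetic difference is that you spell out the verification of the nonzero-last-coordinate hypothesis, which the paper leaves implicit.
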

\begin{proof}
Suppose otherwise. Then for some $i$ and $\sigma$ we have
$$
0=\sum_{\substack{0\le j\le n-1\\ j\not=i}}b_j\varphi_{j,\sigma}
     \qquad\qquad\Big(b_j\in\R,\ \,
     \sum_{\substack{0\le j\le n-1\\ j\not=i}} b_j=1\Big).
$$
Since  $\varphi_{j,\sigma}:=\ell(f_{j+1,\sigma})\ \,(0\le j\le n-1)$
 and $0=\ell(e_n)$, Lemma \ref{Projection}   applied to $x=e_n$
 and $T_j=f_{j+1,\sigma}\ (j\not=i)$,  shows
$$
e_n=\sum_{\substack{0\le j\le n-1\\ j\not=i}} c_j f_{j+1,\sigma}\qquad(c_j\in\R).
$$
This contradicts Lemma \ref{envantage}.
\end{proof}

\section{Maps between tori}
We will show  that $\R_+^{n-1}/{\widetilde{V}}$ is
homeomorphic to an $(n-1)$-torus and that the piecewise affine map
defined in Proposition \ref{f} descends to a map $F$ between  $(n-1)$-tori.
We then show that $F$ is homotopic to a homeomorphism $F_0$.

 To distinguish domains   we let
\begin{align}\label{LOGG}
&\LOG: \R^{n-1}_+\to \R^{n-1},\qquad \quad
\Big(\mathrm{LOG}\,\,x\Big)^{\!(i)}=
\log \,x^{(i)} \qquad (1\le i\le n-1),\\
\label{Logg}
&\Log: \R^{n}_+\to \R^{n-1},\qquad\qquad
\ \ \Big(\mathrm{Log}\,\,x\Big)^{\!(i)}=
\log\, x^{(i)} \qquad (1\le i\le n-1).
\end{align}
As in Theorem \ref{Main}, we assume given independent totally positive
 units $\varepsilon_1,\dots,\varepsilon_{n-1}$
in a totally real number field $k$ of degree $n\ge2$. We let
 ${\widetilde{\varepsilon}}_i=\ell(\varepsilon_i)\in\R^{n-1}_+\ \,\big($see \eqref{ell}$\big)$.

 We now relate the signed regulator of the $\varepsilon_i$
 to that of the ${\widetilde{\varepsilon}}_i$.
\begin{lemma}\label{REGS}
\begin{equation} \label{reg}
\det\!\big(\mathrm{LOG}  \,\,{\widetilde{\varepsilon}}_1,\dots, \mathrm{LOG}
\,\,{\widetilde{\varepsilon}}_{n-1} \big)=n\det\!\big(\mathrm{Log}\,\,\varepsilon_1,
\dots,\mathrm{Log}\,\,\varepsilon_{n-1}\big),
\end{equation}
where $\det(v_1,v_2,\dots,v_{n-1}) $ is the determinant of the $(n-1)\times
(n-1)$ matrix  having columns $v_i\in\R^{n-1}$.
In particular, neither of the above $(n-1)\times(n-1)$ determinants
 vanishes,   both have the same sign, and
 $\Lambda:=\sum_{i=1}^{n-1}\Z\cdot\LOG\,\,{\widetilde{\varepsilon}}_i \subset\R^{n-1}$ is a full lattice.
\end{lemma}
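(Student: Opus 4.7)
The plan is a direct computation reducing \eqref{reg} to the determinant of a simple rank-one perturbation of the identity.

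First I would write down the entries of $\LOG\,\widetilde{\varepsilon}_i$ explicitly. From the definition $\widetilde{\varepsilon}_i := \ell(\varepsilon_i)$ in \eqref{vtilde} and \eqref{ell}, we have, for $1 \le j \le n-1$,
$$
\bigl(\LOG\,\widetilde{\varepsilon}_i\bigr)^{(j)} = \log\!\bigl(\varepsilon_i^{(j)}/\varepsilon_i^{(n)}\bigr) = \bigl(\Log\,\varepsilon_i\bigr)^{(j)} - \log\varepsilon_i^{(n)}.
$$
Because $\varepsilon_i$ is a totally positive unit, $\prod_{k=1}^n \varepsilon_i^{(k)} = 1$, so $\log\varepsilon_i^{(n)} = -\sum_{k=1}^{n-1}\bigl(\Log\,\varepsilon_i\bigr)^{(k)}$. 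Substituting,
$$
\bigl(\LOG\,\widetilde{\varepsilon}_i\bigr)^{(j)} = \bigl(\Log\,\varepsilon_i\bigr)^{(j)} + \sum_{k=1}^{n-1}\bigl(\Log\,\varepsilon_i\bigr)^{(k)} \qquad(1\le j\le n-1).
$$

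Next I would recast this as a matrix identity. Let $A$ be the $(n-1)\times(n-1)$ matrix whose $i$-th column is $\Log\,\varepsilon_i$, and let $B$ be the $(n-1)\times(n-1)$ matrix whose $i$-th column is $\LOG\,\widetilde{\varepsilon}_i$. The display above says $B_{ji} = A_{ji} + \sum_{k} A_{ki}$, i.e.
$$
B = (I_{n-1} + \mathbf{1}\mathbf{1}^{T})\,A,
$$
where $\mathbf{1}\in\R^{n-1}$ is the column vector of all ones.

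Now I would compute $\det(I_{n-1} + \mathbf{1}\mathbf{1}^{T})$. This is a rank-one update of the identity, so by the matrix determinant lemma (or by noting that the nonzero eigenvalue of $\mathbf{1}\mathbf{1}^T$ is $n-1$), $\det(I_{n-1} + \mathbf{1}\mathbf{1}^{T}) = 1 + \mathbf{1}^T\mathbf{1} = n$. Taking determinants in the previous display gives \eqref{reg}.

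For the trailing assertions: the independence of $\varepsilon_1,\dots,\varepsilon_{n-1}$ in $E_+$, together with Dirichlet's unit theorem, forces $\det A \ne 0$, so by \eqref{reg} the two determinants are both nonzero and, since $n > 0$, have the same sign. Finally, $\Lambda := \sum_{i=1}^{n-1}\Z\cdot\LOG\,\widetilde{\varepsilon}_i$ is generated by the columns of $B$, which are $\R$-linearly independent (as $\det B \ne 0$), so $\Lambda$ is a full lattice in $\R^{n-1}$. There is no real obstacle here; the only mild point of care is keeping track of the column/row conventions when extracting the all-ones matrix from the identity for $B_{ji}$.
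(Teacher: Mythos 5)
Your proof is correct and follows essentially the same route as the paper: both use $\prod_k\varepsilon_i^{(k)}=1$ to rewrite $\LOG\,\widetilde{\varepsilon}_i$ in terms of $\Log\,\varepsilon_i$ and reduce \eqref{reg} to the identity $\det(I_{n-1}+B_{n-1})=n$ with $B_{n-1}$ the all-ones matrix. The paper evaluates this determinant via the eigenvalues $0$ and $n-1$ of $B_{n-1}$, whereas you invoke the matrix determinant lemma, which is just a different way of stating the same rank-one computation.
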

\begin{proof}
This is proved in \cite{DF}, but we repeat the proof here for completeness.
Using $ 1/\varepsilon_i^{(n)}= \prod_{j=1}^{n-1} \varepsilon_i^{(j)}$,
\eqref{reg} reduces to showing $ n=\det\big(I_{n-1}+
B_{n-1}\big), $ where the $(n-1)\times(n-1)$ matrices $I_{n-1}$ and
$B_{n-1}$ are, respectively, the identity and the matrix   whose
entries are all 1. But $\det\!\big(\lambda
I_{n-1}-B_{n-1}\big)=\lambda^{n-2}\big(\lambda-(n-1)\big)$, using
the obvious eigenvalues  $0$ and $n-1$ of $B_{n-1}$.
 Substituting $\lambda=-1$ concludes the
proof.
\end{proof}

By Proposition \ref{f}$\,$(iii), the map $f:I^{n-1}\to\R^{n-1}_+$  satisfies
$f(\sum_i b_ie_i)=\prod_i{\widetilde{\varepsilon}}_i^{\,b_i}$  on the vertices of the
hypercube, \ie when $b_i=0$ or 1 for all $i$. There is another map
$f_0:I^{n-1}\to\R^{n-1}_+$ that   trivially satisfies this on all of
$I^{n-1}$,
\begin{equation}\label{ff}
\big(f_0(x)\big)^{(j)}:=\prod_{i=1}^{n-1}
\big({\widetilde{\varepsilon}}_i^{(j)}\big)^{\,b_i}\qquad\qquad
 \Big(1\le j\le n-1,\ \,x= \sum_{i=1}^{n-1}
b_ie_i,\ \,0\le b_i\le1\Big).
\end{equation}
The map $f_0$  also satisfies (ii) of Proposition
\ref{f}, \ie
\begin{equation}\label{fff}f_0(x+e_i)={\widetilde{\varepsilon}}_if_0(x)\qquad\qquad\qquad
\big(  x\in I^{n-1}\ \mathrm{and}\   x+e_i\in I^{n-1}\big).
\end{equation}
On taking LOG
it is clear from Lemma \ref{REGS} that $f_0$ is the restriction to
$I^{n-1}$ of a homeomorphism between $\R^{n-1}$ and $\R^{n-1}_+$ (given by \eqref{ff}, but with $b_i\in\R$).

  Let
\begin{equation}\label{THat}
\widehat T\,:=\,I^{n-1}\!/\!\!\sim\ \,\simeq\ \R^{n-1}/\Z^{n-1}
\end{equation}
be the quotient space of $I^{n-1}$ by
the   closure of the  relation $x\sim x+e_i,\ $ whenever $x,x+e_i\in
I^{n-1}$. This is the usual  model of the standard torus
$\R^{n-1}/\Z^{n-1}$  as the cube $I^{n-1}$ with opposite points
identified. By Lemma \ref{REGS}, $\widehat T$ is
 homeomorphic to the   torus
\begin{equation}\label{T}
T:=\R^{n-1}_+/{\widetilde{V}}=\R^{n-1}_+/
\langle {\widetilde{\varepsilon}}_1,\dots,
{\widetilde{\varepsilon}}_{n-1}\rangle\simeq\R^{n-1}/\langle \mathrm{LOG}\,
{\widetilde{\varepsilon}}_1,\dots, \mathrm{LOG}\,
{\widetilde{\varepsilon}}_{n-1}\rangle.
\end{equation}
 The explicit
homeomorphism $F_0:\widehat T\to T$ is just the map induced by
  $f_0$ on the quotient tori. Part (ii) of Proposition \ref{f} insures that $f$ also induces
 a   continuous map $F:\widehat T\to T$.
 The situation is summarized in the   commutative diagrams
\begin{equation}\label{diagrams}
\begin{CD}
I^{n-1} @>f_0 >> \R^{n-1}_+\\
@VV\widehat{\pi}V @VV\pi V\\
\widehat{T} @>F_0>\simeq> T
\end{CD}\  \qquad
\qquad \qquad
\quad\begin{CD}
I^{n-1} @>f>> \R^{n-1}_+\\
@VV\widehat{\pi}V @VV\pi V\\
\widehat{T} @>F>{\phantom{\simeq}}> T
\end{CD}\
\end{equation}
where $\widehat{\pi}$ and $\pi$  are the natural quotient maps.

The set $f_0\big([0,1)^{n-1}\big)\subset\R^{n-1}_+$  is an obvious fundamental domain
for the action of $\widetilde{V}$ on $\R^{n-1}_+$.
 We will   show  in \S\ref{finalsection} that this  fundamental domain with curved boundaries
can be deformed  by a homotopy  into
a signed fundamental domain composed of (partly closed) polytopes. The first step towards
proving this is to find a homotopy between the maps $F$ and $F_0$ on the tori.
\begin{lemma}\label{tori}Suppose $g_0$ and $g_1$ are
 continuous maps from $I^{n-1}:=[0,1]^{n-1}$
to $\R^{n-1}_+$  such that for any standard basis vector
 $e_j$ of $\R^{n-1}$,
$g_i(x+e_j) ={\widetilde{\varepsilon}}_j\cdot  g_i(x) $
  whenever $ x\in I^{n-1}$ and $ x+e_j\in
I^{n-1}\ \,( i=0,\,1).$
  Let $G_i:\widehat T\to  T$    be the map induced by $g_i$ on the
  tori defined in \eqref{THat} and \eqref{T}.
     Then    $G_0$ is homotopic to $G_1$. In particular, the maps
$F :\widehat T\to T$  and
$F_0:\widehat T\to T$ between $(n-1)$-tori defined by \eqref{diagrams}  are homotopic.
\end{lemma}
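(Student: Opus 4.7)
The approach is to use the fact that $\R^{n-1}_+$ is contractible and, more importantly, that in logarithmic coordinates the equivariance condition $g_i(x+e_j) = \widetilde{\varepsilon}_j \cdot g_i(x)$ becomes an affine translation condition that is preserved under convex combinations. So I would construct the homotopy between $g_0$ and $g_1$ by straight-line interpolation after applying $\LOG$.

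Concretely, I would define $h : I^{n-1} \times [0,1] \to \R^{n-1}_+$ by
\begin{equation*}
h(x,t) := \EXP\!\bigl((1-t)\LOG g_0(x) + t\LOG g_1(x)\bigr),
\end{equation*}
where $\EXP : \R^{n-1} \to \R^{n-1}_+$ is the component-wise exponential, inverse to $\LOG$ of \eqref{LOGG}. Continuity of $h$ is immediate from continuity of $g_0$, $g_1$, $\LOG$ and $\EXP$. For each fixed $t$, the hypothesis on $g_0$ and $g_1$ gives
\begin{equation*}
\LOG g_i(x+e_j) = \LOG \widetilde{\varepsilon}_j + \LOG g_i(x) \qquad (i=0,1),
\end{equation*}
whenever $x,x+e_j \in I^{n-1}$, so taking the convex combination and exponentiating yields
\begin{equation*}
h(x+e_j,t) = \widetilde{\varepsilon}_j \cdot h(x,t).
\end{equation*}
Thus each slice $h(\cdot,t)$ satisfies the same boundary equivariance as $g_0$ and $g_1$.

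Next I would push $h$ down to the tori. Composing with the projection $\pi : \R^{n-1}_+ \to T = \R^{n-1}_+/\widetilde{V}$ gives a continuous map $\pi \circ h : I^{n-1} \times [0,1] \to T$. By the equivariance of each slice, $\pi \circ h$ identifies any two points $x,x'\in I^{n-1}$ that get identified under $\widehat{\pi}$ (since their images differ by multiplication by a product of $\widetilde{\varepsilon}_j$'s, which becomes trivial in $T$). By the universal property of the quotient $\widehat{\pi}\times\mathrm{id}_{[0,1]} : I^{n-1}\times[0,1] \to \widehat{T}\times[0,1]$, the map $\pi\circ h$ descends to a continuous map $H : \widehat{T}\times[0,1] \to T$. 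Setting $t=0$ and $t=1$ recovers $G_0$ and $G_1$, so $H$ is the desired homotopy.

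For the final assertion, I would just apply the general result to $g_0 := f_0$ (defined by \eqref{ff}) and $g_1 := f$ (the piecewise affine map of Proposition \ref{f}). Both maps land in $\R^{n-1}_+$ and satisfy the requisite equivariance, namely \eqref{fff} and Proposition \ref{f}$\,$(ii) respectively, so $F_0 = G_0$ is homotopic to $F = G_1$. There is no real obstacle here beyond checking that taking $\LOG$, interpolating, and exponentiating preserves the boundary identifications --- which is precisely where the multiplicative-to-additive passage makes the argument clean.
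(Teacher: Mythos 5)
Your proof is correct and uses essentially the same idea as the paper: interpolate between $g_0$ and $g_1$ by a one-parameter family that visibly preserves the boundary equivariance, then descend to the quotient tori. The only difference is that the paper interpolates additively, taking $g_t(x):=(1-t)g_0(x)+tg_1(x)$ directly in the convex set $\R^{n-1}_+$ (the equivariance survives because componentwise multiplication by $\widetilde{\varepsilon}_j$ is $\R$-linear), whereas you conjugate by $\LOG/\EXP$ and interpolate multiplicatively; both work, and the paper's variant simply avoids the logarithmic detour.
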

\begin{proof}
 For $0\le t\le1$, define $g_t:I^{n-1}\to
\R^{n-1}_+$ by $g_t(x):=(1-t)g_0(x)+tg_1(x)$. Clearly,
$(t,x)\to g_t(x)$ is continuous. If $ x\in I^{n-1}$ and $x+e_j\in
I^{n-1},$ then
$$
g_t(x+e_j)=(1-t)g_0(x+e_j)+tg_1(x+e_j)=(1-t){\widetilde{\varepsilon}}_j\cdot g_0(x)
+t{\widetilde{\varepsilon}}_j\cdot g_1(x)={\widetilde{\varepsilon}}_j\cdot g_t(x).
$$
Thus $g_t$ descends to a homotopy $G_t:\widehat T\to T$ between
$G_0$ and $G_1$.
\end{proof}

\section{Review of topological degree theory}

Algebraic topology gives  an elegant approach to degree theory using
homology groups. More elementary (homology-free, but far longer) treatments of
degree theory \cite[\S III]{OR} first define the degree of a proper smooth   map at regular values,
 and then
apply an approximation process to define the degree of a proper continuous
map.
  Our application of degree
theory in \S\ref{finalsection} will concern the local and global
degrees of the map $F:\widehat{T}\to T$
  in  \eqref{diagrams}. This map is proper and continuous, but not
everywhere differentiable. However, every point of $T$ is the
limit of regular values of $F$, so we will still
be able to compute the local and global degrees of $F$.

There are several textbooks devoted entirely to degree theory, but we shall only need to
draw on a few pages   of Dold's algebraic
topology textbook \cite[pp.\ 266--269]{Dol}.
These pages rely on   basic  singular  homology theory, such as
 excision and homotopy invariance \cite[Ch.\ II--III, pp.\ 16--46]{Dol}
 \cite[\S8--15, pp.\ 35--68]{Gre},
and the   calculation
    of the relative singular homology group
 \cite[Ch.\ VIII, \S2.6, 3.3,  3.4]{Dol}  \cite[\S22,  Cor.\ 22.26, p.\ 121]{Gre}
\begin{equation}\label{homology}  
H_r(M,M-C)\cong
  \begin{cases}
      \Z^t&\text{ if }C\text{ is compact and has $t$ connected components}, \\
      0&\text{ if } C \text{ is connected, but not compact.}
  \end{cases}
\end{equation}
Here $M$ is an orientable $r$-dimensional manifold
  and  $H_r(Y,X)=H_r(Y,X;\Z)$ denotes
  the $r^{\text{th}}$
relative singular homology group with $\Z$-coefficients
 of the topological space $Y$ mod
 its subspace $X$  \cite[Ch.\ III, \S3.1]{Dol} \cite[\S 13]{Gre}.  This fact
underlies the  definition in \S\ref{Basicdegree} below  of the topological
  degree in terms of the fundamental
class of a compact set and explains the crucial
 local-global principle in Proposition \ref{DegreeProps} \eqref{Localglobal} below. In particular,
 if $P\in M$ we have $H_r(M,M-P)\cong\Z$ (but this has an easy proof \cite[Ch.\ VIII, \S2.1]{Dol}).

 An isomorphism of
homology groups (always taken with $\Z$-coefficients) will sometimes be written $\widetilde{\to}$ or
$\widetilde{\leftarrow}$ to indicate
that it is induced by an inclusion of topological spaces. By an
 $r$-manifold  $M^r$ we  mean  an $r$-dimensional topological manifold
 without boundary.  Our  manifolds will all have the same fixed dimension $r$, so we
 often write $M$ for $M^r$.

\subsection{Basic properties}\label{Basicdegree}

If  $M$ is an $r$-manifold and $P\in M$, we will write
 $o_P$ for a choice of one of the two generators of
  $H_r(M,M-P)\cong\Z$.
   We will assume that all our manifolds are orientable and oriented,
  \ie we assume given a consistent (``locally constant") choice of
   $ o_P= o_P(M)$ for all $P\in M$
  \cite[Ch.\ VIII, \S2.9]{Dol}. An oriented
   open subset $W\subset M$ has the orientation induced from $M$
     if   for all $P\in W$,   the isomorphism
    $H_r(W,W-P)\, \widetilde{\to}\, H_r(M,M-P)$ maps $ o_P(W)$
        to $ o_P(M)$. We will  call such a $W$
    an (oriented) $r$-submanifold.
 If $M$ is orientable and connected,
  an orientation on an open subset $W$ determines a unique
   orientation on $M$, \ie the one for which the given
    orientation on $W$ coincides with the one
  induced from $M$. In fact, on a connected
 orientable $r$-manifold $M$,
    a    generator $ o_P\in H_r(M,M-P)$ for a single $P\in M$ determines a
     unique orientation on $M$ satisfying $ o_P(M)=o_P$.

More generally, for a compact  non-empty subset $K\subset M $
of an (oriented)   $r$-manifold $M$,
 the fundamental
class $ o_K= o_K(M)$ of $K$ can be characterized as
the unique element of $ H_r(M,M-K)$ mapping to
$ o_P(M)\in H_r(M,M-P)$ for every $P\in K$ \cite[Ch.\ VIII, \S4.1]{Dol}.
 Here the map on
homology is induced by the
inclusion of pairs $(M,M-K)\to(M,M-P)$.
If $K$ is empty,  $ o_K:=0$.
If $K$ is connected and not empty, $ o_K$ is a
 generator of  $ H_r(M,M-K)\cong\Z$ \cite[Ch.\ VIII, \S4.1]{Dol}.

If $G:N\to M$ is a  continuous   map between two
oriented $r$-manifolds and $K\subset M$ is  connected, non-empty
and $G^{-1}(K)\subset N$ is compact,
we define the degree of $G$ over $K$
as the unique integer $\deg_K(G)$ such that
the  induced map on 
homology $G_*:H_r\big( N,N-G^{-1}(K)\big)\to H_r\big(M,M-K\big)$
  satisfies
\begin{equation}\label{DegreeK}
G_*\big( o_{G^{-1}(K)}\big)=\deg_K(G)\cdot o_K\,.
\end{equation}
Often, instead of listing     the above assumptions on $K$ and $G$, we shall simply say that
$\deg_K(G)$ is defined. Note that if $N=M$ (with the same orientation) and Id is the
identity map, then $\deg_{K}(\text{Id})=+1$.

We now give the main properties of  the topological degree. Some of these
  obviously follow from the others, but we give them anyhow for later reference.

\begin{proposition}\label{DegreeProps}  Suppose $G:N\to M$ is a
 continuous  map between two
oriented $r$-manifolds, and suppose $K\subset M$ is
a connected,  non-empty  compact subset of $M$ with
  $G^{-1}(K)\subset N$   compact. Then $\deg_K(G)$ is defined and the following hold.
  \begin{enumerate} 
  \item \label{Subsets} {\bf{(Degree over subsets)}}  If $I\subset K$ is
a connected, non-empty compact subset of $K$, then  $\deg_I(G)$ is  defined and
$\deg_I(G)= \deg_K(G)$.
  \item\label{Points} {\bf{(Shifting  points)}}
  If $P$ and $Q$ are points in  $K$, then $\deg_P(G)$
   and $\deg_Q(G)$ are defined and
  $\deg_P(G) =\deg_Q(G)$.
  \item \label{NotSurj} {\bf{(Maps missing a point of $K$)}}
    If $K\not\subset G(N)$, then  $\deg_K(G)=0$.
  \item \label{Homotopy} {\bf{(Homotopy invariance)}}
     Suppose $\Theta:N\times[0,1]\to M$
  is continuous   and   $\Theta^{-1}(K)\subset N\times[0,1]$
   is compact. Define $\Theta_t:N\to M$ as
    $\Theta_t(n):=\Theta(n,t)$ and suppose $G=\Theta_0$.
   Then $\deg_K(\Theta_1)$ is defined   and
   $\deg_K(G)=\deg_K(\Theta_1)$.
  \item\label{Propermaps} {\bf{(Global degree
   for proper maps)}}  If $G$ is  proper
   (\ie $G^{-1}(L)\subset N$ is compact for any
   compact $L\subset M$) and $M$ is connected,
  then $\deg_L(G)$ is defined for any connected,
   non-empty compact subset $L$ of $M$,
  and $\deg_L(G)=\deg_K(G)$. We let $\deg(G):=
  \deg_L(G)$ for any non-empty compact subset $L\subset M$.
  \item \label{CompactCase} {\bf{(Compact case)}}
   Suppose $N$ is compact and $M$ is connected. Then
  $\deg_L(G)$ is defined for any  non-empty,
  connected compact subset   $L\subset M$.
  Moreover, if $ G^\prime:N\to M$ is homotopic to $G$, then $\deg(G)= \deg(G^\prime)$.
   \item\label{Composing}  {\bf{(Composition)}}   If $N^\prime$ is
    an oriented $r$-manifold,  $g:N^\prime\to N$ is proper and $N$ is connected, then
    $\deg_K(G\circ g)$ is defined and   $\deg_K(G\circ g)=
    \deg_K(G)\cdot\deg(g)$.
     \item\label{Homeomorphisms} {\bf{(Homeomorphisms)}}
      If $G$ is  a homeomorphism between connected manifolds,
 then $\deg(G) =\pm1$. In fact, $\deg(G)=+1$ if and only if
  $G$ is orientation-preserving, \ie $G_* \big(o_P(N)\big)
  =o_{G(P)}(M)$ for some (and therefore any) $P\in N$.
     \item\label{Localglobal}  {\bf{(Local-global)}}
     Suppose $U_i\subset N\ (1\le i\le t)$ are
      $r$-submanifolds
    (\ie open subsets with the induced orientation) such that
    $$
    G^{-1}(K)\subset \bigcup_{i=1}^t U_i,\qquad\qquad U_i\cap U_j\cap
     G^{-1}(K)=\varnothing\qquad(  i\not= j).
    $$
    Let  $G_{U_i}$ denote  $G$ restricted to $U_i$.
   Then $\deg_K(G_{U_i})$ is defined and
    $$
    \deg_K(G)=\sum_{i=1}^t\deg_K(G_{U_i}).
    $$
    \item\label{Shrinking}  {\bf{(Shrinking the domain)}}
    Assume      $G^{-1}(K)\subset U$, where  $U\subset N$ is an $r$-submanifold,
    and  let $G_U$ denote  $G$ restricted to $U$. Then $\deg_K(G_U)$ is defined
    and $ \deg_K(G_U)=\deg_K(G)$.
  \end{enumerate}
\end{proposition}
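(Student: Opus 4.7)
My plan is to derive all ten properties from three standard ingredients drawn from \cite[pp.\ 266--269]{Dol}: (a) the pointwise characterization of the fundamental class $o_K \in H_r(M, M-K)$ by its images in each $H_r(M, M-P)\cong\Z$ for $P\in K$, (b) the homotopy invariance of singular homology, and (c) the excision axiom, which underlies both the calculation \eqref{homology} and the additivity splitting of a fundamental class over a disjoint union. The well-definedness of $\deg_K(G)$ is immediate: $K$ compact connected non-empty makes $o_K$ a generator of $H_r(M, M-K)\cong\Z$, compactness of $G^{-1}(K)$ makes $o_{G^{-1}(K)}$ a bona fide class (taken to be $0$ when the preimage is empty), and continuity of $G$ provides the induced map $G_*$ on the relative homology groups appearing in \eqref{DegreeK}.

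Properties \eqref{Subsets}--\eqref{NotSurj} reduce to naturality. For $I\subset K$ connected non-empty compact, the inclusion of pairs $(M, M-K) \hookrightarrow (M, M-I)$ sends $o_K \mapsto o_I$ by the pointwise characterization, and similarly on the $N$ side; commuting $G_*$ with these inclusions yields $\deg_I(G) = \deg_K(G)$. Specialization to points gives \eqref{Points}, and the further specialization $G^{-1}(P)=\varnothing$ gives \eqref{NotSurj}. Property \eqref{Homotopy} is the homotopy invariance of singular homology applied to the chain-homotopic pair of maps $(N, N-\Theta^{-1}(K)) \to (M, M-K)$ induced by $\Theta_0$ and $\Theta_1$; compactness of $\Theta^{-1}(K) \subset N\times[0,1]$ is exactly what is needed for these restrictions to be well-defined. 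For \eqref{Propermaps}, properness makes $\deg_L(G)$ defined for every connected non-empty compact $L$, and a chain argument (any two points of the connected manifold $M$ lie in a common compact connected set, e.g.\ a path) combined with \eqref{Subsets} shows this common value is independent of $L$. Property \eqref{CompactCase} is the specialization where $N$ compact forces $G$ proper; \eqref{Composing} is the functoriality $(G \circ g)_* = G_* \circ g_*$ combined with the definition; and \eqref{Homeomorphisms} follows because a homeomorphism induces isomorphisms on all the relevant relative homology groups, so $\deg(G)=\pm 1$ with sign governed by the action on the local $\Z$'s at a point.

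The main obstacle is \eqref{Localglobal}. The plan is to use excision: the disjointness hypothesis $U_i \cap U_j \cap G^{-1}(K) = \varnothing$ for $i\ne j$ allows excising $N\setminus\bigcup_i U_i$ from the pair $(N, N-G^{-1}(K))$, yielding a splitting
$$
H_r\bigl(N, N-G^{-1}(K)\bigr) \;\cong\; \bigoplus_{i=1}^t H_r\bigl(U_i, U_i - G^{-1}(K)\cap U_i\bigr),
$$
under which $o_{G^{-1}(K)}$ corresponds to $\sum_i o_{G^{-1}(K)\cap U_i}$ by the pointwise characterization of fundamental classes. Applying $G_*$ coordinate by coordinate and comparing with \eqref{DegreeK} for each restriction $G_{U_i}$ yields the required sum formula. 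Property \eqref{Shrinking} is the case $t=1$. Since the degree-theoretic package is assembled in \cite{Dol}, the proof will largely consist of translating his formulation into the precise form of \eqref{Subsets}--\eqref{Shrinking} needed for the application in \S\ref{finalsection}.
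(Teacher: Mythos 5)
Your overall strategy matches the paper's: both assemble the properties from Dold's treatment via the pointwise characterization of the fundamental class, naturality of inclusions, homotopy invariance, and excision, and claims \eqref{Subsets}--\eqref{NotSurj}, \eqref{Homotopy}, \eqref{Propermaps}, \eqref{Composing}, \eqref{Homeomorphisms} are handled the same way as in the paper (which cites \cite[Ch.\ VIII, \S4.4--4.6]{Dol} for most of these). Two points deserve attention.

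First, the step in \eqref{Localglobal} is where you have a genuine gap. Excising $N\setminus\bigcup_i U_i$ is legitimate (the hypothesis $G^{-1}(K)\subset\bigcup_i U_i$ is exactly what makes the excision condition hold), but this only reduces to the pair $\bigl(\bigcup_i U_i,\ \bigcup_i U_i - G^{-1}(K)\bigr)$. The further claim that this splits as $\bigoplus_i H_r\bigl(U_i,\ U_i - G^{-1}(K)\cap U_i\bigr)$ does \emph{not} follow from the disjointness of the $U_i\cap G^{-1}(K)$ alone, because the $U_i$ themselves are allowed to overlap outside $G^{-1}(K)$. To fix this you must either (a) first shrink each $U_i$ to an open $V_i\subset U_i$ containing $K_i:=G^{-1}(K)\cap U_i$ with the $V_i$ pairwise disjoint (possible since the $K_i$ are pairwise disjoint compacta in a manifold), and then appeal to \eqref{Shrinking} proved first by a single excision to pass from $V_i$ back to $U_i$; or (b) run a relative Mayer--Vietoris argument using that the pairwise intersections $U_i\cap U_j$ are disjoint from $G^{-1}(K)$, so the relevant pairs $(U_i\cap U_j,\, U_i\cap U_j)$ contribute zero. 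The paper sidesteps this entirely by adjoining $U_0:=N-G^{-1}(K)$ so that $\{U_i\}_{i=0}^t$ covers all of $N$, citing \cite[Ch.\ VIII, \S4.7]{Dol} which is formulated for covers of $N$, and then killing the extra term with \eqref{NotSurj}. That trick is cleaner than either repair of your argument, and it also changes the logical order: the paper derives \eqref{Shrinking} \emph{from} \eqref{Localglobal}, whereas your route (a) requires proving \eqref{Shrinking} independently first.

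Second, in \eqref{CompactCase} you only address the existence of $\deg_L(G)$ (``$N$ compact forces $G$ proper'') and omit the second assertion, namely that homotopic maps from a compact $N$ have the same global degree. That part needs \eqref{Homotopy} together with the observation that $\Theta^{-1}(Q)\subset N\times[0,1]$ is automatically compact because $N\times[0,1]$ is. Also, for \eqref{Homotopy} itself, note that $\Theta^{-1}(K)$ lives in $N\times[0,1]$, not in $N$; the paper projects it to a compact $K'\subset N$ with $\Theta_t^{-1}(K)\subset K'$ for all $t$ so that $\Theta$ becomes a homotopy of pairs $(N,N-K')\to(M,M-K)$, and then compares degrees using the class $o_{K'}(N)$. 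Your sketch gestures at the right idea but should be rephrased to make the homotopy of pairs well-formed.
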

\begin{proof}
Claims \eqref{Subsets}, \eqref{Points} and \eqref{NotSurj}
 are proved in \cite[Ch.\ VIII, \S4.4]{Dol}.
To prove \eqref{Homotopy}  \cite[Ch.\ VIII, \S4.10, Exercise 3]{Dol}, let
$$
K^\prime:=\big\{ n\in N  \big|\,  \Theta(n,t)\in K\ \text{for some }t\in[0,1]\big\}
$$
be the projection to
$N$ of the compact set $\Theta^{-1}(K)\subset N\times[0,1]$.
Thus $K^\prime\subset N$ is compact and    $\Theta_t^{-1}(K)\subset K^\prime\ \,(0\le t\le1)$. Hence
  $\Theta$ gives a homotopy of pairs $\Theta_t:(N,N-K^\prime)\to(M,M-K)$.
   Passing to  homology, by   homotopy invariance \cite[Ch.\ III, \S5.2]{Dol},
$$
{\Theta_0}_*={\Theta_1}_*.
$$
 Since
 $\Theta_t^{-1}(K)\subset K^\prime $    is a closed subset of a compact set,  $\Theta_t^{-1}(K)$
 is compact and so $\deg_K(\Theta_t)$ is defined.  Also \cite[Ch.\ VIII, \S4.3]{Dol},
$$
{\Theta_t}_*\big(o_{ K^\prime}(N) \big)=\deg_K(\Theta_t)\cdot o_K(M).
$$
Combining the last two displays, we have
$$
\deg_K(\Theta_0)\cdot o_K(M)={\Theta_0}_*\big( o_{K^\prime}(N) \big)=
 {\Theta_1}_*\big( o_{K^\prime}(N) \big)=\deg_K(\Theta_1)\cdot o_K(M).
$$
Since  $G=\Theta_0$,  we find $\deg_K(G)=\deg_K(\Theta_0)=\deg_K(\Theta_1)$, proving \eqref{Homotopy}.

Claims \eqref{Propermaps} and  \eqref{Composing} are
 proved in  \cite[Ch.\ VIII, \S4.5--4.6]{Dol}.
 Claim  \eqref{Homeomorphisms} follows from \eqref{Propermaps}, with $L:=G(P)$. To prove
 \eqref{CompactCase},   note that
 any  continuous  map from a compact manifold is proper. Also,
  if $\Theta$ is a homotopy between
 $G$ and $G^\prime$, and  $Q\in M$, then
    $\Theta^{-1}(Q)\subset [0,1]\times N$ is compact, as $N$ is assumed compact.
     From  \eqref{Propermaps} and \eqref{Homotopy},
     $\,\deg(G)=\deg_Q(G)=\deg_Q(G^\prime)=\deg(G^\prime)$, proving \eqref{CompactCase}.

To prove \eqref{Localglobal},     let $U_0:= N-G^{-1}(K)$. Then $U_0$ is
  an open subset of $N$ and
    $U_0\cap U_i\cap G^{-1}(K)=\varnothing$ for $i\not=0$. Also,
     $ \bigcup_{i=0}^t U_i= N$, so by \cite[Ch.\ VIII, \S4.7]{Dol},
$$
\deg_K(G)=\sum_{i=0}^t\deg_K(G_{U_i}).
$$
But $\deg_K(G_{U_0})=0$ by \eqref{NotSurj}, as
 $G(U_0)\not\subset K$, so we have proved \eqref{Localglobal}.

Claim \eqref{Shrinking} follows from \eqref{Localglobal} with $t=1$.
\end{proof}

\subsection{Local degree}\label{Local} Suppose $G:N\to M$ is a
map between oriented manifolds
and that  $p\in N$ is an isolated point of $G^{-1}\big(G(p)\big)$.
Thus there is an $r$-submanifold
$V\subset N$ (\ie an open subset with the induced orientation)
 such that $G^{-1}\big(G(p)\big)\cap V=\{p\} $. Then
$\deg_{G(p)}(G_V)$ is defined, where $G_V$ is $G$ restricted to $V$.
If $V^\prime\subset N$ is another  $r$-submanifold
such that $G^{-1}\big(G(p)\big)\cap V^\prime=\{p\} $,
 Proposition \ref{DegreeProps} \eqref{Shrinking}
shows
$$
\deg_{G(p)}(G_V)= \deg_{G(p)}(G_{V\cap V^\prime})=\deg_{G(p)}(G_{ V^\prime}).
$$
Hence $\deg_{G(p)}(G_V)$ depends only on $p$ and $G$, so we shall write
\begin{equation}\label{LocDeg}
 \mathrm{locdeg}_p(G ):= \deg_{G(p)}(G_V)\qquad\qquad
 \big(p\in N,\ \,V\cap G^{-1}\big(G(p)\big)=\{p\}\big),
\end{equation}
and call  $ \mathrm{locdeg}_p(G )$ the  local degree of $G$ at $p$.

If $G$ is a local homeomorphism at $p$
(\ie $G$ restricted to some open neighborhood  of
$p$   is a homeomorphism onto its image),
then $\mathrm{locdeg}_p(G )$ is certainly defined and equals $\pm1$
by  Proposition \ref{DegreeProps} \eqref{Homeomorphisms}.
If $G:N\to M$ is a homeomorphism between connected manifolds,
 we have by  Proposition \ref{DegreeProps} \eqref{Propermaps} and \eqref{Shrinking},
\begin{equation}\label{LocGlobDeg}
 \mathrm{locdeg}_p(G ) = \deg(G)\qquad\qquad(p\in N).
\end{equation}
 If $g:N^\prime\to N$ and $G:N\to M$ are
local homeomorphisms at $p^\prime\in N^\prime$ and
at $g( p^\prime )\in N$ respectively, then  Proposition \ref{DegreeProps} \eqref{Composing} shows that
\begin{equation}\label{ComposeLocDeg}
 \mathrm{locdeg}_{p^\prime}(G\circ g ) = \mathrm{locdeg}_{g(p^\prime)}(G )
 \cdot  \mathrm{locdeg}_{p^\prime}(g ) .
\end{equation}

We   now    prove the standard
 formula for the   degree of a local diffeomorphism of Euclidean space.
 This formula is often taken as the starting point for the definition of
the local degree of a map between smooth oriented $r$-manifolds. We include a proof since
Dold \cite{Dol} does not treat the differentiable case.
\begin{proposition}\label{localdegree} Fix an orientation on $\R^r$ and give the
open subset $U\subset\R^r$   the induced orientation. Let  $G:U\to \R^r$ be
  continuously  differentiable  and suppose that at some $\gamma\in U$, the differential
 $d G_\gamma:\R^r\to\R^r$ of $G$ at $\gamma$ is an invertible linear transformation.
 Then $\mathrm{locdeg}_{\gamma}(G)$ is defined and
\begin{equation}\label{detformula}
  \mathrm{locdeg}_{\gamma}(G)
 =\mathrm{sign}\big(\!\det(d  G_\gamma)\big).
\end{equation}
\end{proposition}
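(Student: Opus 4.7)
The inverse function theorem, applied to the $C^1$ map $G$ at a point $\gamma$ where $dG_\gamma$ is invertible, produces an open connected neighborhood $V\subset U$ of $\gamma$ on which $G|_V$ is a diffeomorphism onto its image $G(V)$. In particular $G^{-1}(G(\gamma))\cap V=\{\gamma\}$, so $\mathrm{locdeg}_\gamma(G)$ is defined, and by Proposition \ref{DegreeProps}\,\eqref{Homeomorphisms} applied to the homeomorphism $G|_V$ between connected oriented $r$-manifolds, $\mathrm{locdeg}_\gamma(G)=\pm1$, the sign being $+1$ exactly when $G|_V$ is orientation-preserving.

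I would then reduce to the affine approximation
$$L(x):=G(\gamma)+dG_\gamma(x-\gamma),$$
which is an affine automorphism of $\R^r$ with $L(\gamma)=G(\gamma)$ and $dL_\gamma=dG_\gamma$. To compare $G$ and $L$, introduce the straight-line homotopy
$$\Theta(x,t):=(1-t)G(x)+tL(x),\qquad (x,t)\in V\times[0,1],$$
so $\Theta_0=G|_V$, $\Theta_1=L|_V$, and $\Theta(\gamma,t)=G(\gamma)$ for every $t$. The central analytic step is to show that, after shrinking $V$, the set $\Theta^{-1}(G(\gamma))\subset V\times[0,1]$ is compact. Writing $G(x)-G(\gamma)=dG_\gamma(x-\gamma)+\rho(x)$ with $\rho(x)=o(|x-\gamma|)$, one has
$$\Theta(x,t)-G(\gamma)=dG_\gamma(x-\gamma)+(1-t)\rho(x).$$
Invertibility of $dG_\gamma$ gives $c>0$ with $|dG_\gamma(v)|\ge c|v|$ for all $v\in\R^r$, so by choosing $V$ to be a ball $B(\gamma,\delta)$ with $\delta$ small enough that $|\rho(x)|\le (c/2)|x-\gamma|$ on $\overline V$, we obtain
$$|\Theta(x,t)-G(\gamma)|\ge (c/2)|x-\gamma|\qquad(x\in\overline V,\ \,t\in[0,1]),$$
uniformly in $t$. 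Hence $\Theta^{-1}(G(\gamma))=\{\gamma\}\times[0,1]$, which is compact. Proposition \ref{DegreeProps}\,\eqref{Homotopy} applied with $K=\{G(\gamma)\}$ then yields
$$\mathrm{locdeg}_\gamma(G)=\deg_{G(\gamma)}(G|_V)=\deg_{G(\gamma)}(L|_V)=\mathrm{locdeg}_\gamma(L).$$

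Finally, since $L$ is a homeomorphism of the connected manifold $\R^r$, formula \eqref{LocGlobDeg} together with Proposition \ref{DegreeProps}\,\eqref{Homeomorphisms} gives $\mathrm{locdeg}_\gamma(L)=\deg(L)$, which equals $+1$ iff $L$ is orientation-preserving. Because $L$ differs from the linear map $dG_\gamma$ by a translation, and translations are homotopic to the identity through homeomorphisms (the straight-line homotopy $x\mapsto x+sb$, $s\in[0,1]$), $L$ preserves orientation iff $dG_\gamma$ does. The latter is the classical fact that a linear automorphism of oriented $\R^r$ preserves orientation iff its determinant is positive, which follows by showing $GL^+(r,\R)$ and $GL^-(r,\R)$ are each path-connected in $GL(r,\R)$: any $A\in GL^+$ is connected by a path of invertible linear maps to the identity (degree $+1$ by homotopy invariance), while any $A\in GL^-$ is connected to a fixed reflection (degree $-1$, computed directly from the definition of $o_P$). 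Combining these steps gives $\mathrm{locdeg}_\gamma(G)=\mathrm{sign}(\det dG_\gamma)$. The main obstacle is the uniform-in-$t$ compactness of $\Theta^{-1}(G(\gamma))$, for which the quantitative lower bound above is indispensable; once that estimate is in hand, the rest is a formal unwinding via homotopy invariance and the degree of homeomorphisms.
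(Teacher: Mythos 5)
Your proposal is correct and follows essentially the same route as the paper's proof: first establish the formula for translations and for invertible linear maps (via path-connectedness of $GL^\pm(r,\R)$), then compare $G$ to its affine approximation near $\gamma$ by a straight-line homotopy whose compactness you secure with exactly the quantitative estimate $\lvert dG_\gamma(v)\rvert\ge c\lvert v\rvert$ together with a ball small enough that the remainder $\rho$ satisfies $\lvert\rho(x)\rvert\le(c/2)\lvert x-\gamma\rvert$. The only cosmetic difference is order of operations: the paper first normalizes (translate to $\gamma=0$, $G(0)=0$, compose with $dG_0^{-1}$ so the derivative is the identity) and then homotopes the normalized $G$ to $\mathrm{Id}$, while you homotope $G$ directly to its affine approximation $L$ and then decompose $L$ into translation and linear part; these are the same argument up to a change of variables.
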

\noindent We note quite  generally, that if $G:U\to M$ and $U\subset M$ is
 an $r$-submanifold (with the induced orientation) of
  an oriented $r$-manifold $M$, then
$\deg_K(G)$ (when defined) is independent of the
 choice of orientation on $M$. Hence it is
not surprising that $\mathrm{locdeg}_{\gamma}(G)$ in
 \eqref{detformula} is independent of the orientation on $\R^r$.
\begin{proof}
We first compute
the degree of a translation $T_\alpha:\R^r\to\R^r$ given
 (for some fixed $\alpha\in\R^r$) by
$T_\alpha(v):=v+\alpha$ for $v\in\R^r$. Let $\Theta:[0,1]\times \R^r\to\R^r$ be
defined by $\Theta(t,v):=v+t\alpha$. Then
 $\Theta^{-1}(0)=\big\{(t,-t\alpha)\big|\,t\in[0,1] \big\}$,
  a compact set. Hence, by  Proposition \ref{DegreeProps} \eqref{Homotopy},
$$
\deg_0(T_\alpha)=\deg_0(\Theta_1)= \deg_0(\Theta_0)=\deg_0(\text{Id})=+1\qquad\quad\big(\Theta_t(v):=\Theta(t,v)\big),
$$
\ie translations have (global and local)
 degree $+1$, in agreement with \eqref{detformula}.

 Next we consider an invertible linear function $T:\R^r\to\R^r$ and prove that
  \begin{equation}\label{lineardegree}
 \mathrm{locdeg}_\gamma( T)=\mathrm{sign}\big(\!\det(T)\big).
  \end{equation}
  If $\det(T)>0$,
  there is a continuous path $T_t\in \mathrm{GL}(r,\R)$
  connecting $T=T_0$ to the identity map $\text{Id}=T_1$.
  In Proposition \ref{DegreeProps} \eqref{Homotopy},
  let $\Theta:[0,1]\times \R^r\to\R^r$ be
defined by $\Theta(t,v)=T_t(v)$. Then $\Theta^{-1}(0)
=\big\{(t,0)\big|\,t\in[0,1] \big\}$,
 a compact set. Hence
 $ \deg(T)=\deg(T_0)=\deg(T_1)=\deg(\text{Id})=+1$. If  $\det(T)<0$,
   there is a continuous path $T_t\in \mathrm{GL}(r,\R)$
  connecting $T$ to a reflection $T_1$ across a hyperplane though 0. Here
    an explicit calculation with simplices   shows  that
    $\deg(T_1)=-1$    \cite[Ch.\ IV, \S4.3]{Dol}.

We can now prove Proposition \ref{localdegree}.
 By the inverse function theorem, $G$ is a
local diffeomorphism in some
 neighborhood of $\gamma$, hence the local
degree of $G$ is certainly defined at $\gamma$.
In view of \eqref{ComposeLocDeg}, after composing with translations
 we can assume that  $\gamma=0$ and $G(0)=0$.
 Since we already know \eqref{detformula} for   linear maps, by
 considering $dG_0^{-1}\circ G$, \eqref{ComposeLocDeg}
  shows that we may  assume $dG_0=\mathrm{Id}$.
 After these simplifications, the proposition will be proved once we have
$\mathrm{locdeg}_0( G)= +1$.

 To calculate the local degree of $G$ at 0,
  we may  restrict $G$ to any small enough open ball
  $B :=\big\{x\in\R^r\big|\,\|x\|<\delta \big\} $,
where  $\|\ \|$ denotes  the Euclidean norm on $\R^r$.
Since $G$ is
  differentiable at 0 and $dG_0=\mathrm{Id}$,  for some $\delta>0$ we have
$ G(x)=x+w(x),$ where $ \|w(x)\|\le \|x\|/2$ for $x\in B$. Also, $w(0)=0=G(0)$.
Define  $\Theta:[0,1]\times B\to \R^r$   by $\Theta(t,x):=x+tw(x)$,
 so that $\Theta_0(x):=\Theta(0,x)=x$ and $\Theta_1(x):=\Theta(1,x)=G(x)$ for $x\in B$.
Note that $\Theta(t,x)=0$ if and only if $x=0$ since, for $  x\in B$ and $0\le t\le1$,
 $$  \| \Theta(t,x)\|=\|x+tw(x)\|\ge \|x\|-\|tw(x)\|\ge
\|x\|-\|x\|/2>0\qquad(x\not=0).
$$
Hence $\Theta^{-1}(0)=\big\{(t,0,)\big|\,t\in[0,1]\big\}$, a compact set,
and so homotopy invariance   gives
$ \mathrm{locdeg}_0( G)=\mathrm{locdeg}_0( \text{Id})=+1,$
as claimed.\end{proof}

\section{Proof of main theorem}\label{finalsection}

We have shown (see Proposition \ref{conestosimplices} and Definition \ref{vd}) that to prove   Theorem \ref{Main},
  we need to prove the basic count
 \begin{equation}\label{AIM}
\sum_{\substack{\sigma\in S_{n-1}\\ w_\sigma\not=0}}
\,\sum_{z\in c_\sigma\cap\widetilde{V}\cdot y}w_\sigma=1\qquad\qquad\big(  y\in\R^{n-1}_+\big),
\end{equation}
 and that   the number of elements of $c_\sigma\cap {\widetilde{V}}\cdot y$ is
   bounded independently of $y$. This latter part  is clear  on applying the
    isomorphism $\LOG:\R_+^{n-1}\to\R^{n-1}$. Indeed, $\LOG(c_\sigma)$ has
    closure  $\LOG(\overline{c}_\sigma)$, a compact set, and $\LOG({\widetilde{V}})$ is a  lattice (see Lemma
    \ref{REGS}).

    We will prove \eqref{AIM} by showing  that it is   an instance
 of the local-global principle applied to the map $F:\widehat{T}\to T$  defined in \eqref{diagrams}.
 It  is not hard to calculate the global degree of $F$ since $F$ is
  homotopic to the much simpler map $F_0$, also  defined in \eqref{diagrams}. The
  calculation  of the local degree of $F$ at a  generic point will   prove straight-forward, yielding
 \eqref{AIM} for a generic $y\in\R^{n-1}_+$.

  To deal with the remaining $y$ (those whose $\widetilde{V}$-orbit
 intersects a boundary piece of some $\overline{c}_\sigma$), we will
 approach $y$ along the segment  $\overrightarrow{0,y} $
and show that   its  points  are generic when they are sufficiently close to $y$.
 This will allow  us to conclude that \eqref{AIM} also holds for $y$.

\subsection{Global degree} We fix once and for
all an orientation of $\R^{n-1}$  and use it to fix orientations
 on the $(n-1)$-tori $\widehat{T}$ and $T$   in \eqref{THat} and \eqref{T}  as follows.
  Since $\widehat\pi: [0,1]^{n-1}\to \widehat T$ restricted to
$(0,1)^{n-1}$ is a  local homeomorphism and tori are connected and orientable, we
orient $\widehat T$ by declaring $\widehat\pi$ to be orientation-preserving.
 Here the open subset $(0,1)^{n-1} \subset\R^{n-1}$ is given
the induced  orientation. Thus the local degree of $\widehat\pi$
at any point of $(0,1)^{n-1}$ is
$+1$.  Similarly, we orient $T=\pi(\R^{n-1}_+)$ by giving $\R^{n-1}_+ \subset\R^{n-1}$ the
  induced orientation,    declaring the local homeomorphism
  $\pi:\R^{n-1}_+ \to T$ to have local degree
$+1$.

\begin{lemma}\label{degF} Let
$\varepsilon_1,\dots,\varepsilon_{n-1}$ be independent totally
positive units of a totally real field $k$ and let $F:\widehat{T}\to T$
be as defined in \eqref{diagrams}.  Then $\deg(F)$ is defined and
\begin{equation}\label{degreeFlemma}
\deg(F)=\mathrm{sign}\big(\!\det(\Log\,\,\varepsilon_1,
\Log\,\,\varepsilon_2,\dots,\Log\,\,\varepsilon_{n-1})\big)=\pm1,
 \end{equation}
 where $\det(v_1,v_2,\dots,v_{n-1}) $ is the determinant of the $(n-1)\times
(n-1)$ matrix  having columns $v_i\in\R^{n-1}$, and $\Log:\R^n_+\to\R^{n-1}$ is given by
$\big(\Log\, \,x\big)^{(j)}=\log\, x^{(j)} \ \,(1\le j\le n-1)$.
\end{lemma}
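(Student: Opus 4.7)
The plan is to use the homotopy between $F$ and $F_0$ provided by Lemma \ref{tori}, reduce the computation of $\deg(F)$ to that of the homeomorphism $F_0$, and then determine the sign of $\deg(F_0) = \pm 1$ by computing a local degree via Proposition \ref{localdegree}.

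First, I would check that $\deg(F)$ is defined. Since $\widehat{T}$ is compact and $T$ is connected, Proposition \ref{DegreeProps}(\ref{CompactCase}) applies. Moreover, since $F_0$ is homotopic to $F$ by Lemma \ref{tori}, the same part of Proposition \ref{DegreeProps} gives $\deg(F) = \deg(F_0)$. Because $F_0$ is a homeomorphism between the connected oriented manifolds $\widehat{T}$ and $T$, Proposition \ref{DegreeProps}(\ref{Homeomorphisms}) yields $\deg(F_0) = \pm 1$; it remains to identify the sign.

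To pin down the sign, I would compute the local degree of $F_0$ at the image $\widehat{\pi}(p)$ of an interior point $p \in (0,1)^{n-1}$. By \eqref{ComposeLocDeg} applied to the commutative diagram \eqref{diagrams}, together with the convention that $\widehat{\pi}$ and $\pi$ have local degree $+1$ on $(0,1)^{n-1}$ and $\R^{n-1}_+$ respectively, one gets
\[
\mathrm{locdeg}_{\widehat{\pi}(p)}(F_0) = \mathrm{locdeg}_p(f_0).
\]
By \eqref{LocGlobDeg} this also equals $\deg(F_0)$. Now $f_0$ on $(0,1)^{n-1}$ is smooth, and writing $f_0(x) = \bigl(\prod_i (\widetilde{\varepsilon}_i^{(j)})^{x^{(i)}}\bigr)_j$ we find
\[
\frac{\partial f_0^{(j)}}{\partial x^{(k)}}(p) = f_0^{(j)}(p)\,\log \widetilde{\varepsilon}_k^{(j)} = f_0^{(j)}(p)\,\bigl(\LOG\,\widetilde{\varepsilon}_k\bigr)^{(j)}.
\]
Hence $df_0|_p = D \cdot M$ where $D$ is the diagonal matrix with positive entries $f_0^{(j)}(p)$ and $M$ has columns $\LOG\,\widetilde{\varepsilon}_1,\dots,\LOG\,\widetilde{\varepsilon}_{n-1}$. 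Since $\det D > 0$, Proposition \ref{localdegree} gives
\[
\mathrm{locdeg}_p(f_0) = \mathrm{sign}\bigl(\!\det(\LOG\,\widetilde{\varepsilon}_1,\dots,\LOG\,\widetilde{\varepsilon}_{n-1})\bigr).
\]
Lemma \ref{REGS} then converts this to $\mathrm{sign}\bigl(\!\det(\Log\,\varepsilon_1,\dots,\Log\,\varepsilon_{n-1})\bigr)$, yielding \eqref{degreeFlemma}.

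There is essentially no substantive obstacle: the machinery is all in place, and the only subtlety is keeping track of orientation conventions. The one point worth checking carefully is that the orientation on $T$ was fixed via $\pi : \R^{n-1}_+ \to T$ with local degree $+1$, so that applying Proposition \ref{localdegree} to $f_0$ directly (and not to $\pi \circ f_0$ or $\LOG \circ f_0$) gives the correct sign. This is why $D$ in the factorization $df_0 = DM$ does not contribute to the sign, and the answer reduces cleanly to the sign of the signed regulator.
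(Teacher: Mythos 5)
Your proposal is correct and follows essentially the same path as the paper's proof: homotope $F$ to $F_0$ via Lemma \ref{tori} and Proposition \ref{DegreeProps}(\ref{CompactCase}), reduce to $\mathrm{locdeg}_P(f_0)$ via the diagram \eqref{diagrams} and \eqref{ComposeLocDeg}, compute that local degree with Proposition \ref{localdegree}, and convert via Lemma \ref{REGS}. The only cosmetic difference is that you compute $df_0|_p$ directly and factor out the positive diagonal matrix $D$, whereas the paper first composes $f_0$ with $\LOG$ (noting $\LOG$ has local degree $+1$) to arrive at the same linear map $M$; these are the same computation.
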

\noindent Before proving the lemma, we note that   $\deg(F)=\pm1\not=0$ implies that $F$ is surjective
 (use Proposition \ref{DegreeProps} \eqref{NotSurj} with
$K=N=T$ and $G=F$). Since $\widehat{\pi}$ is surjective, we see from \eqref{diagrams} that
 $\pi \circ f=\widehat{\pi}\circ F$ is also surjective.
Since the image of $f$ is the union of the polytopes $\overline{c}_\sigma$, this means that every
  orbit $\widetilde{V}\cdot y\subset\R^{n-1}_+$ must intersect at least one $\overline{c}_\sigma$, \ie
   $\bigcup_{\sigma\in S_{n-1}}\overline{c}_\sigma$ contains a true
   fundamental domain for  $\widetilde{V}$ acting on $\R^{n-1}_+$.
\begin{proof}
By Lemma \ref{tori}, $F$ and $F_0$ are homotopic maps between compact,
 connected, oriented manifolds. By Proposition \ref{DegreeProps} \eqref{CompactCase},
 their degrees are defined and
$\deg(F_0)=\deg(F)$.
Hence it suffices to show that $\deg(F_0)$ is  given by the sign of the determinant in  \eqref{degreeFlemma}.

 Since
$F_0$ is a homeomorphism of connected manifolds $\big($see \eqref{diagrams}$\big)$,
 \eqref{LocGlobDeg} shows
  $$
  \deg(F_0)=\mathrm{locdeg}_{\widehat\pi(P)}(F_0)
  $$ for any $P\in (0,1)^{n-1}$. By \eqref{diagrams},
$F_0\circ\widehat\pi=\pi\circ f_0$, and $f_0$ is a local
homeomorphism around $P$. Recall that $\pi:\R^{n-1}_+\to T$ is a local homeomorphism everywhere
and $\widehat{\pi}:[0,1]^{n-1}\to \widehat{T}$ is a local homeomorphism at all $P\in(0,1)^{n-1}$. Since
 we have oriented $\widehat T$ and $T$ so that the local degree
of $\widehat\pi$ and $\pi$ is $+1$, by \eqref{ComposeLocDeg} we have
$$
\deg(F_0)=\mathrm{locdeg}_{\widehat\pi(P)}(F_0)=\mathrm{locdeg}_P (f_0)\qquad\qquad\big(P\in(0,1)^{n-1}\big).
$$
To compute the
latter degree note that   by Proposition \ref{localdegree}, the
diffeomorphism   $\LOG:\R^{n-1}_+\to\R^{n-1}$
in \eqref{LOGG} has local degree $+1$.
Thus
$$
\deg(F_0)=\mathrm{locdeg}_P (f_0)= \mathrm{locdeg}_P
(\LOG\circ f_0)=\mathrm{sign}\big(\!\det(\LOG\,\,{\widetilde{\varepsilon}}_1,
\dots,\LOG\,\,{\widetilde{\varepsilon}}_{n-1})\big),
$$
 since $\LOG\circ f_0:\R^{n-1}\to\R^{n-1}$ is an invertible
 linear map
 taking the basis element $e_i$ to $\LOG\,\,{\widetilde{\varepsilon}}_i\,$
 (again use Proposition  \ref{localdegree}).
   Lemma \ref{REGS} shows that the above
 determinant has the same sign if we replace
  $\LOG\,\,{\widetilde{\varepsilon}}_i$ by $\Log\,\,\varepsilon_i$.
\end{proof}

\subsection{Proof of the basic count for generic points}\label{easycase}
We first calculate
the local degree of   $F$
 at points where it  is a local diffeomorphism.
\begin{lemma}\label{goodcasedegree} If $x$ is an
interior point of the simplex $D_\sigma$ and $w_\sigma\not=0\,$
 $\big($see \eqref{Dsigma} and \eqref{wsigma}$\big)$,
then   the local degree
 of $F$ at $\widehat\pi(x)$ is defined    and
\begin{equation}\label{vsigma}
\mathrm{locdeg}_{\widehat\pi(x)}(F) =v_\sigma:=(-1)^{n-1}\mathrm{sgn}(\sigma)
\cdot\mathrm{sign}\big(\!\det(f_{1,\sigma},f_{2,\sigma},
\dots,f_{n,\sigma})\big),
 \end{equation}
 where $\mathrm{sign}\big(\! \det(v_1,v_2,\dots,v_n) \big)$
 is the sign of the
determinant of the $n\times n$ matrix
 having columns $v_i\in \R^n$,
  and $\mathrm{sgn}(\sigma)=\pm1$
 is the sign  of the permutation
$\sigma\in S_{n-1}$.
\end{lemma}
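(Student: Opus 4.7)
The plan is to use the commutative diagram \eqref{diagrams} together with composition of local degrees to reduce the computation to a routine Jacobian sign. Since $x$ lies in the interior of $D_\sigma$, which is contained in $(0,1)^{n-1}$, both $\widehat\pi$ and $\pi$ are local homeomorphisms with local degree $+1$ at $x$ and at $f(x)$ respectively (by the orientation conventions fixed earlier in this section). Moreover, by Proposition \ref{f}(i), $f$ coincides with the affine map $A_\sigma$ on an open neighborhood of $x$, and Lemma \ref{nondegenerate} ensures $A_\sigma$ is invertible since $w_\sigma\neq0$. Combining \eqref{ComposeLocDeg} with Proposition \ref{localdegree} thus gives
\[
\mathrm{locdeg}_{\widehat\pi(x)}(F) \;=\; \mathrm{locdeg}_x(f) \;=\; \mathrm{locdeg}_x(A_\sigma) \;=\; \mathrm{sign}\bigl(\det L_\sigma\bigr),
\]
where $L_\sigma$ is the linear part of $A_\sigma$.

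What remains is pure linear algebra: identifying $\mathrm{sign}(\det L_\sigma)$ with $v_\sigma$. From \eqref{Asigma} and $\phi_{0,\sigma}=0$ one reads off $L_\sigma(\phi_{i,\sigma}) = \varphi_{i,\sigma} - \varphi_{0,\sigma}$ for $1\le i\le n-1$, i.e.\ $\Phi = L_\sigma P$, where $P$ and $\Phi$ are the $(n-1)\times(n-1)$ matrices with columns $\phi_{i,\sigma}$ and $\varphi_{i,\sigma}-\varphi_{0,\sigma}$ respectively. Using $\phi_{i,\sigma}-\phi_{i-1,\sigma} = e_{\sigma(i)}$ from \eqref{phiisigma}, subtracting column $i-1$ from column $i$ in $P$ (for $i=n-1,\dots,2$) turns $P$ into the permutation matrix of $\sigma$, so $\det P = \mathrm{sgn}(\sigma)$.

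To handle $\det\Phi$ I will lift to $\R^n$ via the identity $f_{i,\sigma} = f_{i,\sigma}^{(n)}\bigl(\varphi_{i-1,\sigma},\,1\bigr)$, which is immediate from the definition of $\ell$. Factoring out the strictly positive scalars $f_{i,\sigma}^{(n)}$, subtracting the first column from the remaining $n-1$ columns, and expanding along the last row (which becomes $(1,0,\dots,0)$) yields
\[
\det(f_{1,\sigma},\dots,f_{n,\sigma}) \;=\; \Bigl(\prod_{i=1}^{n} f_{i,\sigma}^{(n)}\Bigr)\,(-1)^{n-1}\det(\Phi).
\]
Since the scalar prefactor is positive, $\mathrm{sign}(\det\Phi) = (-1)^{n-1}\mathrm{sign}\bigl(\det(f_{1,\sigma},\dots,f_{n,\sigma})\bigr)$. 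Taking determinants in $\Phi = L_\sigma P$ and using $\mathrm{sgn}(\sigma)^2 = 1$ then gives
\[
\mathrm{sign}(\det L_\sigma) \;=\; \mathrm{sgn}(\sigma)\cdot(-1)^{n-1}\mathrm{sign}\bigl(\det(f_{1,\sigma},\dots,f_{n,\sigma})\bigr) \;=\; v_\sigma.
\]

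The argument poses no real conceptual obstacle; the only care required is the sign bookkeeping in the Laplace expansion that produces the factor $(-1)^{n-1}$, and the verification that $\det P = \mathrm{sgn}(\sigma)$. The hypothesis $w_\sigma\neq0$ is used exactly once, via Lemma \ref{nondegenerate}, to guarantee that $A_\sigma$ is invertible so that Proposition \ref{localdegree} applies.
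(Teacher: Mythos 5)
Your proposal is correct and follows essentially the same route as the paper: reduce $\mathrm{locdeg}_{\widehat\pi(x)}(F)$ to $\mathrm{sign}(\det L_\sigma)$ via \eqref{diagrams}, \eqref{ComposeLocDeg}, Lemma \ref{nondegenerate} and Proposition \ref{localdegree}, then relate the $(n-1)\times(n-1)$ determinant to the $n\times n$ one by lifting with $f_{i,\sigma}=f_{i,\sigma}^{(n)}(\varphi_{i-1,\sigma},1)$, subtracting the first column, and Laplace-expanding along the bottom row. The only cosmetic difference is that you go directly to $\Phi=L_\sigma P$ with $\det P=\mathrm{sgn}(\sigma)$, whereas the paper first writes $L_\sigma(e_{\sigma(i)})=\varphi_{i,\sigma}-\varphi_{i-1,\sigma}$ and then converts the consecutive differences into $\varphi_{i,\sigma}-1_{n-1}$ by column additions before invoking its identity \eqref{trans}; the determinant identity in your display is exactly \eqref{trans} specialised to $w_i=f_{i,\sigma}$.
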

\begin{proof} Recall from  \eqref{diagrams} that
 $F\circ\widehat\pi=\pi\circ f$, with $f$   as in Proposition \ref{f}.
  Since $f$  restricted to $D_\sigma$
  is   the affine map $A_\sigma$, which by Lemma \ref{nondegenerate} is a
   bijection when $w_\sigma\not=0$, it is clear that $f$ is a local diffeomorphism
    around $x$. But $\widehat\pi$ and $\pi$ are also local
     diffeomorphims of degree $+1$,  so  $F$ is a local
     diffeomorphism around $\widehat{\pi}(x)$.
     By \eqref{ComposeLocDeg} and Proposition \ref{localdegree},
\begin{equation}\label{degAsigma}
\mathrm{locdeg}_{\widehat\pi(x)}(F)=\mathrm{locdeg}_x(f)
=\mathrm{sign}\big(\!\det(L_\sigma)\big),
 \end{equation}
where $L_\sigma:\R^{n-1}\to\R^{n-1}$ is the linear part of
$A_\sigma$. In the basis $\{\phi_{i,\sigma}\}_{i=1}^{n-1}$ of
$\R^{n-1}$,
$$
L_\sigma(\phi_{i,\sigma})=A_\sigma(\phi_{i,\sigma})-A_\sigma(0)=
\varphi_{i,\sigma}-\varphi_{0,\sigma}= \ell(f_{i+1,\sigma}) \,-
\,1_{n-1}\ \quad\big(1_{n-1}:=(1,1,\dots,1)\big),
$$
where we used   \eqref{phiisigma}, \eqref{Asigma}, $\phi_{0,\sigma}:=0$ and  the paragraph
following \eqref{affinesubspaces}.

We now compute $\det(L_\sigma)$. Let
$\{e_i\}_{i=1}^{n-1}$ be the standard basis of $\R^{n-1}$.  From \eqref{phiisigma},
$\phi_{i,\sigma}:=\sum_{j\le i}e_{\sigma(j)}$, so
$$
L_\sigma(e_{\sigma(i)})=L_\sigma(\phi_{i,\sigma}-
\phi_{i-1,\sigma})=L_\sigma(\phi_{i,\sigma})-
L_\sigma(\phi_{i-1,\sigma})=\varphi_{i,\sigma}-
\varphi_{i-1,\sigma}\quad(1\le i\le n-1).
$$
Let $P_\sigma:\R^{n-1}\to\R^{n-1}$ be the linear map determined by
$P_\sigma(e_i):=e_{\sigma(i)}$, so that
$\det(P_\sigma)=\mathrm{sgn}(\sigma)$. We have just shown that
$$
 \mathrm{sgn}(\sigma)\det(L_\sigma)=\det(L_\sigma\circ
 P_\sigma)=\det\!\big(\varphi_{1,\sigma}-\varphi_{0,\sigma},
 \varphi_{2,\sigma}-\varphi_{1,\sigma},
 \dots,\varphi_{n-1,\sigma}-\varphi_{n-2,\sigma}\big).
 $$
 Adding the first column above to the second, then the second to the third
 and so on, we find using $\varphi_{0,\sigma}=1_{n-1}$,
 \begin{equation}\label{detLsigma}
\mathrm{sgn}(\sigma)\det(L_\sigma)=
\det\!\big(\varphi_{1,\sigma}-1_{n-1},
 \varphi_{2,\sigma}-1_{n-1},\dots,\varphi_{n-1,\sigma}-1_{n-1}\big).
 \end{equation}
 Since $ \varphi_{i,\sigma}=\ell(f_{i+1,\sigma})$,
 the above $(n-1)\times(n-1)$ determinant
  is related to the $n\times n$ determinant in the
 lemma by the identity
 \begin{align}\nonumber
\mathrm{sign}\Big(\!&\det\!\big(1_n,w_2,\dots,w_n\big)\Big)=\\ &
(-1)^{n-1}\mathrm{sign}\Big(\!\det\!\big(\ell(w_2)-1_{n-1},
\ell(w_3)-1_{n-1},\dots,\ell(w_n)-1_{n-1}\big)\Big),\label{trans}
\end{align}
valid for any $w_i\in\R^n$ with $w_i^{(n)}>0\
\,(2\le i\le n)$.\footnote{\ To prove \eqref{trans},
start with the matrix $(1_n,w_2,\dots,w_n)$, divide
 the $i^{\mathrm{th}}$ column (\ie $w_i $) by $w_i^{(n)}$ for
 $2\le i\le n$. This makes no change in the sign of
  the determinant as $w_i^{(n)}>0$.
  Now subtract the first column $1_n$
  from each of the other columns and expand by the last row.}

  Combining \eqref{degAsigma}, \eqref{detLsigma} and
   \eqref{trans} gives  the lemma.
\end{proof}

We now prove the basic count \eqref{AIM} at a  generic point, \ie for  $y\in\R^{n-1}_+-\mathcal{B}$, where
\begin{equation}\label{B}
\mathcal{B}:=\bigcup_{\sigma\in S_{n-1}} \mathcal{B}_\sigma,\qquad\qquad \mathcal{B}_\sigma:=
\bigcup_{\widetilde{\varepsilon}\in\widetilde{V}}\widetilde{\varepsilon}
\cdot\partial \overline{c}_\sigma.
\end{equation}
 Note that $\overline{c}_\sigma\subset\mathcal{B}$  when $w_\sigma=0$, for then
 $\overline{c}_\sigma$ coincides with its boundary $\partial \overline{c}_\sigma$.

 Let $\alpha:=\pi(y)\in T-\pi(\mathcal{B})$. By the remark immediately following Lemma \ref{degF},
 $F^{-1}(\alpha)\not=\varnothing$. Let
  $\delta\in F^{-1}(\alpha)\subset\widehat{T}$,
  and suppose $x\in [0,1]^{n-1}$ satisfies $\widehat{\pi}(x)=\delta$.
 Then
 $\alpha=F\big(\widehat{\pi}(x)\big)=\pi\big(f(x)\big)$. If
  we had $x\in\partial D_\sigma$ for some $\sigma\in S_{n-1}$,  then $f(x)\in
  f(\partial D_\sigma)\subset\partial\overline{c}_\sigma\subset \mathcal{B}$,
  contradicting $\alpha\notin\pi(\mathcal{B})$. Thus, $x\notin \partial D_\sigma$ for
  any $\sigma\in S_{n-1}$. Similarly, $x\notin D_\sigma$ for any $\sigma\in S_{n-1}$
  such that $w_\sigma=0$.  If  $w_\sigma\not=0$, the map $f=A_\sigma$  (see Proposition \ref{f})
  gives a bijection between the interior of $D_\sigma$ and the interior of $\overline{c}_\sigma$.
   It follows that  $f$ is a local homeomorphism in a neighborhood of $x$, as are
  $\widehat{\pi}$ and $\pi$ $\big($the latter in a neighborhood of $f(x)\big)$. Hence $F$ is a local homeomorphism
  in a neighborhood of $\delta$. Thus, $\delta=\widehat{\pi}(x)$  with $x$ in the interior of some $D_\sigma$, and
  $w_\sigma\not=0$.   Moreover, as $\widehat{\pi}$  restricted to $(0,1)^{n-1}$ is a bijection onto its image,
   there is a   unique point   $x\in\widehat{\pi}^{-1}(\delta)$. Also, $f(x)$ is in the interior of $\overline{c}_\sigma$,
   so $f(x)\in c_\sigma$.

   We now calculate using
   Lemma \ref{goodcasedegree},
  Proposition  \ref{DegreeProps}$\,$\eqref{CompactCase} and \eqref{Localglobal},
\begin{align*} 
 \deg(F) & = \deg_{\alpha}(F)=\sum_{\delta\in F^{-1}(\alpha)}\mathrm{locdeg}_\delta (F)=
 \sum_{\substack{ \sigma\in S_{n-1}\\ w_\sigma\not=0}}
        \  \sum_{\substack{ x\in D_\sigma \\ \widehat{\pi}(x)\in F^{-1}(\alpha)}}\!  \mathrm{locdeg}_{\widehat{\pi}(x)} (F)
           \\ &
    = \sum_{\substack{ \sigma\in S_{n-1}\\ w_\sigma\not=0}}
       \   \sum_{\substack{ x\in D_\sigma \\ F(\widehat{\pi}(x))=\alpha}}\!  v_\sigma
    = \sum_{\substack{ \sigma\in S_{n-1}\\ w_\sigma\not=0}}
        \  \sum_{\substack{ x\in D_\sigma \\ \pi(f(x))=\pi(y)}} \! v_\sigma
         = \sum_{\substack{ \sigma\in S_{n-1}\\ w_\sigma\not=0}}\
        \  \sum_{\substack{ x\in D_\sigma \\  f(x)\in\widetilde{V}\cdot y}}\!  v_\sigma
    \\ &            =
          \sum_{\substack{ \sigma\in S_{n-1}\\ w_\sigma\not=0}} \
          \sum_{  z\in c_\sigma\cap\widetilde{V}\cdot y }\!  v_\sigma   = \deg(F)
          \sum_{\substack{ \sigma\in S_{n-1}\\ w_\sigma\not=0}}\
          \sum_{  z\in c_\sigma\cap\widetilde{V}\cdot y }\!  w_\sigma ,
\end{align*}
since  $v_\sigma=\deg(F) w_\sigma $ by
 \eqref{wsigma},\,\eqref{degreeFlemma}    and  \eqref{vsigma}.
The main count \eqref{AIM}, for $y\in\R^{n-1}_+-\mathcal{B}$, follows   on dividing both sides   by $\deg(F)=\pm1$.

\subsection{End of proof of Theorem \ref{Main}}\label{BOUNDARY}
We now   address  $\widetilde{V}$-orbits which may intersect
 the  boundary $\partial \overline{c}_\sigma$ of some $\overline{c}_\sigma\subset\R^{n-1}_+$.
 For $y\in\R^{n-1}_+$ and $\sigma\in S_{n-1}$,
define  $J_\sigma(y)\subset\widetilde{V}$ as
  \begin{equation}\label{Jsigma}
J_\sigma(y):=\big\{\widetilde{\varepsilon}\in\widetilde{V}\big|\,\widetilde{\varepsilon}\cdot y\in c_\sigma\big\}.
 \end{equation}
As noted at the beginning of \S\ref{finalsection}, $J_\sigma(y)$
is a finite (possibly empty) set for any $y\in \R^{n-1}_+$. The point of defining $J_\sigma$ is that
 \begin{equation}\label{Jsigma2}
 \sum_{ z\in c_\sigma\cap\widetilde{V}\cdot y  }\!   w_\sigma
 = w_\sigma\, \mathrm{Card}\big(J_\sigma(y)\big)\qquad\qquad(y\in\R^{n-1}_+ ,\ \,\sigma\in S_{n-1}).
 \end{equation}
 Recall that we defined $\mathcal{B}_\sigma $ in \eqref{B} as the $\widetilde{V}$-orbit of the boundary of
$\overline{c}_\sigma$.
\begin{lemma}\label{Jpoints} For $y\in\R^{n-1}_+ $ and $\sigma\in S_{n-1}$,
there exists $T_\sigma(y)\in(0,1)$ such that $T_\sigma(y)\le t<1$
implies $J_\sigma(y)=J_\sigma(ty) $ and  $ty\notin\mathcal{B}_\sigma$.
\end{lemma}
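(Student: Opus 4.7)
The plan is to approach $y$ along the segment $\{ty:0<t<1\}$ and show that for $t$ sufficiently close to $1$, the set $J_\sigma(ty)$ stabilizes to $J_\sigma(y)$ while $ty$ avoids every $\widetilde{V}$-translate of $\partial\overline{c}_\sigma$. Two ingredients drive the argument: the lattice structure of $\LOG(\widetilde{V})$ supplied by Lemma \ref{REGS}, which limits the analysis to finitely many units at a time, and Lemma \ref{goodpoint}, which forbids the radial line through $0$ from lingering on any face-hyperplane $h_{i,\sigma}$.

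First I would fix a compact neighborhood $K$ of $\overline{c}_\sigma$ in $\R^{n-1}_+$. Since $\LOG(ty)=\LOG(y)+\log(t)\cdot 1_{n-1}$ stays in a bounded subset of $\R^{n-1}$ as $t$ ranges over $[1/2,1]$, and $\LOG(\widetilde{V})$ is a full lattice, only finitely many $\widetilde{\varepsilon}\in\widetilde{V}$ can satisfy $\widetilde{\varepsilon}\cdot ty\in K$ for some $t\in[1/2,1]$; call this finite set $S$. For $\widetilde{\varepsilon}\notin S$ the orbit point $\widetilde{\varepsilon}\cdot ty$ stays outside $K\supset\overline{c}_\sigma$ throughout $t\in[1/2,1]$, so such $\widetilde{\varepsilon}$ contribute nothing to $J_\sigma(y)$, to $J_\sigma(ty)$, or to an inclusion $ty\in\widetilde{\varepsilon}^{-1}\cdot\partial\overline{c}_\sigma$.

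For each of the finitely many $\widetilde{\varepsilon}\in S$ I set $z:=\widetilde{\varepsilon}\cdot y$, so that $\widetilde{\varepsilon}\cdot ty=tz$, and establish two claims as $t\to 1^-$. For (a) $tz\notin\partial\overline{c}_\sigma$: Lemma \ref{goodpoint} gives $0\notin h_{i,\sigma}$, so the line $\{tz:t\in\R\}$ through $0$ is not contained in the affine hyperplane $h_{i,\sigma}$ and meets it in at most one point; either $z\in h_{i,\sigma}$ (in which case $tz\notin h_{i,\sigma}$ for every $t\neq 1$) or $z\notin h_{i,\sigma}$ (in which case continuity gives $tz\notin h_{i,\sigma}$ for $t$ near $1$). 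Since $\partial\overline{c}_\sigma\subset\bigcup_{i=0}^{n-1}h_{i,\sigma}$, this yields (a). For (b) $z\in c_\sigma\Longleftrightarrow tz\in c_\sigma$ when $t$ is close to $1$, I invoke Lemma \ref{csigmareason}: if $z\in c_\sigma$ then $\overrightarrow{0,z}$ meets $\stackrel{\circ}{\overline{c}}_\sigma$ at some $s_0 z$, and the ``Moreover'' clause of Lemma \ref{piercing} places every $tz$ with $t\in[s_0,1)$ inside the open interior, hence in $c_\sigma$; if $z\notin c_\sigma$ then either $z\notin\overline{c}_\sigma$ (and continuity gives $tz\notin\overline{c}_\sigma$ near $t=1$), or $\overrightarrow{0,z}$ misses the interior entirely, in which case the sub-segment $\overrightarrow{0,tz}$ misses it too and $tz\notin c_\sigma$.

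Taking the minimum of the finitely many individual thresholds produced in (a) and (b) yields the required $T_\sigma(y)\in(0,1)$. The main obstacle is step (a): without Lemma \ref{goodpoint} the radial line could sit inside some $h_{i,\sigma}$ and never escape $\mathcal{B}_\sigma$ as $t\to 1^-$, and without the lattice reduction infinitely many $\widetilde{\varepsilon}$ would have to be controlled simultaneously. The two inputs shut off precisely these two failure modes.
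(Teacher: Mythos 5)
Your proof is correct and uses the same key ingredients as the paper's: Lemma \ref{csigmareason} to translate membership in $c_\sigma$ into piercing, the ``Moreover'' clause of Lemma \ref{piercing} to push interior points along the ray toward $1$, Lemma \ref{goodpoint} to prevent the ray from lingering in a face-hyperplane, and a finiteness reduction to a bounded set of units. The only difference is organizational: you reduce upfront to the finite set $S$ and argue directly with a minimum of thresholds, whereas the paper handles the reverse inclusion $J_\sigma(ty)\subset J_\sigma(y)$ and the condition $ty\notin\mathcal{B}_\sigma$ by contradiction, extracting a fixed $\widetilde{\varepsilon}$ from a sequence $t_j\to 1$; the two formulations are logically interchangeable.
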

\begin{proof} We first deal with the $J_\sigma$'s. Suppose $\widetilde{\varepsilon} \in J_\sigma(y)$,
so $\widetilde{\varepsilon}\cdot y\in c_\sigma$.  Lemma \ref{csigmareason} shows that $\overrightarrow{0,
\widetilde{\varepsilon}\cdot y} $ pierces $\overline{c}_\sigma$. By Definition \ref{defpiercing}, this means that
there is some $z=t_{\widetilde{\varepsilon}} (\widetilde{\varepsilon}\cdot y)$,
with $0\le t_{\widetilde{\varepsilon}}\le 1$, such that $z\in \
\stackrel{\circ}{\overline{c}}_\sigma$, \ie $z$ lies in the interior of
$\overline{c}_\sigma$. We cannot have
 $t_{\widetilde{\varepsilon}}=0$ as $\overline{c}_\sigma\subset\R^{n-1}_+$ lies
  in the strictly positive orthant. If $t_{\widetilde{\varepsilon}}=1$, then
  $\widetilde{\varepsilon}\cdot y$ itself is interior to $\overline{c}_\sigma$, so we
may reduce $t_{\widetilde{\varepsilon}}$ so that $0<t_{\widetilde{\varepsilon}}<1$. As
 $z=t_{\widetilde{\varepsilon}} (\widetilde{\varepsilon}\cdot y)
 \in\ \stackrel{\circ}{\overline{c}}_\sigma$,
 the last claim in Lemma \ref{piercing} shows that
  $t( \widetilde{\varepsilon}\cdot y)\in \
  \stackrel{\circ}{\overline{c}}_\sigma\subset c_\sigma$ for
     $t_{\widetilde{\varepsilon}}\le t<1$. As $t(\widetilde{\varepsilon}\cdot y)
= \widetilde{\varepsilon}\cdot ty$, we have shown $J_\sigma(y)\subset J_\sigma(ty)$ for $T_0\le t<1$,
where $T_0 :=\max_{\widetilde{\varepsilon}\in J_\sigma(y)}\big\{t_{\widetilde{\varepsilon}} \}<1$.

We now prove that   $J_\sigma(ty)\subset  J_\sigma(y)$ for all $t<1$ sufficiently close to 1.
Assume this is  false. Then there is a sequence $\{t_j\}_j$, with $0<t_j<1$, converging  to 1
with $J_\sigma(t_j y)\not\subset  J_\sigma(y)$, \ie for each $j$ there
is some $\widetilde{\varepsilon}_j\in \widetilde{V}$ such that $\widetilde{\varepsilon}_j\cdot t_jy\in c_\sigma$, but
$\widetilde{\varepsilon}_j\cdot y\notin c_\sigma$. Since all but a
finite number of $\widetilde{\varepsilon}\in \widetilde{V}$
take a small neighborhood of $y$ to the complement of $\overline{c}_\sigma$,
 the  $\widetilde{\varepsilon}_j$ range over a finite subset of $ \widetilde{V}$. By passing to a
 subsequence of the $t_j$'s (which we again denote by $t_j$),
  we can assume  $\widetilde{\varepsilon}_j =  \widetilde{\varepsilon}$,
 a fixed element of   $\widetilde{V}$ with
$\widetilde{\varepsilon}\notin J_\sigma(y)$. By  Lemma \ref{csigmareason},
  $\overrightarrow{0,\widetilde{\varepsilon}\cdot t_jy} $ pierces $\overline{c}_\sigma$.
In particular, $\widetilde{\varepsilon}\cdot t_j y\in\overline{c}_\sigma$. Since  $\overline{c}_\sigma$
is closed and $t_j\to1$, we see that
 $\widetilde{\varepsilon}\cdot  y\in\overline{c}_\sigma$. But
  $\overrightarrow{0,\widetilde{\varepsilon}\cdot t_jy} $ intersects $\stackrel{\circ}{\overline{c}}_\sigma$, as
 it pierces $\overline{c}_\sigma$. Now,  $\overrightarrow{0,\widetilde{\varepsilon}\cdot y} $ contains
 $\overrightarrow{0,\widetilde{\varepsilon}\cdot t_jy} $, so it also  pierces
 $\overline{c}_\sigma$. But   Lemma  \ref{csigmareason} implies that $\widetilde{\varepsilon}\cdot y\in c_\sigma$,
 contradicting $\widetilde{\varepsilon}\notin J_\sigma(y)$. Hence $J_\sigma(y)=J_\sigma(ty)$ for all $t<1$ near enough
 to 1, as claimed.

 We now prove the last claim in the lemma, namely that $ty\notin \mathcal{B}_\sigma$ for all $t$
  sufficiently close to 1.
 If this is false,  there is again a sequence
 $\{t_j\}_j$, with $0<t_j<1$, converging to 1 such that $t_jy\in \mathcal{B}_\sigma$, \ie  for each $j$ there
is some $\widetilde{\varepsilon}_j\in \widetilde{V}$ such that
$\widetilde{\varepsilon}_j\cdot t_j y\in \partial \overline{c}_\sigma$.
Passing to a subsequence, we can  assume that
$\widetilde{\varepsilon}\cdot t_j y\in \partial \overline{c}_\sigma$ for
 some $\widetilde{\varepsilon}\in \widetilde{V}$. But
the boundary $ \partial \overline{c}_\sigma$ lies in the union of the affine subspaces
 $h_{i,\sigma}$ $\big($see \eqref{defhisigma}$\big)$ extending the faces
of $\overline{c}_\sigma\ \,(0\le i\le n-1)$. Passing again to a subsequence, we can assume that
$\widetilde{\varepsilon}\cdot t_j y\in   h_{i_0,\sigma}$, for a fixed $i_0$. Since $h_{i_0,\sigma}$ is an affine
subspace, and it contains more than one point on the straight line connecting 0 and $\widetilde{\varepsilon}\cdot y$, it must contain the entire line. In
particular, $0\in  h_{i_0,\sigma}$, contradicting Lemma \ref{goodpoint}.
\end{proof}

We   now  conclude the proof of the main count \eqref{AIM} for any $y\in\R^{n-1}_+$. The above lemma
shows the existence of $y_0=y_0(y)\in \R^{n-1}_+$
 such that $J_\sigma(y_0) = J_\sigma(y)$ and $y_0\notin \mathcal{B}_\sigma$ for all $\sigma\in S_{n-1}$.
 Thus $y_0\notin\mathcal{B}:=\bigcup_\sigma \mathcal{B}_\sigma$. In particular, from the previous subsection,
 we know that \eqref{AIM} holds for $y_0$.
 Hence, using \eqref{Jsigma2},
\begin{align*}
 1   &=   \sum_{\substack{\sigma\in S_{n-1}\\ w_\sigma\not=0}}
             \ \sum_{z\in c_\sigma\cap\widetilde{V}\cdot y_0} w_\sigma =
             \sum_{\substack{\sigma\in S_{n-1}\\ w_\sigma\not=0}} w_\sigma\  \mathrm{Card}\big(J_\sigma(y_0)\big)
         \\ &    = \sum_{\substack{\sigma\in S_{n-1}\\ w_\sigma\not=0}} \ w_\sigma\, \mathrm{Card}\big(J_\sigma(y)\big)
         = \sum_{\substack{\sigma\in S_{n-1}\\ w_\sigma\not=0}}
             \,\sum_{z\in c_\sigma\cap\widetilde{V}\cdot y} w_\sigma. \qed
\end{align*}


\begin{thebibliography}{XXX}



\bibitem[Co1]{Co1}  P. Colmez, {\it{R\'esidu en $s = 1$ des
fonctions z\^eta $p$-adiques}},
 Invent.\ Math.\  {\bf{91}} (1988),   371--389.


\bibitem[Co2]{Co2}  P. Colmez, {\it{Alg\'ebricit\'e
 des valeurs sp\'eciales de fonctions $L$}},
 Invent.\ Math.\  {\bf{95}} (1989),   161--205.


\bibitem[DF]{DF}  F. Diaz y Diaz and E. Friedman, {\it{Colmez cones for
 fundamental units of totally real cubic fields}}, J. Number Th.\ {\bf 132}    (2012), 1653--1663.


\bibitem[Dol]{Dol}  A. Dold, {\emph{Lectures on algebraic topology}},
Grundlehren der mathematischen Wissenschaften {\bf{200}},
 Berlin: Springer-Verlag (1972).



\bibitem[FR]{FR}
E. Friedman and S. Ruijsenaars, {\it Shintani-Barnes zeta and gamma
functions},   Advances Math.\  {\bf 187} (2004),  362--395.



\bibitem[Gre]{Gre}  M. Greenberg, {\emph{Lectures on algebraic topology}}, $4^{\text{th}}$ corrected printing
of the 1967 edition, Reading, Mass.: Benjamin (1976).



\bibitem[HP]{HP}  U. Halbritter and M. Pohst,
 {\it{On lattice bases
 with special properties}},  J. Th\'eor.\
 Nombres Bordeaux  {\bf{12}} (2000),   437--453.




\bibitem[Neu]{Neu}  J. Neukirch, {\emph{Algebraic number theory}},
Grundlehren der mathematischen Wissenschaften {\bf{322}},
 Berlin: Springer-Verlag (1999).


\bibitem[OR]{OR}  E. Outerelo and J.$\,$M.\ Ruiz, {\emph{Mapping Degree Theory}},
Graduate Studies in Mathematics  {\bf{108}},
 Providence, R.I., Amer.\ Math.\ Soc.\ (2009).



\bibitem[Sh1]{Sh1} T. Shintani, {\it{On evaluation of
zeta functions of totally real algebraic
number fields at  non-positive integers}}, J. Fac.\ Sci.\ Univ.\
Tokyo, Sec.\ IA
 {\bf{23}} (1976), 393--417.



\bibitem[Sh2]{Sh2}  T. Shintani, {\it{On a Kronecker limit formula for real
quadratic fields}}, J. Fac.\ Sci.\ Univ.\ Tokyo, Sec.\ IA
 {\bf{24}} (1977), 167--199.


\bibitem[TV]{TV}  E. Thomas and A. Vasquez, {\it{On
the resolution of cusp singularities
    and the Shintani decomposition in totally real cubic number fields}},
     Math.\ Ann.\ {\bf{247}} (1980),   1--20.



\end{thebibliography}
  \end{document}